\documentclass{amsart}
\usepackage{graphicx}
\usepackage[dvips,final]{epsfig}
\usepackage{amsmath}

\usepackage{amsfonts}

\usepackage{amssymb}

\usepackage{latexsym}


\vfuzz2pt 

\hfuzz2pt 


\newtheorem{thm}{Theorem}[section]
\newtheorem{cor}[thm]{Corollary}
\newtheorem{lem}[thm]{Lemma}
\newtheorem{prop}[thm]{Proposition}

\theoremstyle{definition}

\newtheorem{defn}[thm]{Definition}
\newtheorem{ex}[thm]{Example}
\newtheorem{remark}[thm]{Remark}
\newtheorem{rem}[thm]{Remark}

\numberwithin{equation}{section}


\newcommand{\To}{\longrightarrow}

\newcommand{\bC}{\mathbb C}

\newcommand{\bK}{\mathbb K}

\newcommand{\bR}{\mathbb R}

\newcommand{\bZ}{\mathbb Z}

\newcommand{\de}{\delta}

\newcommand{\g}{\gamma}

\newcommand{\al}{\alpha}

\newcommand{\be}{\beta}

\newcommand{\om}{\omega}

\newcommand{\cA}{\mathcal A}

\newcommand{\cC}{\mathcal C}

\newcommand{\cD}{\mathcal D}

\newcommand{\cE}{\mathcal E}

\newcommand{\cI}{\mathcal I}
\newcommand{\cJ}{\mathcal J}

\newcommand{\cK}{\mathcal K}

\newcommand{\cL}{\mathcal L}

\newcommand{\cM}{\mathcal M}

\newcommand{\cO}{\mathcal O}

\newcommand{\cR}{\mathcal R}

\newcommand{\Id}{\rm Id}

\newcommand{\cod}{{\rm cod}}

\newcommand{\Div}{{\rm div}}
\newcommand{\trace}{{\rm trace}}

\newcommand{\Diff}{{\rm Diff}}

\newcommand{\Derlog}{{\rm Derlog}}

\newcommand{\im}{{\rm im}}

\newcommand{\VqG}{{G}_{\Omega _q}}

\newcommand{\VnG}{{G}_{\Omega _n}}
\newcommand{\VpG}{{G}_{\Omega _p}}

\newcommand{\cVnA}{{\mathcal A}_{\Omega _n}}
\newcommand{\cVpA}{{\mathcal A}_{\Omega _p}}

\newcommand{\cVnK}{{\mathcal K}_{\Omega _n}}
\newcommand{\cVpK}{{\mathcal K}_{\Omega _p}}

\newcommand{\cVnR}{{\mathcal R}_{\Omega _n}}

\newcommand{\cVpL}{{\mathcal L}_{\Omega _p}}

\newcommand{\cVpC}{{\mathcal C}_{\Omega _p}}

\newcommand{\bx}{{\bf x}}

\begin{document}

\title[Volume preserving subgroups of $\cA$ and $\cK$]
{Volume preserving subgroups of $\cA$ and $\cK$ and singularities
in unimodular geometry}

\author{W. Domitrz}
\address{Faculty of Mathematics and Information Science, Warsaw
University of Technology, Pl. Politechniki 1, 00-661 Warszawa,
Poland} \email{domitrz@alpha.mini.pw.edu.pl}

\author{J. H. Rieger}
\address{Institut f{\"u}r Mathematik, Universit{\"a}t
Halle, D-06099 Halle (Saale), Germany }
\email{rieger@mathematik.uni-halle.de}

\subjclass{32S05, 32S30, 58K40}

\keywords{Singularities of mappings, Unimodular geometry,
Volume preserving diffeomorphisms}


\maketitle

\begin{abstract}
For a germ of a smooth map $f$ from $\bK ^n$ to $\bK ^p$ and a
subgroup $\VqG$ of any of the Mather groups $G$ for which the
source or target diffeomorphisms preserve some given volume form
$\Omega _q$ in $\bK ^q$ ($q=n$ or $p$) we study the $\VqG$-moduli
space of $f$ that parameterizes the $\VqG$-orbits inside the
$G$-orbit of $f$. We find, for example, that this moduli space
vanishes for $\VqG =\cVpA$ and $\cA$-stable maps $f$ and for $\VqG
=\cVnK$ and $\cK$-simple maps $f$. On the other hand, there are
$\cA$-stable maps $f$ with infinite-dimensional $\cVnA$-moduli
space.
\end{abstract}

\section*{Introduction}\label{intro}

We are going to study singularities arising in unimodular
geometry. A singular subvariety of a space with a fixed volume
form may be given by some parametrization or by defining
equations. This leads to the following (multi-)local
classification problems. (1) The classification of germs of smooth
maps $f:(\bK ^n,0)\to (\bK ^p,\Omega _p,0)$ ($\bK =\bC$ or $\bR$)
up to $\cVpA$-equivalence (i.e., for the subgroup of $\cA$ in
which the left coordinate changes preserve a given volume form
$\Omega _p$ in the target), and also of multi-germs of such maps
up to $\cVpA$-equivalence. (2) The classification of variety-germs
$V=f^{-1}(0)\subset (\bK ^n,\Omega _n,0)$ up to
$\cVnK$-equivalence of $f:(\bK ^n,\Omega _n,0)\to (\bK ^p,0)$
(i.e., for the subgroup of $\cK$ in which the right coordinate
changes preserve a given volume form $\Omega _n$ in the source).
More generally, we will consider volume preserving subgroups
$\VqG$ of any of the Mather groups $G=\cA$, $\cK$, $\cL$, $\cR$
and $\cC$ preserving a (germ of a) volume form $\Omega _q$ in the
source (for $q=n$) or target (for $q=p$). (See the survey \cite{Wa}
for a discussion of the groups $G$ and their tangent spaces $LG$,
or see the beginning of \S \ref{moduli} below for a brief reminder.)

These subgroups $\VqG$ of $G$ fail to be geometric subgroups of
$\cA$ and $\cK$ in the sense of Damon \cite{Da,Da2}, hence the
usual determinacy and unfolding theorems do not hold for $\VqG$.
In this situation moduli and even functional moduli often appear
already in codimension zero, and e.g. for $\cVnR$ this is indeed
the case: a Morse function has a functional modulus (and hence
infinite modality) in the volume preserving case \cite{V}. Hence
it might appear surprising that Martinet wrote 30 years ago in his
book (see p. 50 of the English translation \cite{Mar2}) on the
$\cVpA$ classification problem in unimodular geometry that the
groups involved ``are big enough that there is still some hope of
finding a reasonable classification theorem''. It turns out that
Martinet was right -- the results of this paper imply, for
example, that over $\bC$ the classifications of stable map-germs
for $\cVpA$ and for $\cA$ agree, and hence Mather's \cite{MaVI}
nice pairs of dimensions $(n,p)$. Furthermore, the classifications
of simple complete intersection singularities agree for $\cVnK$
and for $\cK$. Over $\bR$ a $G$-orbit ($G=\cA$ or $\cK$)
corresponds to one or two orbits in the volume preserving (hence
orientation preserving) case, otherwise the results are the same.

We will now summarize our main results. For any of the above Mather
groups $G$, let $G_f$ denote the stabilizer of a map-germ $f$ in
$G$ and let $G_e$, as usual, denote the extended pseudo group
of non-origin preserving diffeomorphisms. The differential
of the orbit map of $f$ (sending $g\in G$ to $g\cdot f$)
defines a map $\g _f: LG\to LG\cdot f$ with kernel $LG_f$.
Let $LG^q_f$ be the projection of $LG_f$ onto the source
(for $q=n$) or the target factor (for $q=p$).
Notice that, for example, the group $G=\cR$ can be viewed
as a subgroup $\cR\times 1$ of $\cA$ with Lie algebra
$L\cR\oplus 0$ -- allowing such trivial factors $1$ enables us
to define the projections $LG^q_f$ for all Mather groups $G$, which
will be convenient for the uniformness of the exposition.
For a given volume
form $\Omega _q$ in $(\bK ^q,0)$ we have a map
$\Div :\cM _q\cdot\theta _q\to C_r$ sending a vector field (vanishing
at 0) to its divergence, where $r=q$ for all $\VqG$ except
$\cVpK$ (we use here the following standard notation: $C_q$ denotes
the local ring of smooth function germs on $(\bK ^q,0)$ with maximal
ideal $\cM _q$, and $\theta _q$ denotes the $C_q$ module of vector
fields on $(\bK ^q,0)$). For $\cVpK$ we consider linear vector
fields in $(\bK ^p,0)$ with coefficients in $C_n$, the divergence
of such a vector field is an element of $C_n$.
We will show that for the (infinitesimal)
$\VqG$ moduli space $\cM (\VqG ,f)$ we have the following
isomorphism
\[
\cM (\VqG ,f):=\frac{LG\cdot f}{L\VqG\cdot f}
\cong
\frac{C_r}{\Div (LG_f^q)}.
\]
For $\cVnK$ the vector space on the right is in turn isomorphic
to the $n$th cohomology group of a certain subcomplex of the
de Rham complex associated with any finitely generated ideal $\cI$ in
$C_n$ (defined in Section \ref{cohom}), taking
$\cI =\langle f_1,\ldots ,f_p\rangle$ (the ideal generated
by the component functions $f_i$ of $f$).
For $\cVpA$ we obtain an analogous isomorphism by
taking the vanishing ideal $\cI$ of the discriminant (for $n\ge p$)
or the image (for $n<p$) of $f$, provided $L\cA _f^p$ (also known
as ${\rm Lift}(f)$) is equal to $\Derlog$ of the discriminant or image
of $f$.

Furthermore, if $LG$ has the structure of a $C_r$-module (this is
the case for all $\VqG$ except $\cVnA$) then $\dim \cM (\VqG ,f)$
is equal to the number of $\VqG$ moduli of $f$ (for $\cVnA$ this
equality becomes a lower bound). This will be shown in the
following way. The notion of $\VqG$-equivalence of maps $f$ and
$g$ (for a given volume form $\Omega _q$) is easily seen to be
equivalent to the following notion of $G_f^q$-equivalence of
volume forms $\Omega _q$ and $\Omega '_q$ (for a given map $f$):
$\Omega '_q\sim _{G_f^q}\Omega _q$ if and only if for some $h\in
G_f^q$ we have that $h^{\ast}\Omega '_q=\Omega _q$. It then turns
out that a pair $\Omega _q$ and $\Omega '_q$ (that in the case of
$\bR$ defines the same orientation) can be joined by a path of
$G_f^q$ equivalent volume forms if and only if $\Omega '_q-\Omega
_q=d(\xi \rfloor\Omega )$ for some $\xi\in LG_f^q$ and any volume
form $\Omega$ in $(\bK ^q,0)$. And the number of $G_f^q$ moduli of
volume forms (and hence of $\VqG$ moduli of $f$) is given by the
dimension of the space ${\Lambda^q}/\{ d(\xi \rfloor\Omega
):\xi\in LG_f^q\}$ (here $\Lambda^q$ denotes the space of
$q$-forms in $(\bK ^q,0)$), which turns out to be equal to $\dim
C_q/\Div (LG_f^q)$.

If, furthermore, $\cM (\VqG ,f)=0$ then, over $\bC$,
we have at the formal level (and also in the smooth
category, provided the sufficient vanishing condition w.q.h.
for $\cM (\VqG ,f)$ below holds)
\[
\VqG \cdot f=G\cdot f
\]
Over $\bR$, the orbit $G\cdot f$ consists of one or two
$\VqG$-orbits, due to orientation as mentioned above.
More precisely, if $G^+$ denotes the subgroup of $G$ for
which the elements of the $q$-factor of $G$ are
orientation-preserving then $\VqG \cdot f=G^+\cdot f$.

For the most interesting groups $\VqG$ we have the following
sufficient conditions for the vanishing of $\cM (\VqG ,f)$,
namely certain weak forms of quasihomogeneity. We call $f$
{\em weakly quasihomogeneous for} $\VqG$ if $f$ is q.h. for
weights $w_i\in\bZ$ and weighted degrees $\de _j$ such that
the following conditions hold.
\begin{itemize}
\item For $\VqG =\cVpA$: all $\de _j\ge 0$ and $\sum _j \de _j>0$.
\item For $\VqG =\cVnK$: all $w _i\ge 0$ and $\sum _i w_i>0$.
\item For $\VqG =\cVpK$: $\sum _j \de _j\neq 0$.
\end{itemize}
Notice that any $f$ with some zero component function (up
to the relevant $G$-equivalence) is w.q.h. for $\cVpA$ and
$\cVpK$ (and also for $\cVpL$ and $\cVpC$), and any $f$ such that
$df(0)$ has positive rank is w.q.h. for $\cVnK$ and $\cVpK$.
These ``trivial forms of weak quasihomogeneity'' correspond to
the fact that diffeomorphisms of a proper submanifold in
$(\bK ^q,0)$ can be extended to volume preserving diffeomorphisms
of $(\bK ^q,\Omega _q,0)$. Furthermore, if $f$ is
$\VqG$-w.q.h. then the statement about equality of $G$-
and $\VqG$-orbits over $\bC$ (and the corresponding one
over $\bR$) in the previous paragraph holds in the smooth
category (where smooth means complex-analytic over $\bC$ and
$C^{\infty}$ or real-analytic over $\bR$, as usual).
For a $\VqG$-w.q.h. map $f$ the above (generalized) weights
and weighted degrees yield a generalized Euler vector field
in $(\bK ^q,0)$ ($q=n$ or $p$) that allows us to integrate
the (a priori formally defined) vector fields at the infinitesimal
level to give the required smooth diffeomorphisms.

For $f$ not $\VqG$-w.q.h. we are interested in upper and
lower bounds for the dimension of $\cM (\VqG ,f)$ and in
the question whether the $G$-finiteness of $f$ implies
the finiteness of $\cM (\VqG ,f)$. We have several results
in this direction.

(1) For any $\VqG$ for which there is a version of weak
quasihomogeneity we have the following easy upper bound (in the
formal category) for $G$-semiquasihomogeneous (s.q.h.) maps
$f=f_0+h$, where $f_0$ q.h. (and hence $\VqG$-w.q.h.) and
$G$-finite and $h$ has positive degree (relative to the weights of
$f_0$). The normal space $NG\cdot f_0:=\cM _n\cdot\theta
_{f_0}/LG\cdot f_0$ (where $\theta _{f_0}$ denotes the
$C_n$-module of sections of $f_0^{\ast}T\bK ^p$) decomposes into a
part of non-positive filtration and a part of positive filtration,
denoted by $(NG\cdot f_0)_+$. Denoting the number of $G$-moduli of
positive filtration of $f$ by $m(G,f)$ we have the inequality
\[
\dim\cM (\VqG ,f)+m(G,f)\le \dim (NG\cdot f_0)_+.
\]
(Note that the same inequality holds for the
extended pseudo-groups $G_e$, $G_{\Omega _{q},e}$.)
For $\VqG =\cVpA$
all our examples support the following conjecture: for $f$ as above,
the upper bound is actually an equality.
For $\cA$-s.q.h. map-germs
$f:(\bK ^n,0)\to (\bK ^p,\Omega _p,0)$ with $n\ge p-1$ and
$(n,p)$ in the nice range of dimensions or of corank one
(outside the nice range) the validity of this conjecture
would have an interesting consequence. Following Damon and Mond \cite{DM}
we denote by $\mu _{\Delta}(f)$ the discriminant (for $n\ge p$)
or image (for $p=n+1$) Milnor number of $f$ (the discriminants
and images $\Delta (f)$ in these dimensions are hypersurfaces in the target,
and $\Delta (f_t)$ of a stable perturbation $f_t$ of $f$ has
the homotopy type of a wedge of $\mu _{\Delta}(f)$ spheres).
For a q.h. map-germ $f_0$ we have $\cod (\cA _e,f_0)=\mu _{\Delta}(f_0)$
for $n\ge p$ by the main result in \cite{DM} and for
$p=n+1$ by Mond's conjecture (see Conjecture I in \cite{CMWA},
for $n=1,2$ this conjecture has been proved by Mond and others).
Now if our conjecture is true we obtain for s.q.h. maps
$f=f_0+h$ the following interesting consequence of these results:
\[
\cod (\cA _{\Omega _p,e},f)=\mu _{\Delta}(f).
\]
For $(n,p)=(1,2)$ the invariant $\mu _{\Delta}(f)$
is just the classical $\delta$-invariant, hence we recover
the formula $\cod (\cA _{\Omega _p,e},f)=\delta (f)$ of Ishikawa
and Janeczko \cite{IJ} in the special case of s.q.h. curves
(their formula holds for any $\cA$-finite curve-germ).
Notice that for $f=f_0+h$ we have $\mu _{\Delta}(f)=\mu _{\Delta}(f_0)$
(because any deformation by terms of positive filtration is
topologically trivial). Our conjecture implies that the coefficients
of each of the $\dim (N\cA _e\cdot f_0)_+$ terms of $h$ are moduli
for $\cA _{\Omega _p,e}$ (some of them may be moduli for $\cA _e$ too),
hence $\cod (\cA _{\Omega _p,e},f)=\cod (\cA _e,f_0)=\mu _{\Delta}(f_0)$,
which gives the formula above.

(2) For $\VqG =\cVnK$ we have more general results (in the analytic
category) which, for example, imply the following. For any
$\cK$-finite map $f$ the moduli space $\cM (\cVnK ,f)$ is
finite dimensional. Furthermore, if $f^{-1}(0)$ lies
in a hypersurface $h^{-1}(0)$ having (at worst) an isolated singular
point at the origin then $\dim \cM (\cVnK ,f)\leq \mu (h)$
(notice that if $f=(g_1,\ldots ,g_p)$ defines an ICIS
then we can take a generic $\bC$-linear combination
$h=\sum _i a_ig_i$ having finite Milnor number $\mu (h)$).

(3) For $\VqG =\cVpA$ the moduli space $\cM (\cVpA ,f)$ is
finite dimensional for maps $f$ whose image (or discriminant)
has (at worst) an isolated singularity at the origin.
This applies to $\cA$-finite maps $f:(\bC ^n,0)\to (\bC ^p,0)$
with $p\ge 2n$ or $p=2$ (and any $n$). For the other
pairs of dimensions $(n,p)$ we only have the finiteness
results for $\cA$-s.q.h. maps (see (1) above).

(4) For $\VqG =\cVpA$ and $\cVnK$ we have the following
criterion for $\dim\cM (\VqG ,f)\ge 1$:
suppose $f_0$ is q.h. and the restriction of
$\gamma _{f_0}:LG\to LG\cdot f_0$ to the filtration-0 parts
of the modules in source and target has 1-dimensional kernel,
then the parameter $u$ of a deformation $f=f_0+u\cdot M$
by some non-zero element $M\in (NG\cdot f_0)_+$ is a
modulus for $\VqG$. Using this criterion in combination
with the existing $\cA$- and $\cK$-classifications in the
literature we conclude the following. Suppose
$f:(\bC ^n,0)\to (\bC ^p,0)$ is $\cA$-simple and
$n\ge p$ or $p=2n$ or $(n,p)=(2,3), (1,p)$ (and any corank)
or $(n,p)=(3,4)$ and corank 1 then: $f$ is w.q.h. if and
only if $\dim\cM (\cVpA ,f)=0$. Or suppose
that $f$ has $\cK$-modality at most one, $rank(df(0))=0$
and $n\ge p$ then: $f$ is q.h. if and only if
$\dim\cM (\cVnK ,f)=0$.

The contents of the remaining sections of this papers are
as follows.

\S 1. {\em Brief summary of earlier related works}: by
considering the moduli spaces $\cM (\VqG ,f)$ parameterizing the
$\VqG$-orbits inside $G\cdot f$ one can relate the seemingly
unrelated earlier works on volume-preserving diffeomorphisms
in singularity theory.

\S 2. {\em $H$-isotopic volume forms}: for a subgroup $H$ of the group
of diffeomorphisms Theorem \ref{2-sides} gives a criterion
for a  pair of volume forms to be $H$-isotopic, and Proposition
\ref{prEuler} gives a sufficient condition on $LH$ under which all
pairs of volume forms are $H$-isotopic. The results will be
applied to the subgroups $H=G_f^q$ defined above.

\S 3. {\em The moduli space $\cM (\VqG ,f)$}: the space
parameterizing the $\VqG$-orbits in a given $G$-orbit is
isomorphic to $C_r/\Div (LG_f^q)$ (Theorem \ref{thm-moduli})
and it vanishes for $\VqG$-w.q.h. maps $f$ (Proposition \ref{inf-wqh}).
These results imply, for example, that (over $\bC$) the stable
orbits for $\cVpA$ and $\cA$ and the simple orbits for
$\cVnK$ and $\cK$ agree (see Remark \ref{triv-applic}).

\S 4. {\em A cohomological description of $\cM (\VqG ,f)$ and some finiteness
results}: for finitely generated ideals $\cI$ in $C_n$ we define
a subcomplex $(\Lambda ^\ast (\cI ),d)$ of the de Rham complex
whose $n$th cohomology vanishes for w.q.h. ideals $\cI$ (Theorem
\ref{I-orb-wqh}). For $\cI =f^\ast\cM _p$ (not necessarily w.q.h.)
$H^n(\Lambda ^\ast (\cI ))$ is isomorphic to $\cM (\cVnK ,f)$
and is finite if $\cI$ contains the vanishing ideal of a
variety $W$ with (at worst) an isolated singular point at 0,
see Theorem \ref{finite1} (for a hypersurface germ $W$ we have
$H^n(\Lambda ^\ast (\cI ))\leq \mu (W)$, see Theorem \ref{finite2}).
These finiteness results imply for example: $\cM (\cVnK ,f)$
is finite if $f$ defines an ICIS, and $\cM (\cVpA ,f)$ is
finite for $p\ge 2n$ and $\cA$-finite $f$.

\S 5. {\em The foliation of $\cA$-orbits by $\cVpA$-orbits}:
in those dimensions $(n,p)$, for which the classification of
$\cA$-simple orbits is known, an $\cA$-simple germ $f$ is w.q.h. if
and only if $\cM (\cVpA ,f)=0$. The classifications of
the $\cVpA$-simple orbits in dimensions $(n,2)$ and $(n,2n)$,
$n\ge 2$, are described in Propositions
\ref{class(n,2)}, \ref{class(2,4)} and \ref{class(n,2n)}.
In \S\ref{sqh} the foliation of s.q.h. but not w.q.h. $\cA$-orbits
by $\cVpA$-orbits is investigated for $\cA$-unimodal germs
into the plane, and in \S\ref{multigerms} weak quasihomogeneity
is defined for multigerms under $\cVpA$-equivalence.

\S 6. {\em The foliation of $\cK$-orbits by $\cVnK$- and
$\cVpK$-orbits}: a $\cK$-unimodal germ $f$ of rank 0 is q.h.
if and only if $\cM (\cVnK ,f)=0$, and $\cM (\cVnK ,f)=0$ implies
$\cM (\cVpK ,f)=0$ (recall that germs $f$ of positive
rank are trivially w.q.h., hence their $\cK$-, $\cVnK$- and
$\cVpK$-orbits coincide).
Examples of rank 0 germs $f$ defining
an ICIS of codimension greater than one are presented for
which $\dim \cM (\cVnK ,f)<\mu (f)-\tau (f)$. For hypersurfaces
we have $\dim \cM (\cVnK ,f)=\mu (f)-\tau (f)$ (by a result
of Varchenko \cite{Va1}), in all our higher codimensional
examples we have $\dim \cM (\cVnK ,f)\leq \mu (f)-\tau (f)$
(and for s.q.h. germs $f$ it is easy to see that this inequality
holds in general).

\S 7. {\em The groups $\VqG \neq \cVpA$, $\cVnK$, $\cVpK$}:
in the final section we consider the remaining groups $\VqG$
for which there are $G$-finite singular maps (as opposed to
functions). Examples indicate that already
$G$-stable, singular and not trivially w.q.h. maps $f$ have
positive modality for these groups $\VqG$ (for $\cVnA$ the
fold map even has infinite modality).

\section{Brief summary of earlier related works}
\label{lit}

Having defined the moduli space $\cM (\VqG ,f)$ we can now
conveniently describe the known results within this framework.
Most of these results are on functions (hypersurface
singularities), and (as explained above) one can either fix $f$
and classify volume forms in the presence of a hypersurface
defined by $f$ (up to $G_f^q=\cR _f^n$, $\cA _f^n$ or $\cK
_f^n$-equivalence) or fix a volume form and classify functions up
to $\VqG =\cVnR$, $\cVnA$ or $\cVnK$-equivalence. Much less is
known for maps (see \S \ref{maps}).

\subsection{Results on functions (hypersurface singularities)}
\label{hypersurf}

First, consider $\cVnR$-equivalence for functions $f:(\bK
^n,\Omega _n,0)\to \bK$, $n\ge 2$. The isochore Morse-Lemma from
the late 1970s by Vey \cite{V} and Colin de Verdi\`ere and Vey
\cite{CdVV} gives a normal form for an $A_1$ singularity involving
a functional modulus. More recently isochore versal deformations
were studied in \cite{CdV} and \cite{Garay}. The following result
by Francoise \cite{JPF1,JPF2} generalizes the isochore
Morse-Lemma: let $b_1=1,b_2,\ldots ,b_{\mu (f)}$ be a base for
$N\cR _e\cdot f$ then
\[
\cM (\cVnR ,f)\cong \bK\{ (h_i\circ f)b_i :h_i\in C_1,
i=1,\ldots ,\mu (f) \}.
\]
Hence $f$ has precisely $\mu (f)$ functional moduli (the $h_i$ are
arbitrary smooth function-germs in one variable).

Second, for $\cVnA$ it is clear that (keeping the above
notation) $(h_1\circ f)1\in L\cL _e\cdot f$, hence
\[
\cM (\cVnA ,f)\cong \bK\{ (h_i\circ f)b_i :h_i\in C_1,
i=2,\ldots ,\mu (f) \}.
\]
This moduli space vanishes for an $A_1$ singularity,
and non-Morse functions $f$ have $\mu (f)-1$ functional moduli.

Finally, for $\cVnK$ the situation is much better.
The following generalization of the corresponding $\cK _f^n$
classification of volume forms has been studied, for
example, by Arnol'd \cite{AVolume}, Lando \cite{L1,L2},
Kostov and Lando \cite{K-L} and Varchenko \cite{Va1}:
given a hypersurface $f^{-1}(0)$ and a non-vanishing
function-germ $h$, classify $n$-forms
of the type $f^ahdx_1\wedge\ldots\wedge dx_n$ up to
diffeomorphisms that preserve $f^{-1}(0)$. For $a=0$
we have the special case of volume forms, and in this
case the result of Varchenko gives
\[
\cM (\cVnK ,f)\cong\langle f,\nabla f\rangle/\langle\nabla f\rangle ,
\]
which has dimension $\mu (f)-\tau (f)$. Both Francoise and
Varchenko made extensive use of results of Brieskorn \cite{B},
Sebastiani \cite{Se} and Malgrange \cite{Mal}
on the de Rham complex of differential
forms on a hypersurface with isolated singularities.

We will see that this dimension formula for $\cM (\cVnK ,f)$
does, in general, not hold for map-germs $f$ defining an ICIS
of codimension greater than one. The obvious counter-examples
are weakly quasihomogeneous maps $f$ that are not quasihomogeneous:
for such $f$ the dimension of the moduli space is zero,
but $\mu (f)-\tau (f)>0$. More subtle counter-examples
(Example \ref{FW} below) are the members of Wall's $\cK$-unimodal
series $FW_{1,i}$ of space-curves (which are not weakly
quasihomogeneous): here the dimensions of the moduli spaces
are equal to one and $\mu -\tau$ is equal to two.

\subsection{Results for maps}\label{maps}

Motivated by Arnold's classification of
$A_{2k}$ singularities of curves in a symplectic manifold
\cite{ASympl}
Ishikawa and Janeczko \cite{IJ} have (in our notation)
classified all $\cVpA$-simple map-germs
$f:(\bC ,0)\to (\bC ^2,\Omega _p,0)$. Notice that the
volume-preserving diffeomorphisms of $\bC ^2$ are also
symplectomorphisms. Looking at their classification
we observe that $\cM (\cVpA ,f)=0$ if $f$ is the germ
of a q.h. curve. Furthermore, it is shown in \cite{IJ}
that $\cod (\cA _{\Omega _p,e},f)=\delta (f)$, hence
the $\cA$-finiteness of $f$ (which is equivalent to $\delta (f)<\infty$)
implies the finiteness of the moduli space
$\cM (\cVpA ,f)$.

Notice that for $p=1$ any volume-preserving diffeomorphism
of $(\bK ^p,0)$ is the identity. For functions the groups
$\VqG$, where $q=n$, are therefore the only ones of interest,
and the results in \S \ref{hypersurf} (which could be
reproved using our approach) completely settle the classification
problem for function-germs in the volume-preserving case.
We will therefore concentrate on maps of target dimension
$p>1$ (but all general results also hold for $p=1$, of course).

\section{$H$-isotopic volume forms}
\label{iso}

In this section we study $H$-isotopies joining pairs of
volume forms for subgroups $H$ of $\cD _q:=\Diff (\bK ^q,0)$.
In the subsequent sections we will always apply these results to
the subgroups $H=G_f^q$ introduced in the introduction,
but it might be worth mentioning that the results of this
section have some additional applications, for example
to singularities of vector fields (and the proofs
remain valid for subgroups $H$ of the group of diffeomorphisms
of an oriented, compact, smooth $q$-dimensional manifold).

Let $\Lambda^k$ denote the space (of germs) of smooth
differential $k$-forms on $(\bK ^q,0)$, and denote
the subset of $\Lambda ^q$ of (germs of)  volume forms
by $\mbox{Vol}$. For a given subgroup $H\subset \cD _q$ we consider
a $C_q$-module $M$ in the Lie algebra $LH$ of $H$
(and $M=LH$ if $LH$ itself is a $C_q$-module). In the following
$\Omega$ and $\Omega _i$ always denote (germs of) volume forms
in $(\bK ^q,0)$.

\begin{defn}\label{H-diff}
We say that $\Omega _0$ and $\Omega _1$ are $H$-diffeomorphic if
there is a diffeomorphism $\Phi\in H$ such that
$\Phi^{\ast}\Omega_1=\Omega_0$
\end{defn}

\begin{defn}\label{IH-diff}
We say that $\Omega _0$ and $\Omega _1$ are $H$-isotopic if there
is a smooth family of diffeomorphisms $\Phi_t\in H$ for $t\in
[0,1]$ such that $\Phi_1^{\ast}\Omega_1=\Omega_0$ and
$\Phi_0=\mbox {Id}$.
\end{defn}

\begin{rem}
Two $H$-isotopic volume forms $\Omega _0$ and $\Omega _1$ are
obviously $H$-diffeomorphic. The converse is not true in
general. For example $dx_1\wedge dx_2$ and $-dx_1\wedge dx_2$ are
diffeomorphic but not isotopic, since any diffeomorphism mapping
one to the other changes orientation.
\end{rem}

\begin{defn}\label{M-eq}
We say that $\Omega _0$ and $\Omega _1$ are $M$-equivalent if
there is a vector field $X\in M$ such that $\Omega _0-\Omega
_1=d(X\rfloor \Omega)$ (for any volume form $\Omega$).
\end{defn}

\begin{rem}
Definition \ref{M-eq} does not depend on the choice of a volume
form $\Omega$. If $\Omega^{\prime}$ is another volume form then
$\Omega=f\Omega^{\prime}$ for some non-vanishing function $f$.
Then
$\Omega_1-\Omega_0=d(X\rfloor\Omega)=d(fX\rfloor\Omega^{\prime})$
and $fX\in M$ ($M$ being a module).
\end{rem}

\begin{thm}
\label{H-Moser} If $\Omega _0$ and $\Omega _1$ are $M$-equivalent
volume forms, which for $\mathbb K=\mathbb R$ define the same
orientation, then $\Omega _0$ and $\Omega _1$ are $H$-isotopic.
\end{thm}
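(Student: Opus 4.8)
The plan is to use a Moser-type path method. Given that $\Omega_0$ and $\Omega_1$ are $M$-equivalent, so that $\Omega_1-\Omega_0=d(X\rfloor\Omega)$ for some $X\in M\subset LH$, I would first interpolate linearly by setting $\Omega_t=(1-t)\Omega_0+t\Omega_1=\Omega_0+t\,d(X\rfloor\Omega)$ for $t\in[0,1]$. The orientation hypothesis (over $\mathbb R$) is exactly what guarantees that each $\Omega_t$ is again a genuine volume form, i.e. a nowhere-vanishing $q$-form near the origin: when $\Omega_0$ and $\Omega_1$ induce the same orientation, the convex combination $\Omega_t$ never vanishes, whereas without this condition $\Omega_t$ could degenerate at some intermediate $t$ (the example $dx_1\wedge dx_2$ versus $-dx_1\wedge dx_2$ in the preceding remark shows precisely this failure). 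Over $\mathbb C$ the orientation issue does not arise.

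Next I would seek a flow $\Phi_t\in H$ with $\Phi_0=\mathrm{Id}$ and $\Phi_t^{\ast}\Omega_t=\Omega_0$ for all $t$, so that the endpoint $\Phi_1$ gives the desired isotopy. Differentiating the relation $\Phi_t^{\ast}\Omega_t=\Omega_0$ in $t$ and writing $X_t$ for the time-dependent generating vector field of the flow, the standard Moser computation using the Lie derivative and Cartan's formula $\mathcal L_{X_t}=d\circ\iota_{X_t}+\iota_{X_t}\circ d$ reduces the condition to
\[
\frac{d}{dt}\Omega_t+\mathcal L_{X_t}\Omega_t=0.
\]
Since $\frac{d}{dt}\Omega_t=d(X\rfloor\Omega)$ and $\Omega_t$ is closed (it is a top-degree form), Cartan's formula gives $\mathcal L_{X_t}\Omega_t=d(X_t\rfloor\Omega_t)$, so it suffices to solve the algebraic equation $X_t\rfloor\Omega_t=-X\rfloor\Omega$ for the vector field $X_t$ at each time $t$. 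Because $\Omega_t$ is a volume form, the contraction map $Y\mapsto Y\rfloor\Omega_t$ is a $C_q$-module isomorphism from vector fields onto $(q-1)$-forms, so $X_t$ is uniquely determined and depends smoothly on $t$.

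The main obstacle, and the crux of the argument, is to verify that the generating field $X_t$ obtained in this way lies in $M$ (equivalently in $LH$), so that the integrated flow $\Phi_t$ genuinely belongs to the subgroup $H$ rather than merely to the full diffeomorphism group. The key point is that $M$ is a $C_q$-module: writing $\Omega_t=u_t\,\Omega$ for a smooth, nowhere-zero function $u_t$ (with $u_0\equiv 1$ after rescaling $\Omega$), the defining equation $X_t\rfloor(u_t\Omega)=-X\rfloor\Omega$ becomes $X_t\rfloor\Omega=-u_t^{-1}\,(X\rfloor\Omega)$, and by the module isomorphism this forces $X_t=-u_t^{-1}X$. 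Since $u_t^{-1}\in C_q$ and $X\in M$, the module property yields $X_t\in M\subset LH$ at every $t$. Finally I would integrate the time-dependent field $X_t$ to obtain the flow $\Phi_t$; as $X_t$ vanishes at the origin (both $X$ and the rescaling preserve the base point) the flow is defined on a neighborhood of $0$ for all $t\in[0,1]$, lies in $H$, and satisfies $\Phi_1^{\ast}\Omega_1=\Omega_0$ with $\Phi_0=\mathrm{Id}$, which is exactly the required $H$-isotopy.
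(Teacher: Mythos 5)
Your proposal is correct and follows essentially the same route as the paper: linear interpolation $\Omega_t$, the orientation hypothesis ensuring $\Omega_t$ stays a volume form, reduction of the Moser equation to the contraction identity $X_t\rfloor\Omega_t=-X\rfloor\Omega$, and the observation that writing $\Omega_t=u_t\Omega$ forces $X_t=-u_t^{-1}X\in M$ by the $C_q$-module property, after which one integrates the vector field vanishing at the origin. The only difference is an immaterial sign convention in the defining relation of $M$-equivalence.
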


\begin{proof} We use Moser's homotopy method \cite{Moser}. Let
$\Omega_t=\Omega_0+t(\Omega_1-\Omega_0)$ for $t\in [0,1]$. It is
easy to see that if $\Omega_0$ and $\Omega_1$ define the same
orientation then $\Omega_t\in \mbox{Vol}$ for any $t\in [0,1]$. We
are looking for a family of diffeomorphisms  $\Phi_t\in H$,
$t\in [0,1]$, such that
\begin{equation}\label{eq1}
\Phi_t^{\ast}\Omega_t=\Omega_0
\end{equation}
and $\Phi_0=\Id$. Differentiating (\ref{eq1}) we obtain
$$
\Phi_t^{\ast}(L_{Y_t}\Omega_t+\Omega_1-\Omega_0)=0,
$$
where $Y_t\circ\Phi_t=\frac{d}{dt}\Phi_t$, which implies that
\begin{equation}
d(Y_t \rfloor \Omega_t)=\Omega_0-\Omega_1 .
\end{equation}
But $\Omega_0$ and $\Omega_1$ are $M$-equivalent, hence there exists
a vector field $X\in M$ such that $\Omega _0-\Omega _1=d(X\rfloor
\Omega)$ for some volume form $\Omega$. We want to find a family
of vector fields $Y_t$ satisfying the following condition:
\begin{equation}
\label{d-homot} Y_t \rfloor \Omega_t=X\rfloor \Omega.
\end{equation}
But $\Omega_t=g_t \Omega$ for some non-vanishing smooth function
$g_t$. Hence $Y_t = (1/g_t)X$ is a solution of (\ref{d-homot}) and
$Y_t\in M$, because $X\in M$ and $M$ is a module. The vector field
$Y_t$ vanishes at the origin, hence its flow exists on some
neighborhood of the origin for all $t\in [0,1]$. Integrating $Y_t$
we obtain a smooth family of diffeomorphisms $\Phi_t\in H$ for
$t\in [0,1]$ such that $\Phi_0=\Id$ and
$\Phi_t^{\ast}\Omega_t=\Omega_0$, which implies that $\Omega_0$
and $\Omega_1$ are $H$-isotopic.
\end{proof}

Next, we will show that for subgroups $H$ of $\cD_q$ with $LH$
a submodule of the $C_q$-module $\theta_q$
the existence of an $H$-isotopy between a pair of volume forms
is equivalent to the $LH$-equivalence of this pair,
provided that $LH$ is closed with respect to integration in
the following sense.

\begin{defn}
We say $LH$  is closed with respect to integration if for any
smooth family $X_t\in LH$, $t\in[0,1]$, the integral
$\int_0^1X_tdt$ belongs to $LH$.
\end{defn}

\begin{thm}
\label{2-sides} Let $LH$ be a submodule of $\theta _q$,
which is closed with respect to integration.
Over $\mathbb K=\mathbb R$ we also assume that
$\Omega_0$ and $\Omega_1$ define the same orientation.
Then $\Omega _0$ and $\Omega _1$ are $LH$-equivalent
if and only if $\Omega_0$ and $\Omega_1$ are $H$-isotopic.
\end{thm}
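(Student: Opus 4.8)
The plan is to prove the two implications separately, and to observe that one of them is already in hand. For the direction ``$LH$-equivalent $\Rightarrow$ $H$-isotopic'' I would simply apply Theorem \ref{H-Moser} with $M=LH$: the hypothesis that $LH$ is a submodule of $\theta_q$ makes it an admissible choice of $C_q$-module, and over $\bR$ the same-orientation assumption is exactly what Theorem \ref{H-Moser} requires. So the whole burden of the proof lies in the converse, ``$H$-isotopic $\Rightarrow$ $LH$-equivalent''.

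For the converse, suppose $\Phi_t\in H$ ($t\in[0,1]$) is a smooth isotopy with $\Phi_0=\Id$ and $\Phi_1^{\ast}\Omega_1=\Omega_0$, and let $Y_t$ be its generating time-dependent vector field, $Y_t\circ\Phi_t=\frac{d}{dt}\Phi_t$, as in the proof of Theorem \ref{H-Moser}. The structural point is that $Y_t\in LH$ for each $t$: the path $s\mapsto\Phi_s\circ\Phi_t^{-1}$ lies in the group $H$, passes through the identity at $s=t$, and has velocity $Y_t$ there. I would then differentiate the family of volume forms $\Phi_t^{\ast}\Omega_1$ and use Cartan's formula together with $d\Omega_1=0$ (since $\Omega_1$ is a top-degree form) to obtain
\[
\frac{d}{dt}\Phi_t^{\ast}\Omega_1=\Phi_t^{\ast}L_{Y_t}\Omega_1
=\Phi_t^{\ast}d(Y_t\rfloor\Omega_1)=d\,\Phi_t^{\ast}(Y_t\rfloor\Omega_1).
\]

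Integrating from $0$ to $1$ and pulling the exterior derivative outside the $t$-integral gives $\Omega_0-\Omega_1=d\int_0^1\Phi_t^{\ast}(Y_t\rfloor\Omega_1)\,dt$. The next step is to recognise the integrand as an interior product with a single fixed volume form: by naturality of $\rfloor$ under pullback, $\Phi_t^{\ast}(Y_t\rfloor\Omega_1)=(\Phi_t^{\ast}Y_t)\rfloor(\Phi_t^{\ast}\Omega_1)$, and writing $\Phi_t^{\ast}\Omega_1=g_t\Omega$ for a fixed reference volume form $\Omega$ and non-vanishing $g_t$ turns this into $W_t\rfloor\Omega$ with $W_t:=g_t\,\Phi_t^{\ast}Y_t$. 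Setting $X:=\int_0^1 W_t\,dt$ then yields $\Omega_0-\Omega_1=d(X\rfloor\Omega)$, which is the asserted $LH$-equivalence provided $X\in LH$.

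The main obstacle is exactly this last membership $X\in LH$, and it is where all three hypotheses on $LH$ are consumed, in order: (i) $\Phi_t^{\ast}Y_t\in LH$, because $LH$ is the Lie algebra of the group $H$ and hence invariant under the adjoint (pullback) action of $\Phi_t\in H$ --- concretely, if $\psi_s$ is an $H$-path generated by $Y_t$ then $\Phi_t^{-1}\circ\psi_s\circ\Phi_t$ is an $H$-path through the identity with velocity $\Phi_t^{\ast}Y_t$; (ii) $W_t=g_t\,\Phi_t^{\ast}Y_t\in LH$ for every $t$, because $LH$ is a $C_q$-submodule of $\theta_q$; and (iii) $X=\int_0^1 W_t\,dt\in LH$, because $LH$ is closed with respect to integration. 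Finally I would remark that the orientation hypothesis over $\bR$ is not actually needed in this direction: since $\Phi_1$ is isotopic to the identity it is orientation-preserving, so $\Omega_0=\Phi_1^{\ast}\Omega_1$ automatically induces the same orientation as $\Omega_1$.
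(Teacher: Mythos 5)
Your proposal is correct and follows essentially the same route as the paper: the forward direction via Theorem \ref{H-Moser} with $M=LH$, and the converse by differentiating the pulled-back family of volume forms, rewriting the integrand as $(\,g_t\,\Phi_t^{\ast}Y_t)\rfloor\Omega$, and invoking in turn the submodule property and closure under integration. The only cosmetic difference is how you justify $\Phi_t^{\ast}Y_t\in LH$ --- you argue via invariance of $LH$ under conjugation by elements of $H$, whereas the paper isolates this as Lemma \ref{pullback}, observing that $-\Phi_t^{\ast}Y_t$ is the generating vector field of the inverse isotopy $\Phi_t^{-1}$, which lies in $H$ because $H$ is a group.
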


\begin{proof}
The "only if" part follows directly from  Theorem \ref{H-Moser}.

For the converse, we require the following lemma

\begin{lem}
\label{pullback}
Let $\Phi_t$ be a smooth family of
diffeomorphisms and let $X_t$ be a family of vector fields such
that $\frac{d}{dt}\Phi_t=X_t\circ\Phi_t$. Then
$\frac{d}{dt}\Phi_t^{-1}=-(\Phi_t^{\ast}X_t)\circ \Phi_t^{-1}$.
\end{lem}

\begin{proof}[Proof of Lemma \ref{pullback}]
Differentiating $\Phi_t^{-1}\circ \Phi_t=\Id$ we obtain
$$0=\frac{d}{dt}(\Phi_t^{-1}\circ
\Phi_t)=\frac{d}{dt}(\Phi_t^{-1})\circ
\Phi_t+d(\Phi_t^{-1})\frac{d}{dt}\Phi_t,$$
which implies that
$\frac{d}{dt}(\Phi_t^{-1})=-d(\Phi_t^{-1})(X_t\circ\Phi_t)\circ\Phi_t^{-1}$.
But, by definition,
$\Phi_t^{\ast}X_t=d(\Phi_t^{-1})(X_t\circ\Phi_t)$.
\end{proof}

Returning to the proof of the theorem,
we assume that $\Omega_0$ and $\Omega_1$ are $H$-isotopic. Then
there exists, for all $t\in [0,1]$,
a smooth family of diffeomorphisms $\Phi_t\in H$ such that
$\Phi_0=\Id$ and $\Phi_1^{\ast}\Omega_0=\Omega_1$. Let
$(\Phi_t)'=\frac{d}{dt}\Phi_t=X_t\circ\Phi_t$, then
$$\Omega_1-\Omega_0=\Phi_1^{\ast}\Omega_0-\Omega_0
=\int_0^1(\Phi_t^{\ast}\Omega_0)'dt=
\int_0^1(\Phi_t^{\ast}\cL_{X_t}\Omega_0)dt
=\int_0^1\Phi_t^{\ast}d(X_t\rfloor\Omega_0)dt=$$
$$d\left(\int_0^1\Phi_t^{\ast}(X_t\rfloor\Omega_0)dt\right)
=d\left(\int_0^1(\Phi_t^{\ast}X_t)\rfloor\Phi_t
^{\ast}\Omega_0)dt\right)=
d\left(\int_0^1(\Phi_t^{\ast}X_t)\rfloor h_t \Omega_0)dt\right)
$$
for some smooth family of positive functions $h_t$.  Thus
$$\Omega_1-\Omega_0=
d\left(\int_0^1h_t\Phi_t^{\ast}X_tdt \rfloor\Omega_0\right).$$
Lemma \ref{pullback} implies $\Phi_t^{\ast}X_t\in LH$, and
using the fact that $LH$ is a
module we also have $h_t\Phi_t^{\ast}X_t\in LH$.
And $LH$ is closed with respect to integration, hence
$\int_0^1h_t\Phi_t^{\ast}X_tdt$ belongs to $LH$ too.
Therefore $\Omega_0$ and $\Omega_1$ are $LH$-equivalent,
as desired.
\end{proof}

\begin{defn}
The divergence of a vector field $X\in\theta _q$ with respect to a
given volume form $\Omega$ is, by definition, the smooth function
$\Div_{\Omega}(X)=d(X\rfloor\Omega)/\Omega$.
When the volume form $\Omega$ is understood from the context then
we simply write $\Div(X)$. And we have a map $\Div: \theta_q\to C_q$
defined by $X\mapsto \Div(X)$.
\end{defn}

\begin{cor}\label{mod}
Under the assumption of Theorem \ref{2-sides} the number of
$H$-moduli of volume forms is equal to
$$
\dim_{\bK}\frac{C_q}{\Div (LH)}.
$$
\end{cor}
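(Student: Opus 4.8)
The plan is to fix once and for all a reference volume form $\Omega$ and to exploit the $C_q$-module isomorphism $\Lambda^q\cong C_q$ sending a $q$-form $\omega$ to the function $\omega/\Omega$. Under this identification the corollary reduces to a comparison of two linear subspaces of $C_q$, and the role of Theorem \ref{2-sides} is to guarantee that the equivalence relation whose classes we are counting is exactly the $LH$-equivalence of Definition \ref{M-eq} (with $M=LH$), so that the classes admit a purely linear description.

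First I would make the count ``number of $H$-moduli of volume forms'' precise. By Theorem \ref{2-sides}, for volume forms defining the same orientation the relation of being $H$-isotopic coincides with $LH$-equivalence. Since $X\mapsto d(X\rfloor\Omega)$ is $\bK$-linear and $LH$ is a module, the set
\[
V:=\{\, d(X\rfloor\Omega):X\in LH\,\}
\]
is a $\bK$-linear subspace of $\Lambda^q$, and $LH$-equivalence is precisely the relation $\Omega_0\sim\Omega_1$ if and only if $\Omega_0-\Omega_1\in V$. Hence each equivalence class is the intersection with $\mbox{Vol}$ of a coset of $V$ in $\Lambda^q$. Because $\mbox{Vol}$ is open in $\Lambda^q$ (over $\bR$ an open half, determined by the orientation), each class is an open subset of its coset $\Omega_0+V$, so transverse to the classes one sees exactly the directions of $\Lambda^q/V$; thus the number of moduli is $\dim_{\bK}\Lambda^q/V$.

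Next I would transport this quotient to $C_q$. The isomorphism $\omega\mapsto\omega/\Omega$ carries $V$ onto
\[
\{\, d(X\rfloor\Omega)/\Omega:X\in LH\,\}=\{\,\Div_{\Omega}(X):X\in LH\,\}=\Div(LH),
\]
by the very definition of divergence, and therefore induces an isomorphism $\Lambda^q/V\cong C_q/\Div(LH)$ of $\bK$-vector spaces. Comparing dimensions yields $\dim_{\bK}\Lambda^q/V=\dim_{\bK}C_q/\Div(LH)$, which is the asserted formula. Here I would note that $\Div$ is only $\bK$-linear, not $C_q$-linear (indeed $\Div(fX)=f\Div(X)+X(f)$), so both $V$ and $\Div(LH)$ are taken merely as $\bK$-subspaces and the quotient $C_q/\Div(LH)$ is understood as a $\bK$-vector space; this is harmless since the statement only compares $\bK$-dimensions.

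The step I expect to require the most care is the first one: turning the informal phrase ``number of $H$-moduli'' into the honest statement that the space of volume forms modulo equivalence is locally modeled on $\Lambda^q/V$. The substance is that the equivalence classes are \emph{genuine} cosets of the single fixed linear subspace $V$ (not merely contained in cosets), which follows at once from Definition \ref{M-eq} together with Theorem \ref{2-sides}, and that over $\bR$ the orientation constraint only restricts each class to an open subset of its coset and hence leaves the transverse (modulus) count unchanged.
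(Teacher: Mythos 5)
Your proposal is correct and follows essentially the same route as the paper's own proof: both invoke Theorem \ref{2-sides} to identify the $H$-moduli of volume forms with the classes of $LH$-equivalence, observe that these classes are (relatively open pieces of) cosets of the linear space $V=\{d(X\rfloor\Omega):X\in LH\}$ so that the modulus count is $\dim_{\bK}\Lambda^q/V$, and then use the identification $\omega\mapsto\omega/\Omega$ to carry $V$ onto $\Div(LH)$. Your explicit remarks on the openness of $\mbox{Vol}$ and on $\Div$ being only $\bK$-linear make precise the points the paper dispatches with ``it is easy to see,'' but the argument is the same.
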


\begin{proof} It is easy to see that spaces $C_q/\Div(LH)$ and
$\Lambda^q/\{d(X\rfloor\Omega):X\in LH\}$ are
isomorphic. By Theorem \ref{2-sides} the number of $H$-moduli of
volume forms is equal to the dimension of
$\mbox{Vol}/\sim _{LH}$. But it is easy
to see that the spaces $\Lambda^q/\{d(X\rfloor\Omega):X\in
LH\}$ and $\mbox{Vol}/\sim _{LH}$ are
equal if there exists a $X\in LH$ such that $d(X\rfloor\Omega)$ is a
volume form. Otherwise $\Lambda^q/\{d(X\rfloor\Omega):X\in
LH\}\setminus \mbox{Vol}/\sim _{LH}$ is
a linear subspace of positive codimension in
$\Lambda^q/\{d(X\rfloor\Omega):X\in LH\}$. This implies that
$$\dim_{\bK}
\frac{\Lambda^q}{\{d(X\rfloor\Omega):X\in LH\}}= \dim_{\bK}
\mbox{Vol}/\sim _{LH}.
$$
\end{proof}

Next, we describe two sufficient conditions for the existence of
a single $M$-equivalence class of volume forms in $(\mathbb K^q,0)$
(recall $M$ is a $C_q$-module in $LH$). For the first sufficient
condition we require the following

\begin{defn}\label{gen-Eul}
A linear vector field
$$
E_{w}=\sum_{i=1}^q w_i x_i\frac{\partial}{\partial x_i}.
$$
with integer coefficients $w_i$ is called a generalized
Euler vector field (for coordinates
$(x_1,\ldots,x_q)\in\bK ^q$ and weights $w=(w_1,\ldots,w_q)$).
\end{defn}

We first consider generalized Euler vector fields with
non-negative weights $w_i$ (for positive weights we obtain the
usual Euler vector fields). For $\cK_{\Omega_p}$-equivalence we
also require linear vector fields with negative coefficients (see
Theorem \ref{orb-wqh} below).

\begin{prop}\label{prEuler}
Let $X$ be the germ of a smooth vector field
on $(\mathbb K^q,0)$ which is locally diffeomorphic to a
generalized Euler vector field with non-negative weights
and positive total weight.
If $X$ generates a $C_q$-module in $LH$ then any two
germs of volume forms (which over $\bK =\bR$ define the same orientation)
are $H$-isotopic.
\end{prop}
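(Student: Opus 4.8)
The plan is to show that any two volume forms $\Omega_0,\Omega_1$ (defining the same orientation over $\bR$) are $M$-equivalent for the module $M=C_q\cdot X\subseteq LH$ generated by $X$, and then to invoke Theorem~\ref{H-Moser}. Since $X$ is diffeomorphic to a generalized Euler field, I would first change coordinates so that $X=E_w=\sum_i w_i x_i\,\partial/\partial x_i$ with all $w_i\ge 0$ and $\sum_i w_i>0$ (Definition~\ref{gen-Eul}). By the Remark following Definition~\ref{M-eq}, $M$-equivalence does not depend on the chosen volume form, so I am free to work with the standard form $\Omega=dx_1\wedge\cdots\wedge dx_q$ in these coordinates, for which a direct computation gives $\Div_\Omega(E_w)=\sum_i w_i$.

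Writing $\Omega_0-\Omega_1=g\,\Omega$ for the uniquely determined $g\in C_q$, the task reduces to finding $h\in C_q$ with $d\big((hX)\rfloor\Omega\big)=g\,\Omega$, since then $hX\in M$ witnesses the $M$-equivalence. Using the top-form identity $\cL_{hX}\Omega=d\big((hX)\rfloor\Omega\big)$ together with $\cL_{hX}\Omega=\big(h\,\Div_\Omega(X)+X(h)\big)\Omega$, this is exactly the linear equation
\[
E_w(h)+\Big(\textstyle\sum_i w_i\Big)h=g .
\]
On monomials $E_w$ acts with eigenvalue equal to the weighted degree $\langle w,\alpha\rangle\ge 0$, so formally the operator on the left is diagonal with eigenvalues $\langle w,\alpha\rangle+\sum_i w_i\ge\sum_i w_i>0$ and is therefore invertible. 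The real content of the proposition is to make this solvability genuine in the smooth (and analytic) category.

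To produce an honest smooth solution I would integrate along the flow $\phi_s(x)=(e^{w_1 s}x_1,\ldots,e^{w_q s}x_q)$ of $E_w$ and set
\[
h(x)=\int_{-\infty}^0 e^{(\sum_i w_i)s}\,g\big(\phi_s(x)\big)\,ds .
\]
Differentiating $e^{(\sum_i w_i)s}\,h(\phi_s(x))$ in $s$ shows that this $h$ solves the equation above, so everything comes down to convergence and smoothness of the integral. Here the two hypotheses enter decisively: because all $w_i\ge 0$, the flow $\phi_s$ is non-expanding for $s\le 0$, so $\phi_s(x)$ stays in the domain of $g$ and $g(\phi_s(x))$ remains bounded; and because $\sum_i w_i>0$, the factor $e^{(\sum_i w_i)s}$ is integrable on $(-\infty,0]$, which both forces the boundary term at $s\to-\infty$ to vanish and guarantees absolute convergence. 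Smoothness then follows by differentiation under the integral sign, the contracting flow keeping all derivatives under control. I expect this last step -- producing a smooth rather than merely formal solution of the cohomological equation -- to be the main obstacle, and it is precisely where positivity of the total weight and non-negativity of the individual weights are used. With such an $h$ in hand, $hX\in M\subseteq LH$ realizes $\Omega_0-\Omega_1=d\big((hX)\rfloor\Omega\big)$, so $\Omega_0$ and $\Omega_1$ are $M$-equivalent, and Theorem~\ref{H-Moser} yields the desired $H$-isotopy.
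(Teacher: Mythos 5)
Your proposal is correct and follows essentially the same route as the paper: reduce to $M$-equivalence for $M=C_q\cdot E_w$, derive the equation $E_w(h)+(\sum_i w_i)h=g$, and solve it by integrating $e^{(\sum_i w_i)s}\,g(\phi_s(x))$ over the contracting flow on $(-\infty,0]$, which is exactly the paper's formula. The only cosmetic difference is that the paper justifies smoothness by substituting $s=e^t$ to rewrite the integral as $\int_0^1 s^{\alpha}f(F_s(x))\,ds$ (rescaling the weights so that $\alpha>1$), which also covers the analytic category, whereas you argue directly by differentiation under the integral sign.
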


\begin{proof} Let $E_{w}$ be (the germ of) the Euler
vector field for a coordinate system
$(x,y)=(x_1,\ldots,x_k,y_1,\ldots,y_{q-k})$ with weights
$w=(w_1,\ldots,w_k,0,\cdots,0)$, where $w_1,\cdots,w_k$ are
positive and let $\Omega_0$ be the germ of the volume-form
$dx_1\wedge\ldots\wedge dx_k \wedge dy_1\wedge\ldots\wedge
dy_{q-k}$. By Theorem \ref{H-Moser}, it is enough to show that for any
smooth $q$-form $\omega$ on $(\bK^q ,0)$
there exists a smooth function-germ $g$ on
$(\bK^q ,0)$ such that $\omega=d(g E_{w}\rfloor \Omega_0)$.

Let $G_t(x,y)=(e^{w_1
t}x_1,\ldots,e^{w_k t}x_k,y_1,\ldots,y_{q-k})$ for $t\le 0$. It is
easy to see that
$$
(G_t)':=\frac{d}{dt}G_t=E_{w}\circ G_t, \ G_0={\Id} , \
\lim _{t\rightarrow - \infty} G_t(x,y)=(0,y)
$$
for any $(x,y)\in\bK^q$. Thus
\begin{equation}
\label{integ} \omega=G_0^{\ast}\omega-\lim _{t\rightarrow - \infty}
G_t^{\ast}\omega=\int_{-\infty}^{0}(G_t^{\ast}\omega)'dt.
\end{equation}
But $\omega=f \Omega_0$ for some smooth
function-germ $f$ and
$$
(G_t^{\ast}\omega)'=G_t^{\ast}L_{E_{w}}\omega
=G_t^{\ast}d(E_{w}\rfloor\omega)=d(G_t^{\ast}(E_{w}\rfloor\omega)),
$$
hence
$$
(G_t^{\ast}\omega)'=d( G_t^{\ast}(E_{w}\rfloor f
\Omega_0))=d((f\circ G_t)G_t^{\ast}(E_{w}\rfloor \Omega_0)).
$$
One then checks by a direct calculation that
$G_t^{\ast}(E_{w}\rfloor
\Omega_0)=e^{t\sum_{i=1}^kw_i}(E_{w}\rfloor \Omega_0)$. Therefore
$(G_t^{\ast}\omega)'= d((f\circ
G_t)e^{t\sum_{i=1}^kw_i}(E_{w}\rfloor \Omega_0))$. Combining this
with (\ref{integ}) we obtain
$$
\omega=d(\int_{-\infty}^{0}((f\circ
G_t)e^{t\sum_{i=1}^kw_i})dt(E_{w}\rfloor
\Omega_0))=d(g(E_{w}\rfloor \Omega_0)),
$$
where $g$ is a function-germ on $(\bK^q ,0)$ defined as follows:
$$
g(x,y)=\int_{-\infty}^{0}(e^{t\sum_{i=1}^kw_i}(f(G_t(x,y)))dt.
$$
The function-germ $g$ is smooth, because
$$
\int_{-\infty}^{0}(e^{t\sum_{i=1}^kw_i}(f(G_t(x,y)))dt=\int_{0}^{1}(s^{\alpha}f(F_s(x,y))ds ,
$$
where $\alpha=(\sum_{i=1}^kw_i)-1$ and
$$
F_s(x_1,\ldots,x_k,y_1,\ldots,y_{q-k})=(s^{w_1}x_1,\ldots,s^{w_k}x_k,
y_1,\ldots,y_{q-k})
$$
for any $(x,y)=(x_1,\dots,x_k,y_1,\ldots,y_{q-k})$ and
$s\in[0,1]$. Multiplying the weights by a sufficiently large constant
we may assume that $\alpha>1$.
\end{proof}

We conclude this section by stating a second sufficient condition
for the existence of a single $M$-orbit of volume forms. Here we
assume that $LH$ contains a module $\mathcal M_q X$, where $X$ is
the germ of a non-vanishing vector field and $\mathcal M_q$ is the
maximal ideal of $C_q$.

\begin{prop}
\label{non-zero} If $X\in\theta _q$, $X(0)\neq 0$, and the
$C_q$-module $\mathcal M_q X$ is contained in $LH$ then any two
germs of volume forms (which over $\mathbb K=\mathbb R$ define the
same orientation) are $H$-isotopic.
\end{prop}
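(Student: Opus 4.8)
The plan is to deduce the result from Theorem \ref{H-Moser} by showing that, with the $C_q$-module $M=\mathcal M_q X\subset LH$, \emph{any} two volume forms are $M$-equivalent. Since $M$-equivalence of $\Omega_0$ and $\Omega_1$ means $\Omega_0-\Omega_1=d(Y\rfloor\Omega)$ for some $Y\in M$, and this condition is independent of the chosen volume form $\Omega$ (as noted after Definition \ref{M-eq}), it suffices to prove that the divergence map $\Div\colon\mathcal M_q X\to C_q$ is surjective for a single conveniently chosen $\Omega$: writing $\Omega_0-\Omega_1=h\,\Omega$ for the unique $h\in C_q$, surjectivity produces $Y\in\mathcal M_q X$ with $\Div_\Omega(Y)=h$, i.e. $d(Y\rfloor\Omega)=\Omega_0-\Omega_1$. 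Theorem \ref{H-Moser} (whose orientation hypothesis over $\bR$ is exactly the one assumed here) then yields the desired $H$-isotopy.

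To prove surjectivity of $\Div$ on $\mathcal M_q X$ I would first straighten $X$. Since $X(0)\neq 0$, the flow-box (rectification) theorem provides local coordinates $(x_1,\ldots,x_q)$ on $(\bK^q,0)$ in which $X=\partial/\partial x_1$; this is available in both the smooth and the holomorphic category. I choose the adapted volume form $\Omega=dx_1\wedge\cdots\wedge dx_q$. For $Y=gX$ with $g\in\mathcal M_q$ a direct computation gives $Y\rfloor\Omega=g\,dx_2\wedge\cdots\wedge dx_q$ and hence
\[
d(Y\rfloor\Omega)=\frac{\partial g}{\partial x_1}\,dx_1\wedge\cdots\wedge dx_q,
\qquad\text{so}\qquad \Div_\Omega(gX)=\frac{\partial g}{\partial x_1}.
\]
Thus surjectivity of $\Div$ on $\mathcal M_q X$ is equivalent to solving the single first-order equation $\partial g/\partial x_1=h$ with $g\in\mathcal M_q$, for arbitrary $h\in C_q$.

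This last step is immediate by integration in the first variable: set
\[
g(x_1,\ldots,x_q)=\int_0^{x_1}h(t,x_2,\ldots,x_q)\,dt,
\]
which is a smooth (resp.\ holomorphic) germ satisfying $\partial g/\partial x_1=h$ and $g(0)=0$, so $g\in\mathcal M_q$ and $Y=gX\in\mathcal M_q X\subset LH$. This completes the reduction and proves the $M$-equivalence of $\Omega_0$ and $\Omega_1$. I do not expect a serious obstacle: the entire content is the reduction to the scalar equation $\partial g/\partial x_1=h$ via the flow-box normalization, after which solvability is trivial. The only points meriting care are the coordinate-independence of $M$-equivalence (which legitimizes straightening $X$ and choosing the adapted $\Omega$) and the verification that the constructed $g$ lies in the maximal ideal $\mathcal M_q$, so that $Y=gX$ genuinely belongs to the module $\mathcal M_q X\subset LH$ and not merely to $C_q\cdot X$.
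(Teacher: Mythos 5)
Your proposal is correct and follows essentially the same route as the paper: both straighten $X$ to $\partial/\partial x_1$ via the flow-box theorem, observe that $\Div_\Omega(gX)=\partial g/\partial x_1$ for the adapted volume form, solve by integrating in $x_1$ (so that $g\in\cM _q$), and conclude via Theorem \ref{H-Moser}. The only difference is presentational: you phrase the key step as surjectivity of $\Div$ on $\cM _qX$ and make the coordinate-independence of $M$-equivalence explicit, which the paper leaves implicit.
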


\begin{proof} $X(0)\ne 0$ implies that $X$ is diffeomorphic to
$\partial /\partial x_1$. Any germ of a $q$-form has in such a
coordinate system, for some $f\in C_q$, the following form
$$
f(x)dx_1\wedge dx_2 \wedge \cdots \wedge
dx_q=d(\int_0^{x_1}f(t,x_2,\cdots,x_q)dt \frac{\partial}{\partial
x_1}\rfloor dx_1\wedge dx_2 \wedge \cdots \wedge dx_q).
$$
And $\int_0^{x_1}f(t,x_2,\cdots,x_q)dt\partial /\partial x_1$
belongs to $\mathcal M_q\partial /\partial x_1$. Thus any two
germs of volume forms (which over $\bR$ define the same
orientation) are $H$-isotopic, by Theorem \ref{H-Moser}.
\end{proof}

\section{The moduli space $\cM (\VqG ,f)$}\label{moduli}

In this section we study smooth map-germs
$f:(\bK ^n,0)\to (\bK ^p,0)$ (for $\bK =\bC$ smooth means
complex-analytic, for $\bK =\bR$ smooth means either $C^{\infty}$
or real-analytic). We set $\cR :=\cD _n$ and
$\cL :=\cD _p$ (one can compose $f$ with elements of $\cD _n$ on
the right and with elements of $\cD _p$ on the left, which explains
this notation).

Let $G$ be one of the Mather groups $\cA ,\cK ,\cR ,\cL$
or $\cC$ (all of which can be considered as subgroups
of $\cA$ or $\cK$, e.g. $\cR\times 1\subset\cA$)
acting on the space of smooth map-germs
$f:(\bK ^n,0)\to (\bK ^p,0)$. And let $x=(x_1,\ldots ,x_n)$
and $y=(y_1,\ldots ,y_p)$ be coordinates on
$\bK ^n$ and $\bK ^p$, respectively.
The differential of the orbit map $g\mapsto g\cdot f$
($g\in G$ and the action on $f$ depends on the definition
of $G$)
\[
\g _f:LG\longrightarrow LG\cdot f
\]
has kernel $LG_f$ (where $G_f$ is the stabilizer of $f$ in $G$).
Recall that for $G=\cA$ the map $\g _f$ is given by
\[
L\cA =\cM _n\theta _n\oplus\cM _p\theta _p\to \cM _n\theta _f,
~~~(a,b)\mapsto tf(a)-{\om}f(b),
\]
where $tf(a)=df(a)$ and $wf(b)=b\circ f$,
and for $G=\cK$ it is given by
\[
L\cK =\cM _n\theta _n\oplus gl_p(C_n)\to \cM _n\theta _f,
~~~(a,B)\mapsto tf(a)-B\cdot f.
\]
The kernel of $\g _f$ inherits a $C_r$ module structure from
$LG$, where $r=p$ (or $r=n$) for $G$ a subgroup of $\cA$
(or $\cK$). Projecting onto source or target factors
\[
LG_f^n\longleftarrow LG_f\longrightarrow LG_f^p
\]
preserves this $C_r$ module structure. Denoting the factors
of $G_f$ by $G_f^n$ and $G_f^p$ their Lie algebras are the
above projections.
We also denote the factors of $G$ by $G^n$ and $G^p$
(hence e.g. for $G=\cA$ we have $G^n=\cR$).
Superscripts always denote projections onto one of the
factors.

Consider subgroups $\VnG$ and $\VpG$ of $G$
in which the diffeomorphisms (or families of diffeomorphisms
for $G=\cC$, see below)
preserve a given volume form $\Omega _n$ or $\Omega _p$
in the source or target, respectively.
For $r=n$ or $p$ and a given volume form $\Omega _r$ on $\bK ^r$
let $\Div :\cM _q\theta _q\to C_r$ be the
map that sends a vector field (vanishing at 0 in
$\bK ^n$ or $\bK ^p$) to its divergence.

For $\cK$-equivalence in combination with a volume form in
the target there are two ways to define the $\cVpC$ component.
But both version yield identical $\cVpK$-orbits (just as
the alternative definitions of $\cK$ yield the same $\cK$-orbits).

(1) In the original definition of $\cK$ by Mather,
$\cC$ consists
of diffeomorphisms $H=(\phi (x),\varphi (x,y))\in \cD _{n+p}$,
with $\varphi (x,0)=0$ for all $x\in (\bK ^n,0)$,
and the action on $f$ is given by
$H\cdot f:=\varphi (x,f\circ\phi (x))$.
We can think of $H$ as a $n$-parameter family of diffeomorphisms
$\{ \varphi _x\}$, $x\in\bK ^n$, acting on $f$ by sending
$x$ to $\varphi _x\circ f(x)$.
If $\Omega _p$ is a volume
form on $(\bK ^p,0)$ we require that each
$\varphi _x$ preserves $\Omega _p$
(i.e. $\varphi _x^{\ast}\Omega _p=\Omega _p$
for all $x\in (\bK ^n,0)$). In this way we obtain
a subgroup $\cVpC$ of $\cC$, and $\cVpK :=\cR\cdot \cVpC$
(semi-direct product).

(2) In the linearized version of $\cK$ we set
$\cC :=GL_p(C_n)$ and restrict
to $\cVpC =SL_p(C_n)$, then $L\cVpC =sl_p(C_n)$ consists
of $p\times p$ matrices over $C_n$ with zero trace.
And, again, $\cVpK :=\cR\cdot \cVpC$.
Then $\Div$ can be considered as a map $B\mapsto\trace B$
as follows:
the map $gl_p(C_n)\to \cM _n\theta _f$, sending
$B$ to $B\cdot f$ (multiplication of $f$ as a column vector
of its component functions by a matrix $B=(b_{ij})$), can
also be written $B\cdot f=X_B\circ f$, where
$X_B=\sum _{i=1}^p (b_{i1}(x)y_1+\ldots +b_{ip}(x)y_p)\partial /\partial y_i$
is a linear vector field in $\bK ^p$ with coefficients
$b_{ij}\in C_n$. Hence $\Div X_B =\trace B\in C_n$.

For any of the above volume preserving subgroups $\VqG$
of $G$ we have the following

\begin{prop}\label{Omega-moduli}
For $q=n$ or $p$, and $\Div :\cM _q\theta _q\to C_r$
(where $r=n$ for $G_f^q=\cK _f^p$ and $r=q$ in all other cases),
we have an isomorphism
\[
\cM (\VqG ,f):= \frac{LG\cdot f}{L\VqG\cdot f}\cong
\frac{C_r}{\Div (LG_f^q)}.
\]
\end{prop}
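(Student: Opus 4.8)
The plan is to reduce the statement to a short chain of module isomorphisms, once the map $\Div\circ\pi^q$ is identified with the passage from $LG$ to $L\VqG$. Since $\g_f:LG\to LG\cdot f$ is surjective by definition and $L\VqG\cdot f=\g_f(L\VqG)$, the moduli space is the quotient $\g_f(LG)/\g_f(L\VqG)$. Pulling this quotient back along the surjection $\g_f$, the kernel of the composite $LG\to\g_f(LG)/\g_f(L\VqG)$ is exactly $\{v:\g_f(v)\in\g_f(L\VqG)\}=L\VqG+\ker\g_f=L\VqG+LG_f$, so that
\[
\cM(\VqG,f)\cong\frac{LG}{L\VqG+LG_f}.
\]

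Next I would identify $L\VqG$ as the kernel of a divergence map on $LG$. Let $\pi^q:LG\to\cM_q\theta_q$ be the projection onto the $q$-factor (source for $q=n$, target for $q=p$; for $\cK$ with $q=p$ this factor is $gl_p(C_n)$ and $\Div$ is read as $\trace$). The relevant infinitesimal fact is that the flow of a vector field $X$ preserves $\Omega_q$ iff $L_X\Omega_q=0$, and since $\Omega_q$ is a top form $L_X\Omega_q=d(X\rfloor\Omega_q)=\Div(X)\,\Omega_q$; thus the $q$-factor of $v\in LG$ generates a volume-preserving flow iff $\Div(\pi^q(v))=0$. Hence $L\VqG=\ker\psi$ for $\psi:=\Div\circ\pi^q:LG\to C_r$. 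I would then check that $\psi$ is surjective, which reduces to surjectivity of $\Div:\cM_q\theta_q\to C_q$ (resp.\ $\trace:gl_p(C_n)\to C_n$): given $g$, the field $(\int_0^{x_1}g\,dt)\,\partial/\partial x_1$ lies in $\cM_q\theta_q$ and has divergence $g$, and for the trace one takes the matrix $\mathrm{diag}(g,0,\dots,0)$.

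With these in hand, $\psi$ induces an isomorphism $\bar\psi:LG/L\VqG\xrightarrow{\sim}C_r$. Under $\bar\psi$ the image of the submodule $L\VqG+LG_f$ is $\psi(L\VqG)+\psi(LG_f)=\Div(\pi^q(LG_f))=\Div(LG_f^q)$, since $\psi$ annihilates $L\VqG$. Combining this with the first isomorphism gives
\[
\cM(\VqG,f)\cong\frac{LG}{L\VqG+LG_f}\cong\frac{C_r}{\Div(LG_f^q)},
\]
as required.

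I expect the main obstacle to lie not in any single computation but in carrying out the second step uniformly across all the Mather groups $G$ and both choices of $q$: one must match $\pi^q$, the ambient factor $\cM_q\theta_q$, the divergence map, and the index $r$ to the correct conventions in each case. The most delicate instance is $\cVpK$, where the target ``factor'' is $gl_p(C_n)$ rather than genuine diffeomorphisms, $\Div$ is replaced by $\trace$, and $r=n$ rather than $p$; here both the identification $L\VqG=\ker\psi$ and the surjectivity of $\psi$ must be re-read through the linearized definition of $\cC$. Once these conventions are checked, every remaining step is a formal application of the isomorphism theorems for $C_r$-modules.
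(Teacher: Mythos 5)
Your proof is correct and follows essentially the same route as the paper: the paper likewise considers the epimorphism $\be=\Div\circ\pi:LG\to C_r$ with kernel $L\VqG$, passes to the induced map $\bar\be:LG/L\VqG\xrightarrow{\sim}C_r$, and identifies the image under $\bar\be$ of the kernel of the induced orbit map with $\Div(LG_f^q)$ --- exactly your computation of $\psi(L\VqG+LG_f)$. The only cosmetic difference is that you first form the quotient $LG/(L\VqG+LG_f)$ and then transport it, while the paper composes the two induced isomorphisms $\bar\be\circ\bar\g^{-1}$; your explicit verification of the surjectivity of $\Div$ and $\trace$ is a small addition the paper leaves implicit.
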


\begin{proof}
Let $\pi :LG\to LG^q$ be the projection onto one of the factors,
so that for $u=(a,b)$ we have $v:=\pi (u)$ is equal to
$a\in\cM _n\theta _n$ or $b$, where either $b\in\cM _p\theta _p$
(for $G=\cA$) or $b=X_B$ for some $B\in gl_p(C_n)$ (for $G=\cK$).
(Recall that in the latter case $\Div (X_B)=\trace B$.)
Then consider the epimorphism
\[
\be : LG\To C_r,~~~~~u\mapsto \Div (v).
\]
Factoring out the kernel we obtain an isomorphism
\[
\bar\be :\frac{LG}{L\VqG}\To C_r.
\]
We also have a well-defined map
\[
\g :\frac{LG}{L\VqG}\To
\frac{\cM _n\cdot\theta _f}{L\VqG\cdot f}
\]
sending $[(a,b)]$ to $[tf(a)-{\om}f(b)]$ (for $G$ a subgroup
of $\cA$) and $[(a,B)]$ to $[tf(a)-B\cdot f]$ or, equivalently,
$[(a,X_B)]$ to $[tf(a)-X_B\circ f]$
(for $G$ a subgroup of $\cK$). We see that
\[
\im\g =\frac{LG\cdot f}{L\VqG\cdot f}
\]
and that $\bar\be (\ker\g )=\Div (LG_f^q)$.
Factoring out the kernel of $\g$ yields an
isomorphism $\bar\g$ onto $\im\g$
so that $\bar\be\circ\bar\g ^{-1}$ is the desired
isomorphism.
\end{proof}

\begin{rem}
For $G=\cA$ the vector fields
$(a,b)\in L\cA _f$, $b\in L\cA _f^p$ and $a\in L\cA _f^n$
are also said to be $f$-related, liftable and lowerable,
respectively.
\end{rem}

Notice that $LG_f^q$ inherits a $C_r$ module structure,
where $r=n$ or $p$, from $LG_f$ and $LG$. In fact, we have

\begin{lem}\label{submodule}
$LG_f$ is a $C_r$-submodule of $LG$ ($r=p$ or $n$ for $G$
a subgroup of $G=\cA$ or $\cK$, respectively), which is
closed under integration.
The same is true for the factors $LG_f^q$ of $LG_f$.
\end{lem}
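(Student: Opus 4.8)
The plan is to derive both assertions from a single structural fact: that $\gamma_f$ and the projections $\pi_q$ are $C_r$-linear operators whose defining data depend only on the fixed germ $f$, and in particular not on the integration parameter $t$. Everything else is then formal.

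First I would check that $\gamma_f : LG \to \cM_n\theta_f$ is a homomorphism of $C_r$-modules. For $G\subset\cK$ (so $r=n$) this is immediate from $\gamma_f(a,B)=tf(a)-B\cdot f$, since both $tf$ and $B\mapsto B\cdot f$ are $C_n$-linear. For $G\subset\cA$ (so $r=p$) one uses the $C_p$-module structure on $LG$ in which $C_p$ acts on the source factor through $f^\ast$, together with the $C_p$-structure on $\cM_n\theta_f$ via $f^\ast$; the identity $\gamma_f\bigl((h\circ f)a,\,hb\bigr)=(h\circ f)\bigl(tf(a)-b\circ f\bigr)=h\cdot\gamma_f(a,b)$ is precisely the required compatibility. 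Hence $LG_f=\ker\gamma_f$ is a $C_r$-submodule of $LG$. Since the projections $\pi_n,\pi_p:LG\to LG^q$ are themselves $C_r$-linear (again using the twisted action on the source factor when $r=p$), the factors $LG_f^q=\pi_q(LG_f)$ are $C_r$-submodules of $LG^q$.

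Next I would prove that $LG_f$ is closed under integration. The operator $\gamma_f$ is $\bK$-linear and its coefficients (built from $df$ and $-\circ f$) do not involve $t$, so it commutes with $\int_0^1(\cdot)\,dt$. Thus for a smooth family $X_t\in LG_f$ we get $\gamma_f\bigl(\int_0^1 X_t\,dt\bigr)=\int_0^1\gamma_f(X_t)\,dt=0$, i.e. $\int_0^1 X_t\,dt\in\ker\gamma_f=LG_f$; and the integral stays in $LG$ because each coefficient space $\cM_n$, $\cM_p$, $C_n$ is closed under integration of smooth families (vanishing at the origin is preserved).

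Finally, for the factors I would reduce integration-closure to a smooth-lifting statement: given a smooth family $Y_t\in LG_f^q$, it suffices to produce a smooth family $X_t\in LG_f$ with $\pi_q(X_t)=Y_t$, for then $\int_0^1 Y_t\,dt=\pi_q\bigl(\int_0^1 X_t\,dt\bigr)\in\pi_q(LG_f)=LG_f^q$ by the previous paragraph (the coordinate projection $\pi_q$ trivially commutes with $\int dt$). Constructing the lift means solving, smoothly in $t$, the $t$-independent linear equation defining $LG_f$ — for $G=\cA$, $q=p$ this is $tf(a_t)=Y_t\circ f$ for the complementary component $a_t$, and analogously in the remaining cases. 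This smooth solvability is the only delicate point, and the main obstacle: a solution exists for each fixed $t$ exactly because $Y_t\in LG_f^q$, but one must choose it smoothly in $t$. Here I would use that the operator ($tf$, resp. $b\mapsto b\circ f$, resp. $B\mapsto B\cdot f$) is $t$-independent with a finitely generated image $\langle g_1,\ldots,g_m\rangle$: one writes $Y_t\circ f=\sum_i c_i(t)\,g_i$ with $c_i(t)\in C_n$ depending smoothly on $t$ (a parametrized membership/division statement), and sets $a_t=\sum_i c_i(t)\,e_i$ with $tf(e_i)=g_i$. The crux is precisely this smooth dependence of the coefficients $c_i(t)$; it holds in the analytic category by coherence and Noetherianity of $C_r$, and in the $C^\infty$ category by the corresponding preparation-type results, and it is here that I would expect to spend the real work.
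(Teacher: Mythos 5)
Your argument is essentially the paper's: the paper likewise treats the module structure as immediate and obtains integration-closure of $LG_f$ from the fact that $\gamma _f$ commutes with $\int_0^1(\cdot)\,dt$. The only divergence is at the factors $LG_f^q$, where the paper integrates a family $v_t\in LG_f$ and remarks that the $q$-component of $\int_0^1 v_t\,dt$ "clearly" lies in $LG_f^q$, whereas you correctly isolate the real content of that step --- lifting a smooth family $Y_t\in LG_f^q$ to a smooth family in $LG_f$ so that the previous argument applies --- and sketch the parametrized-division argument needed for it; this is more care than the paper takes, not a departure from its method.
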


\begin{proof}
The statements about the module structure are obvious.
And for 1-parameter families of
vector fields $v_t=(a_t,b_t)$ (for $G=\cA$) or $(a_t,X_{B_t})$
(for $G=\cK$), $t\in [0,1]$, in the kernel of $\g _f$ we have
$0=\int _0^1 \g _f(v_t)dt=\g _f (\int _0^1 v_t dt)$,
hence $\int _0^1 v_t dt\in LG_f$.
And it is clear that the $q$-component of $\int _0^1 v_t dt$
belongs to $LG_f^q$.
\end{proof}

We can now deduce from Proposition \ref{Omega-moduli}
and Corollary \ref{mod} the following

\begin{thm}\label{thm-moduli}
For all volume preserving subgroups $\VqG$ of $G$,
except for $\cVnA$, the dimension of
\[
\cM (\VqG ,f):= \frac{LG\cdot f}{L\VqG\cdot f}\cong
\frac{C_r}{\Div (LG_f^q)}
\]
is equal to the number of $\VqG$-moduli of $f$ and also
to the number of $G_f^q$-moduli of volume forms in
$(\bK ^q,0)$. (For $\cVnA$ the above statement holds
in the formal category, in the smooth category the
number of moduli is at least $\dim\cM (\cVnA ,f)$.)
\end{thm}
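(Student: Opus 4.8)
The plan is to combine the two isomorphisms already established in the excerpt with the dictionary between $\VqG$-equivalence of maps and $G_f^q$-equivalence of volume forms. First I would note that Proposition \ref{Omega-moduli} gives the vector-space isomorphism $\cM (\VqG ,f)\cong C_r/\Div (LG_f^q)$ unconditionally, so the content of the theorem is the interpretation of $\dim_\bK C_r/\Div (LG_f^q)$ as a count of \emph{moduli} (i.e.\ of genuinely distinct orbits, not merely a tangent-space dimension), and the identification of this count both with $\VqG$-moduli of $f$ and with $G_f^q$-moduli of volume forms. The key device is the equivalence, stated in the introduction, between $\VqG$-equivalence of $f$ and $g$ (for a fixed $\Omega_q$) and $G_f^q$-equivalence of volume forms $\Omega_q,\Omega_q'$ (for a fixed $f$): this lets me transport the moduli-counting problem entirely into the setting of Section \ref{iso}.

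\emph{Main steps.} First I would invoke Lemma \ref{submodule} to check that the hypotheses of Theorem \ref{2-sides} are met for $H=G_f^q$: namely that $LG_f^q$ is a submodule of $\theta_q$ \emph{and} is closed under integration. Lemma \ref{submodule} supplies exactly both facts. This is where the exceptional status of $\cVnA$ enters: for $G=\cA$ the relevant module structure on $LG_f$ is over $C_r=C_p$, so the source projection $L\cA_f^n\subset\theta_n$ need not be a $C_n$-module, and the integration/module argument of Theorem \ref{2-sides} cannot be run in the smooth category. I would therefore treat all groups except $\cVnA$ uniformly, and then handle $\cVnA$ separately. With Theorem \ref{2-sides} in hand, an $H$-isotopy between $\Omega_q$ and $\Omega_q'$ exists if and only if they are $LG_f^q$-equivalent, so the $G_f^q$-moduli of volume forms are precisely counted by $\mathrm{Vol}/\!\sim_{LG_f^q}$. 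Next I would apply Corollary \ref{mod}, which identifies this number of $H$-moduli of volume forms with $\dim_\bK C_q/\Div(LG_f^q)$. Matching $C_q$ with $C_r$ (they coincide in every case except $G_f^q=\cK_f^p$, where the bookkeeping $r=n$ is built into Proposition \ref{Omega-moduli}) then closes the chain: $\VqG$-moduli of $f=G_f^q$-moduli of volume forms $=\dim_\bK C_q/\Div(LG_f^q)=\dim_\bK C_r/\Div(LG_f^q)=\dim\cM(\VqG,f)$.

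\emph{The $\cVnA$ case.} For $\cVnA$ the module $L\cA_f^n$ is only an $\bR$- or $\bC$-vector space, not a $C_n$-module, so Theorem \ref{2-sides} fails and I can only assert the ``only if'' direction coming from Theorem \ref{H-Moser}: $LG_f^n$-equivalent volume forms are still $H$-isotopic. Thus every $\VqG$-modulus detected by $C_n/\Div(LG_f^n)$ is genuine, but there may be fewer orbits than the dimension count suggests—so in the smooth category $\dim\cM(\cVnA,f)$ is only a lower bound. In the formal category, where integration of a formal family of vector fields presents no convergence or module-membership obstruction, the equivalence is recovered and equality holds; I would state this as the parenthetical refinement.

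\emph{Expected main obstacle.} The genuinely delicate point is not the algebra but the passage from infinitesimal equivalence to actual isotopy, which rests entirely on the closure-under-integration hypothesis of Theorem \ref{2-sides}. Verifying that $LG_f^q$ is closed under integration—and pinpointing exactly why this closure is available for all groups \emph{but} $\cVnA$—is the crux; once that is isolated, the remainder is a formal splicing of Proposition \ref{Omega-moduli}, Theorem \ref{2-sides}, and Corollary \ref{mod}.
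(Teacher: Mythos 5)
Your treatment of all groups other than $\cVnA$ is essentially the paper's own proof: Proposition \ref{Omega-moduli} for the vector-space isomorphism, Lemma \ref{submodule} to verify that $LG_f^q$ is a $C_r$-module closed under integration, and then Theorem \ref{2-sides} and Corollary \ref{mod} to convert the infinitesimal count into a genuine count of moduli of volume forms, hence (via the dictionary between $\VqG$-equivalence of maps and $G_f^q$-equivalence of volume forms) of $\VqG$-moduli of $f$. That part is correct and follows the same route as the paper.

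The $\cVnA$ paragraph, however, contains a genuine error. You claim that for $H=\cA_f^n$ one can still assert the ``only if'' direction coming from Theorem \ref{H-Moser}, i.e.\ that $L\cA_f^n$-equivalent volume forms are $H$-isotopic. But Theorem \ref{H-Moser} is precisely the implication that \emph{requires} the module structure: Moser's method produces $Y_t=(1/g_t)X$ and one needs $Y_t\in M$, which holds only because $M$ is a $C_q$-module (even Definition \ref{M-eq} needs $M$ to be a module to be independent of the choice of $\Omega$). Since $L\cA_f^n$ carries only a $C_p$-module structure via $f^{\ast}$ and is not a $C_n$-module, this is exactly the direction that breaks down. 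Moreover, even granting your claim, the conclusion you draw from it is reversed: ``$LH$-equivalent $\Rightarrow$ isotopic'' would show that the number of isotopy classes is at \emph{most} $\dim C_n/\Div(L\cA_f^n)$, not that every infinitesimally detected modulus is genuine; and your closing sentence (``there may be fewer orbits than the dimension count suggests --- so $\dim\cM(\cVnA,f)$ is only a lower bound'') is self-contradictory. The correct picture is the opposite: because the Moser integration step is unavailable, infinitesimally equivalent volume forms need not be isotopic, so there may be \emph{more} $\cVnA$-orbits (more moduli) than the dimension predicts. The lower bound itself follows from the elementary fact that the actual tangent space to the $\cVnA$-orbit of $f$ is contained in $L\cVnA\cdot f$, so the infinitesimal codimension computed in Proposition \ref{Omega-moduli} (a pure vector-space statement, as the paper notes) bounds the number of moduli from below; in the formal category one integrates degree by degree and recovers equality.
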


\begin{proof}
In all cases, except $L\cA _f^n$, the component $LG_f^q$ of $LG_f$
is a module over the ring $C_r$ appearing as the target of the map
$\Div :\cM _q\theta _q\to C_r$. And $LG_f^q$ is closed under
integration, by the above lemma, hence Corollary \ref{mod}
applies. For $L\cA _f^n$ we notice that Proposition
\ref{Omega-moduli} is a statement about vector spaces (a $C_r$
module structure is not required).
\end{proof}

\begin{remark}\label{modality}
At this point it is perhaps useful to briefly recall the following.
The $G$-modality of a map-germ $f$ is, roughly speaking, the
least $m$ such that a small neighborhood of $f$ can be covered
by a finite number of $m$-parameter families of $G$-orbits.
(More precisely, we consider the $j^k(G)$-orbits in some neighborhood
of $j^kf$ in a finite-dimensional jet-space $J^k(n,p)$
for some $k$ for which all these $j^k(G)$-orbits are $G$-sufficient
-- recall that the $G$-determinacy degree of $f$ in general fails
to be upper semicontinuous under deformations of $f$, see
\cite{Wa} for a survey of results on $G$-determinacy.)
Map-germs $f$ of $G$-modality $0,1,2,\ldots$ are said to be
$G$-simple, $G$-unimodal, $G$-bimodal and so on. An
$m$-$G$-modal family depends on no more than $m$ parameters (moduli),
for $G=\cR$ and function-germs it depends on exactly $m$ moduli
\cite{Gabr}. For a subgroup $\VqG$ of a Mather group $G$
and an $m$-parameter family of map-germs $f^{\lambda}$
the dimension of $\cM (\VqG ,f^{\lambda})$ is equal to the
number of $\VqG$-moduli of $f^{\lambda}$, and also
to the number of $G_{f^{\lambda}}^q$-moduli of volume forms in
$(\bK ^q,0)$, for each fixed vector $\lambda\in\bK ^m$ of
$G$-moduli of $f^{\lambda}$.
\end{remark}

We are now interested in classes of map-germs $f$
for which the moduli spaces $\cM (\VqG ,f)$
vanish. For the groups $\VqG =\cVpA$, $\cVnK$
and $\cVpK$
such classes of maps are given by the following
weak forms of quasihomogeneity.

\begin{defn}\label{def-wqh}
A map-germ $f:(\bK ^n,0)\to (\bK ^p,0)$,
which is q.h. for weights $w_i\in\bZ$ ($1\le i\le n$)
and weighted degrees $\de _j$ ($1\le j\le p$), is said to
be {\em weakly quasihomogeneous} (w.q.h.)
for the group $\VqG$ if the following
conditions hold.
\begin{itemize}
\item For $\VqG =\cVpA$: all $\de _j\ge 0$ and $\sum _j \de _j>0$.
\item For $\VqG =\cVnK$: all $w _i\ge 0$ and $\sum _i w_i>0$.
\item For $\VqG =\cVpK$: $\sum _j \de _j\neq 0$.
\end{itemize}
\end{defn}

\begin{rem}
(i) The condition w.q.h. depends on the group $\VqG$, when the
group is clear from the context we will simply say that $f$ is
w.q.h.

(ii) For any subgroup $\VqG\neq \cVnA$ of $G$ we have the following
{\em ``trivial versions of w.q.h''} for $f$:
(1) for $q=p$ and $f$
$G$-equivalent to some map-germ having a zero
component function, and (2) for $q=n$ and $df(0)$ of positive
rank. For $\VqG =\cVpA$, $\cVnK$ and $\cVpK$ it is easy to see
that ``trivially w.q.h.'' is a special case of w.q.h.:
for (1) we give the zero component function positive weighted
degree (and set all weights $w_i$ or all other degrees $\de _j$
to zero), and for (2) we have (up to $G$-equivalence)
$f=(x_1,g(x_2,\ldots ,x_n))$, so we take $w_1=1$ and $w_i=0$, $i>1$.
\end{rem}

We then have the following

\begin{prop}\label{inf-wqh}
Let $f$ be w.q.h. for one of the groups
$\VqG =\cVpA$, $\cVnK$ or $\cVpK$ (or ``trivially w.q.h.''
for any group). Then $\cM(\VqG ,f)=0$.
\end{prop}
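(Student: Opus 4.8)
The plan is to reduce every case to the surjectivity of a divergence map, using the isomorphism $\cM (\VqG ,f)\cong C_r/\Div (LG_f^q)$ of Proposition \ref{Omega-moduli}, and to produce the vector field forcing this surjectivity out of the quasihomogeneity of $f$. First I would record the Euler relation. Writing $E_{w}=\sum _iw_ix_i\,\partial /\partial x_i$, $E_{\de}=\sum _j\de _jy_j\,\partial /\partial y_j$ and $B_{\de}={\rm diag}(\de _1,\ldots ,\de _p)$, differentiation of $f_j(s^{w_1}x_1,\ldots ,s^{w_n}x_n)=s^{\de _j}f_j(x)$ in $s$ at $s=1$ gives $tf(E_{w})=E_{\de}\circ f=B_{\de}\cdot f$. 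Hence $(E_{w},E_{\de})\in\ker\g _f=L\cA _f$ for $G=\cA$ and $(E_{w},B_{\de})\in\ker\g _f=L\cK _f$ for $G=\cK$; projecting onto the factors, $E_{w}\in LG_f^n$ and $E_{\de}\in LG_f^p$.

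For $\cVpA$ I would take the target Euler field $E_{\de}\in L\cA _f^p$. The hypothesis $\de _j\ge 0$, $\sum _j\de _j>0$ says exactly that $E_{\de}$ is a generalized Euler vector field with non-negative weights and positive total weight, and by Lemma \ref{submodule} it generates a $C_p$-module inside $L\cA _f^p$. Proposition \ref{prEuler}, applied with $H=\cA _f^p$, then says that any two volume forms in $(\bK ^p,0)$ are $\cA _f^p$-isotopic, so the number of $\cA _f^p$-moduli of volume forms vanishes; by Theorem \ref{thm-moduli} (via Corollary \ref{mod}) this number equals $\dim\cM (\cVpA ,f)$, whence $\cM (\cVpA ,f)=0$. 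The case $\cVnK$ is the mirror image: now $E_{w}\in L\cK _f^n$, the hypothesis $w_i\ge 0$, $\sum _iw_i>0$ makes it a generalized Euler field of the required kind, and Proposition \ref{prEuler} with $H=\cK _f^n$ yields $\cM (\cVnK ,f)=0$.

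For $\cVpK$ the relevant divergence lands in $C_n$ rather than $C_p$, so I would argue directly with $\cM (\cVpK ,f)\cong C_n/\Div (L\cK _f^p)$. Here the constant matrix $B_{\de}$ has associated linear field $X_{B_{\de}}=E_{\de}\in L\cK _f^p$, and $\Div (X_{B_{\de}})=\trace B_{\de}=\sum _j\de _j$, which the hypothesis $\sum _j\de _j\neq 0$ makes a unit in $C_n$. Since $\Div$ restricted to the $C_n$-module $L\cK _f^p$ is $C_n$-linear, its image is an ideal containing a unit, hence all of $C_n$, and $\cM (\cVpK ,f)=0$.

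Finally I would treat the trivially w.q.h. cases. For a zero component function ($q=p$) I assign it weighted degree $1$ and set all other weights and degrees to $0$, so that $E_{\de}=y_p\,\partial /\partial y_p$; this lies in $LG_f^p$ for each target group, since $\om f(c\,y_p\,\partial /\partial y_p)=(c\circ f)f_p\,\partial /\partial y_p=0$ as $f_p\equiv 0$, and the arguments above apply (for $\cVpK$ one uses instead $\Div (y_p\,\partial /\partial y_p)=1$, a unit, as in the third paragraph). For $df(0)$ of positive rank ($q=n$) I use, up to $G$-equivalence, the normal form $f=(x_1,g(x_2,\ldots ,x_n))$: for the $\cK$-based groups $E_{w}=x_1\,\partial /\partial x_1\in L\cK _f^n$ by the Euler relation and Proposition \ref{prEuler} applies, while for the remaining source groups a non-vanishing vector field in $\ker df$ (a coordinate absent from $f$) generates a module $\cM _n X\subseteq LG_f^n$ and Proposition \ref{non-zero} applies. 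I expect the only genuinely delicate point throughout to be the verification that the chosen Euler or transverse field really lies in $LG_f^q$: for the w.q.h. cases this is precisely the Euler relation placing $(E_{w},E_{\de})$ (or $(E_{w},B_{\de})$) in $\ker\g _f$, and for the trivial cases it is the vanishing of $f_p$ (respectively the presence of a free coordinate) together with checking the module-closure and weight hypotheses needed to invoke Propositions \ref{prEuler} and \ref{non-zero}.
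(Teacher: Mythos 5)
Your proof is correct for all the cases the paper actually asserts, but it takes a genuinely different route from the paper's own argument. The paper proves Proposition \ref{inf-wqh} by a direct computation in the tangent space to the orbit: for each group it exhibits explicit divergence-free (resp.\ traceless) elements of $L\VqG$ which, combined with the image under $\g _f$ of the Euler pair $(E_w,E_{\de})$ multiplied by $f^{\ast}(y^{\al})$ (resp.\ $x^{\al}$), generate all of ${\om}f(y^{\al}y_i\,\partial /\partial y_i)$, resp.\ $tf(x^{\al}x_i\,\partial /\partial x_i)$, resp.\ $E_{ii}\cdot f$; the w.q.h.\ hypothesis enters as the non-vanishing of $\sum _j(1+\al _j)\de _j$ (resp.\ $\sum _j(1+\al _j)w_j$, resp.\ $\sum _j\de _j$) for every exponent vector $\al$. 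You instead pass through the isomorphism $\cM (\VqG ,f)\cong C_r/\Div (LG_f^q)$ of Proposition \ref{Omega-moduli} and prove surjectivity of $\Div$ on the stabilizer algebra by placing the Euler field in $LG_f^q$ via the Euler relation and invoking Proposition \ref{prEuler} (or, for $\cVpK$, the unit $\trace B_{\de}=\sum _j\de _j$ together with the $C_n$-module structure). This is essentially the mechanism the paper reserves for the orbit-level Theorem \ref{orb-wqh}, applied here at the infinitesimal level; it gives a shorter and more conceptual proof, at the price of relying on the integral construction in Proposition \ref{prEuler}, whereas the paper's computation is purely algebraic and hence works degree by degree in the formal category as well.

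One clause needs a caveat. In the trivially w.q.h.\ case (2) you propose, ``for the remaining source groups'', to use a non-vanishing vector field in $\ker df$ and Proposition \ref{non-zero}. For $f=(x_1,g(x_2,\ldots ,x_n))$ of positive rank there need be no coordinate absent from $f$ and no non-vanishing section of $\ker df$ near the origin (for $f$ a local diffeomorphism $\ker df=0$, and indeed $\cM (\cVnR ,f)$ does not vanish for such $f$). This does not affect the proposition, since the trivial version (2) is only claimed for $\cVnK$ and $\cVpK$ (see the introduction), and those are covered by your Euler-relation argument $x_1\,\partial /\partial x_1\in L\cK _f^n$; but the extra clause should be dropped.
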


\begin{proof}
We will show that
$LG ^q\cdot f\subset L\VqG\cdot f$
(here $LG^q\cdot f$ is one of the factors of $LG\cdot f$),
so that $L\VqG\cdot f = LG\cdot f$.

For $\VqG =\cVpA$ we have to show that
$L\cL\cdot f\subset L\cVpA\cdot f$. Clearly it
is enough to check this inclusion for the elements
of $L\cL\cdot f$ that do not belong to $L\cVpL\cdot f$.
Let $y^{\al}=\prod _l y_l^{\al _l}$ and $| \al | \ge 0$.
The following elements of $L\cVpA\cdot f$ yield
${\om}f(y^{\al}y_i\cdot
\partial /\partial y_i)\in L\cL\cdot f$, $i=1,\ldots, p$:
\[
{\om}f(-(1+\al _j)y_1y^{\al }\cdot \partial /\partial y_1 +(1+\al
_1)y_jy^{\al}\cdot \partial /\partial y_j),~~~j=2,\ldots ,p
\]
together with
\[
tf(f^{\ast}(y^{\al})\sum _{i=1}^n w_ix_i\cdot \partial /\partial
x_i) -\sum _{j=2}^p \de _j\cdot {\om}f
\Big( -\frac{1+\al _j}{1+\al
_1}y^{\al}y_1 \cdot \partial /\partial y_1 +y^{\al}y_j\cdot
\partial /\partial y_j\Big)
\]
\[
=(1+\al _1)^{-1}\sum _{j=1}^p (1+\al _j)\de_j \cdot
{\om}f(y^{\al}y_1\cdot
\partial /\partial y_1) .
\]
Notice that $\sum _j (1+\al _j)\de _j\ne 0$, for any exponent
vector $\al $, is equivalent to $f$ being w.q.h. for the
group $\cVpA$.

For $\VqG =\cVnK$ we have to show that
$L\cR\cdot f\subset L\cVnK\cdot f$. Exchanging the roles
of the source and target vector fields, we see that
the following elements of $L\cVnK\cdot f$ yield
$tf(x^{\al}x_i\cdot
\partial /\partial x_i)\in L\cR\cdot f$, $i=1,\ldots, n$:
\[
tf(-(1+\al _j)x_1x^{\al }\cdot \partial /\partial x_1 +(1+\al
_1)x_jx^{\al}\cdot \partial /\partial x_j),~~~j=2,\ldots ,n
\]
together with
\[
x^{\al}\sum _{i=1}^n\de _i f_i\cdot\partial /\partial y_i
-\sum _{j=2}^n tf\Big( w_j \Big(
-\frac{1+\al _j}{1+\al _1}x_1x^{\al} \cdot \partial /\partial x_1
+x_j x^{\al}\cdot \partial /\partial x_j\Big)\Big)
\]
\[
=(1+\al _1)^{-1}\sum _{j=1}^n (1+\al _j)w_j \cdot
tf(x_1x^{\al}\cdot\partial /\partial x_1) .
\]
Notice that $\sum _j (1+\al _j)w_j\ne 0$, for any exponent
vector $\al $, is equivalent to $f$ being w.q.h. for the
group $\cVnK$.

For $\VqG =\cVpK$ we have to show that
$L\cC\cdot f\subset L\cVpK\cdot f$. Notice that
$L\cVpC =sl_p(C_n)$ consists of elements $B$ of $gl_p(C_n)$
with trace 0, hence we have a $C_n$-module structure.
Therefore, if $E_{ij}$ denotes a $p\times p$ matrix
with entry $(i,j)$ equal to 1 and all other entries 0
then it is enough to show that
$E_{ii}\cdot f\in L\cVpK\cdot f$, for $i=1,\ldots ,p$.
(Notice that this implies that $L\cC\cdot f\subset L\cVpK\cdot f$,
both for the linearized version $GL_p(C_n)$ of $\cC$
and for Mather's original $\cC$, because of the $C_n$-module
structure.)
Taking for $j=2,\ldots ,p$
\[
-f_1\cdot\partial /\partial y_1+f_j\cdot\partial /\partial y_j
\]
(corresponding to $(E_{jj}-E_{11})\cdot f$ with
$(E_{jj}-E_{11})\in L\cVpC$) and
\[
tf\big( \sum _{i=1}^n w_ix_i\cdot\partial /\partial x_i\big)
-\big( (-\de _2-\ldots -\de _p)E_{11}+\de _2E_{22}+\ldots
+\de _pE_{pp}\big)\cdot f
\]
\[
= \sum _{j=1}^p\de _jf_1\cdot\partial /\partial y_1
\]
we see that
$E_{ii}\cdot f\in L\cVpK\cdot f$ ($i=1,\ldots ,p$)
provided that $\sum _j \de _j\neq 0$.

Finally, by the remark in the introduction, there is nothing
to prove in the ``trivially w.q.h. cases'' (arbitrary
diffeomorphisms in a proper subspace can be extended to volume
preserving diffeomorphisms of the total space $(\bK ^q,0)$).
\end{proof}

The proposition says that, at the infinitesimal level,
the tangent spaces of the $G$-orbit and of the
$\VqG$-orbit of $f$ coincide.
For $\bK =\bR$ let $G^+$ be the
subgroup of $G$ for which the diffeomorphisms of
the $G^q$ factor of $G$ are orientation preserving.
We then have at the level of orbits the following

\begin{thm}\label{orb-wqh}
Let $f:(\bK ^n,0)\to (\bK ^p,0)$ be w.q.h. for one of
the groups
$\VqG =\cVpA$, $\cVnK$ or $\cVpK$ (or ``trivially w.q.h.''
for any group) then:

(i) any two volume forms $\Omega$, $\Omega '$
on $\bK ^q$ (so that, in the case of $\bK =\bR$,
$\Omega |_0$ and $\Omega '|_0$ define the same
orientation in $T_0\bR ^q$)
are $G_f^q$-isotopic.

(ii) $f'\sim _G f$ (for $\bK =\bC$) and $f'\sim _{G^+} f$ (for
$\bK =\bR$) imply $f'\sim _{\VqG} f$ (for some given volume form
$\Omega _q$ on $\bK ^q$).
\end{thm}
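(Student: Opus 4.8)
The plan is to deduce (ii) from (i), and to prove (i) by exhibiting inside $LG_f^q$ a generalized Euler vector field to which Proposition \ref{prEuler} (or, for $\cVpK$, a variant permitting negative weights) applies.

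First I would prove (i). Since $f$ is q.h. for weights $w_i$ and degrees $\de_j$, differentiating $f(s^{w_1}x_1,\dots,s^{w_n}x_n)=(s^{\de_1}f_1,\dots,s^{\de_p}f_p)$ at $s=1$ shows that the source and target Euler fields $E_w=\sum_i w_ix_i\,\partial/\partial x_i$ and $E_{\de}=\sum_j\de_jy_j\,\partial/\partial y_j$ satisfy $tf(E_w)=\om f(E_{\de})=E_{\de}\circ f$, i.e. they are $f$-related. Hence $(E_w,E_{\de})\in LG_f$, so $E_{\de}\in L\cA_f^p$ (for $G=\cA$) and, writing $E_{\de}=X_B$ with $B=\mathrm{diag}(\de_1,\dots,\de_p)$, also $E_{\de}\in L\cK_f^p$ and $E_w\in L\cK_f^n$ (for $G=\cK$). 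By Lemma \ref{submodule} each projection $LG_f^q$ is a $C_r$-module closed under integration, so the relevant Euler field generates a submodule of $LG_f^q$. For $\VqG=\cVpA$ we have $q=p=r$ and $E_{\de}$ has weights $\de_j\ge0$ with $\sum_j\de_j>0$; for $\VqG=\cVnK$ we have $q=n=r$ and $E_w$ has $w_i\ge0$ with $\sum_iw_i>0$. In both cases Proposition \ref{prEuler}, applied with $H=G_f^q$ and the Euler field as generator, shows that any two volume forms on $(\bK^q,0)$ (of the same orientation when $\bK=\bR$) are $G_f^q$-isotopic. The trivially w.q.h. cases for these groups are, by the remark following Definition \ref{def-wqh}, instances of w.q.h. (with an Euler field such as $x_1\,\partial/\partial x_1$) and are covered by the same argument; for the remaining groups $\VqG$, where only the trivial notion is defined, one instead produces a non-vanishing $X$ with $\cM_q X\subset LG_f^q$ (reflecting that diffeomorphisms of a proper submanifold of $(\bK^q,0)$ extend to volume preserving ones) and invokes Proposition \ref{non-zero}.

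The case $\VqG=\cVpK$ is where the main difficulty lies, and is the reason for the negative coefficients announced before Proposition \ref{prEuler}. Here $q=p$ but $r=n$, the field $E_{\de}=X_B$ is linear in $y$, its weights $\de_j$ may be negative, and only $\sum_j\de_j\ne0$ is assumed; moreover $L\cK_f^p$ is a $C_n$-module and $\Div X_B=\trace B\in C_n$, so the relevant volume forms are best viewed as $x$-families $\varphi_x^{\ast}\Om_p$ whose ratio is a function of $x$. The plan is to rerun the integration in the proof of Proposition \ref{prEuler} with the explicit linear flow $G_t(x,y)=(x,e^{\de_1t}y_1,\dots,e^{\de_pt}y_p)$, which scales $dy_1\wedge\cdots\wedge dy_p$ fibrewise by $e^{t\sum_j\de_j}$; choosing the time direction in which $t\sum_j\de_j\to-\infty$ makes the relevant integral converge even though some $y$-directions expand, because the integrand depends only on the (contracting) total volume factor. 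This yields, as before, the required $\cK_f^p$-isotopy. Verifying convergence of this integral, and that the linear flow stays inside $L\cK_f^p$, is the step I expect to be the most delicate.

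Finally I would deduce (ii). Suppose $f'\sim_Gf$ over $\bC$, or $f'\sim_{G^+}f$ over $\bR$; choose $g\in G$ (respectively $g\in G^+$) with $g\cdot f=f'$, let $g^q$ be its $q$-factor, and put $\Om'_q:=(g^q)^{\ast}\Om_q$. Over $\bR$ the element $g\in G^+$ has orientation preserving $g^q$, so $\Om'_q$ and $\Om_q$ define the same orientation at $0$ and part (i) applies: there is $h\in G_f^q$ with $h^{\ast}\Om'_q=\Om_q$ (Definition \ref{IH-diff}). Then $(g^q\circ h)^{\ast}\Om_q=h^{\ast}(g^q)^{\ast}\Om_q=h^{\ast}\Om'_q=\Om_q$, so $g^q\circ h$ preserves $\Om_q$. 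Since $G_f^q$ is the $q$-factor of the stabilizer $G_f$, lift $h$ to some $\tilde h\in G_f$ with $q$-factor $h$; then $\tilde h\cdot f=f$ and the $q$-factor of $g\tilde h$ equals $g^q\circ h$, whence $g\tilde h\in\VqG$. As $(g\tilde h)\cdot f=g\cdot(\tilde h\cdot f)=g\cdot f=f'$, we conclude $f'\sim_{\VqG}f$, as required.
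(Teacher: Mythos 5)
Your handling of $\cVpA$ and $\cVnK$ (the Euler fields $E_w$, $E_{\de}$ are $f$-related, hence lie in $LG_f^q$, generate $C_r$-submodules there by Lemma \ref{submodule}, and Proposition \ref{prEuler} applies), of the trivially w.q.h.\ cases, and your careful lifting argument deducing (ii) from (i) all agree with the paper's proof (which dismisses the (i)$\Rightarrow$(ii) step as clear, so spelling it out is a gain). The gap is in the $\cVpK$ case, which is precisely where the paper has to do genuine extra work.

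Concretely: for a general volume form $\Om _p=h(y)\,dy_1\wedge\cdots\wedge dy_p$ with $h$ actually depending on $y$, your proposed integral of $e^{t\sum _j\de _j}\,h(G_t(x,y))$ over the linear flow $G_t$ is not defined when the $\de _j$ have mixed signs (which the hypothesis $\sum _j\de _j\neq 0$ permits, e.g.\ $\de =(2,-1)$): in the directions whose weight has the ``wrong'' sign, $G_t(x,y)$ leaves every neighbourhood of the origin as $t$ tends to the chosen infinity, so $h\circ G_t$ makes no sense as a germ, and your justification that ``the integrand depends only on the total volume factor'' is simply false unless $h$ is independent of $y$. The missing idea is the paper's preliminary reduction: every $\Om _p$ is $\cK _f^p$-equivalent to a \emph{parameterized linear} volume form $g(x)\,dy_1\wedge\cdots\wedge dy_p$ with $g\in C_n$, $g(0)\neq 0$. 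This is done by taking $\Psi$ with $\Psi ^{\ast}\Om _p=dy_1\wedge\cdots\wedge dy_p$, writing $\Psi ^{-1}(y)=(\sum _i\phi _{1i}(y)y_i,\ldots ,\sum _i\phi _{pi}(y)y_i)$, and substituting $f(x)$ for $y$ in the coefficients to get a family $\Phi _x$ with $\Psi\circ\Phi _x\in\cK _f^p$ and $\Phi _x^{\ast}\Psi ^{\ast}\Om _p=\det (d\Phi _x)\,dy_1\wedge\cdots\wedge dy_p$. After this reduction no flow and no convergence argument is needed at all: since $\sum _j\de _j\neq 0$ one has the exact algebraic identity $g(x)\,dy_1\wedge\cdots\wedge dy_p=d\bigl((g(x)/\sum _j\de _j)\,E_{\de}\rfloor dy_1\wedge\cdots\wedge dy_p\bigr)$, whose primitive is contracted against an element of the $C_n$-module generated by $E_{\de}$ in $L\cK _f^p$, and Theorem \ref{H-Moser} then gives the $\cK _f^p$-isotopy. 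You should replace the integration over the (divergent) linear flow by this reduction plus the explicit primitive.
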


\begin{proof}
Using the weights $w_i$ (for $q=n$) or weighted degrees
$\de _j$ (for $q=p$) in the definition of a
$\VqG$-w.q.h. map $f$ we can define generalized Euler vector
fields in $\bC ^q$. For $\VqG =\cVpA$ and $\cVnK$ the vector
fields have non-negative coefficients, hence Proposition \ref{prEuler}
implies statement (i).  For $\cVpK$ we can have negative coefficients
and we deduce statement (i) by a slightly modified argument (see below).
The equivalence of (i) and (ii) is clear (over $\bC$ the $G$-orbits
are connected).

For $\cK_{\Omega_p}$-equivalence the weighted degrees $\delta _i$
of $f$ yield a generalized Euler vector field $E_\delta
=\sum_{i=1}^p \delta_i y_i \partial /\partial y_i$ in $(\mathbb
K^p ,0)$.  We first claim that any volume form $\Omega_p$ is
$\cK^p_f$-equivalent to some linear volume form $g(x)dy_1\wedge
\cdots dy_p$ parameterized by $g\in C_n$ with $g(0)\neq 0$. Let
$\Psi$ be an origin-preserving diffeomorphism of $(\mathbb K^p,0)$
such that, for $\Omega _p=h(y)dy_1\wedge\cdots \wedge dy_p$, we
have $\Psi^{\ast}\Omega_p=dy_1\wedge\cdots \wedge dy_p$. Its
inverse has the form
$$
\Psi^{-1}(y)=(\sum_{i=1}^p\phi_{1i}(y)y_i,\cdots,
\sum_{i=1}^p\phi_{pi}(y)y_i).
$$
We have $\Psi ^{-1}\circ f(x)=\Phi _x\circ f(x)$ for the following
family $\Phi _x$ of diffeomorphisms of $(\mathbb K^p,0)$
parameterized by $x\in (\mathbb K^n,0)$
$$
\Phi_x(y)=(\sum_{i=1}^p\phi_{1i}(f(x))y_i,\cdots,
\sum_{i=1}^p\phi_{pi}(f(x))y_i).
$$
 Hence $\Psi\circ\Phi
_x\circ f=f$ (i.e., $\Psi\circ\Phi _x\in\cK _f^p$) and
$\Phi_x^{\ast}\Psi^{\ast}\Omega_p=g(x)dy_1\wedge \cdots \wedge
dy_p$, where $g(x)=\det (d\Phi _x )$. Clearly $g(0)\neq 0$, which
implies the above claim.

It is therefore sufficient to consider the equivalence of
parameterized linear volume forms. Notice that
$E_\delta$ generates a $C_n$-submodule of $L\cK _f^p$ and
$$
g(x)dy_1\wedge \cdots \wedge dy_p= d\left(\frac{g(x)}{\sum_{i=1}^p
\delta_i}E_{\delta}\rfloor dy_1\wedge \cdots \wedge dy_p\right)
$$
(recall that $\sum_{i=1}^p \delta_i\ne 0$), hence any pair of
such volume forms is $L\cK _f^p$-equivalent. Furthermore, by the
argument in the proof of Theorem \ref{H-Moser}, such a pair of
volume forms (which, for $\mathbb K=\mathbb R$, is required to
define the same orientation) is $\cK^p_f$-isotopic.
\end{proof}

``Non-trivial applications'' of the above result -- namely to
weakly quasihomogeneous map-germs $f$ that are neither
quasihomogeneous nor trivially weakly quasihomogeneous --
will be considered later.  For quasihomogeneous and
trivially quasihomogeneous germs $f$ we have the following
immediate applications.

\begin{rem}\label{triv-applic}
(1) Quasihomogeneous case: all $\cA$-stable and all $\cK$-simple
map-germs $f$ are quasihomogeneous. Hence the classifications,
over $\bC$, of stable germs for $\cA$ and $\cVpA$ and of simple germs for
$\cK$, $\cVnK$ and $\cVpK$ agree -- over $\bR$, each $\cA$-stable
or $\cK$-simple orbit corresponds to one or two stable or simple orbits
for the volume preserving subgroups.

(2) Trivially weakly quasihomogeneous case: (i) the classifications
of map-germs $f$, with $df(0)$ of positive rank, for the groups
$\cK$, $\cVnK$ and $\cVpK$ agree. (ii) For map-germs
$f:(\bK ^n,0)\to (\bK ^p,0)$ whose image lies in a proper
submanifold of $(\bK ^p,0)$ (such $f$ have, up to a target coordinate
change, a zero component function) the $\cA$- and $\cVpA$-orbits,
the $\cL$- and $\cVpL$-orbits, and the $\cC$- and $\cVpC$-orbits
agree. Notice, for example, that the $\cA$ and $\cVpA$ classifications
of simple curve-germs agree for $p\ge 7$ (Arnol'd
\cite{ACurves} has shown
that all stably simple curves can be realized in 6-space,
hence all $\cA$-simple curves in higher dimensions have
zero component functions).
\end{rem}

\section{A cohomological description of $\cM (\VqG ,f)$
and some finiteness results}\label{cohom}

The results on $\cM (\cVnK ,f)$ can be reformulated
for ideals, and for this reformulation we obtain a
further isomorphism in terms of cohomology.
This cohomological description yields some finiteness
results in the non-w.q.h. case.
Let $\cI\subset C_n$ be a finitely generated ideal
(recall: for $C_n=\cO _n$ all ideals are f.g., for
$C_n=\cE _n$, the ring of $C^{\infty}$ function germs,
there are non-f.g. ideals like $\cM _n^{\infty}$).

We say that $\cI$ and $\cJ$
are $\cD _n$-equivalent if and only if there is
a diffeomorphism germ $\phi\in\cD _n$ such that
$\phi ^{\ast}\cI =\cJ$. The stabilizer of $\cI$
is $(\cD _n)_{\cI}=\{ \phi :\phi ^{\ast}\cI =\cI \}$,
and
\[
L(\cD _n)_{\cI}=\Derlog (\cI )=\{ Y\in\theta _n : Y\cI \subset
\cI\} ,
\]
where, for $h\in\cI$, we set $Yh :=dh\cdot Y$.
For
$\cI =\langle g_1,\ldots ,g_p\rangle$ and
$f:=(g_1,\ldots ,g_p):(\bK ^n,0)\to (\bK ^p,0)$
we have the following:
\[
\phi^{\ast}\langle g_1,\ldots ,g_p\rangle
:=\langle g_1\circ\phi ,\ldots ,g_p\circ\phi\rangle =
\langle g_1,\ldots ,g_p\rangle
\]
if and only if $f=B\cdot (f\circ\phi )$ for some
$B\in {\rm GL}_p(C_n)$. Hence
$\Derlog (\cI )=L\cK _f^n$,
and setting
$\cD _{\Omega _n} :=\{ h\in\cD _n: h^{\ast}\Omega _n=\Omega _n\}$
(for a given volume form $\Omega _n$ in $(\bK ^n,0)$) we
have the following isomorphisms for the (infinitesimal)
$\cD _{\Omega _n}$-moduli space of $\cI$:
\[
\cM (\cD _{\Omega _n},\cI):=\frac{C_n}{\Div (\Derlog(\cI ))}\cong
\frac{C_n}{\Div (L\cK _f^n)}\cong
\frac{L\cK\cdot f}{L\cVnK\cdot f}.
\]
This moduli space is also isomorphic to the $n$th cohomology group
of the following complex $(\Lambda^\ast(\cI),d)$.
Defining
for $k=0,\ldots ,n$ the vector spaces
$$
\Lambda^k(\cI):= \{\alpha+d\beta\in \Lambda^k: d\cI\wedge \alpha
\subset \cI \Lambda^{k+1}, \ d\cI\wedge \beta \subset \cI
\Lambda^{k}\}
$$
we obtain a subcomplex $(\Lambda^\ast(\cI),d)$
of the de Rham complex $(\Lambda^{\ast},d)$.
Sometimes we shall simply write
$\Lambda^\ast(\cI)=(\Lambda^\ast(\cI),d)$ and similarly for
the other complexes defined below (the differential is always
the same $d$).

The $n$th cohomology group of the complex $(\Lambda^\ast(\cI),d)$ is
$$
H^n((\Lambda ^\ast (\cI),d)=\Lambda^n /
d\Lambda^{n-1}(\cI)=\Lambda^n / \{d\alpha \in \Lambda^n:
d\cI\wedge \alpha \subset \cI \Lambda^n\}.
$$
For a given volume form $\Omega_n$ the map
$$\Derlog(\cI)\ni X\mapsto
X\rfloor \Omega_n \in \{\alpha\in \Lambda^{n-1}:d\cI\wedge \alpha
\subset \cI \Lambda^n\}$$
is an isomorphism. Notice that the tangent space to
$\Lambda ^n$ can be identified with $C_n$, and recall that
$\Div_{\Omega _n}(X)=d(X\rfloor\Omega _n)/\Omega _n$.
Hence we see that
\[
H^n((\Lambda ^\ast (\cI),d))\cong C_n/\Div (\Derlog(\cI ))
\cong \cM (\cD _{\Omega _n},\cI).
\]
Furthermore, Theorem \ref{2-sides} implies the
following

\begin{prop}
Two volume forms (defining the same orientation) are $(\cD
_n)_{\cI}$-isotopic if and only if they define the same cohomology
class in $H^n((\Lambda^{\ast}(\cI),d))$.
\end{prop}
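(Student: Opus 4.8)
The plan is to reduce the statement to two facts already established just above it: the isotopy criterion of Theorem~\ref{2-sides}, and the isomorphism $X\mapsto X\rfloor\Omega_n$ identifying $\Derlog(\cI)$ with the space of $(n-1)$-forms $\{\alpha\in\Lambda^{n-1}:d\cI\wedge\alpha\subset\cI\Lambda^n\}$. Once both are in place, the proposition follows by matching definitions.

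First I would apply Theorem~\ref{2-sides} with $H=(\cD_n)_{\cI}$, so that $LH=\Derlog(\cI)$. This requires checking the two hypotheses of that theorem: that $LH$ is a submodule of $\theta_n$, and that it is closed under integration. The module structure is immediate from the definition of $\Derlog(\cI)$. For closure under integration I would invoke the identification $\Derlog(\cI)=L\cK_f^n$ (for $f=(g_1,\ldots,g_p)$ generating $\cI$) together with Lemma~\ref{submodule}, which states precisely that the factor $L\cK_f^n$ of $L\cK_f$ is closed under integration. Theorem~\ref{2-sides} then yields: two volume forms defining the same orientation are $(\cD_n)_{\cI}$-isotopic if and only if they are $\Derlog(\cI)$-equivalent in the sense of Definition~\ref{M-eq}.

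Next I would translate $\Derlog(\cI)$-equivalence into the cohomological condition. By Definition~\ref{M-eq}, $\Omega_0$ and $\Omega_1$ are $\Derlog(\cI)$-equivalent exactly when $\Omega_0-\Omega_1=d(X\rfloor\Omega_n)$ for some $X\in\Derlog(\cI)$, the choice of the fixed volume form $\Omega_n$ being immaterial by the remark following that definition. Passing through the isomorphism $X\mapsto X\rfloor\Omega_n$ onto $\{\alpha\in\Lambda^{n-1}:d\cI\wedge\alpha\subset\cI\Lambda^n\}$, this condition becomes $\Omega_0-\Omega_1=d\alpha$ with $\alpha$ allowed to range over that space of $(n-1)$-forms. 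But the formula
\[
H^n((\Lambda^{\ast}(\cI),d))=\Lambda^n/\{d\alpha\in\Lambda^n:d\cI\wedge\alpha\subset\cI\Lambda^n\}
\]
says exactly that this happens if and only if $\Omega_0$ and $\Omega_1$ represent the same class in $H^n((\Lambda^{\ast}(\cI),d))$. Chaining the two equivalences gives the proposition.

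I do not anticipate a serious obstacle: the argument is an assembly of results already proved, and the only genuine verification is that Theorem~\ref{2-sides} applies, i.e. that $\Derlog(\cI)$ is closed under integration, which Lemma~\ref{submodule} supplies through $\Derlog(\cI)=L\cK_f^n$. The one point deserving care is the bookkeeping around the volume form: Definition~\ref{M-eq} permits an arbitrary $\Omega$, whereas the isomorphism above fixes $\Omega_n$, so I would note explicitly that the two are compatible (via the $\Omega$-independence observed after Definition~\ref{M-eq}) before reading off the final equivalence.
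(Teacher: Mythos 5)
Your proposal is correct and follows essentially the same route as the paper, which derives the proposition directly from Theorem \ref{2-sides} using the identifications $\Derlog(\cI)=L\cK_f^n$ (giving closure under integration via Lemma \ref{submodule}) and $X\mapsto X\rfloor\Omega_n$ onto $\{\alpha\in\Lambda^{n-1}:d\cI\wedge\alpha\subset\cI\Lambda^n\}$. Your explicit attention to the independence of the choice of $\Omega$ in Definition \ref{M-eq} is a sensible bookkeeping point that the paper leaves implicit.
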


\begin{defn}We say that an ideal $\cI$ in $C_n$ is w.q.h. if it has a set
of generators $g_1,\ldots ,g_p$ such that the corresponding map
$f=(g_1,\ldots ,g_p)$ is $\cVnK$-w.q.h. (notice that this is a
natural generalization of homogeneous ideals).
\end{defn}
\begin{rem}
If the ideal $\cI$ is w.q.h. then the variety defined by $\cI$ is
``quasihomogeneous with respect to a smooth submanifold'' in the
sense of \cite{DJZ2}.
\end{rem}

We can now reformulate Theorem \ref{orb-wqh} as follows

\begin{thm}\label{I-orb-wqh}
Let $\cI$ be a w.q.h. ideal in $C_n=\cO _n$ or $\cE _n$.
For $C_n=\cE _n$ we assume that $\cI$ is finitely generated,
and (over $\bR$)  $\cD ^+_n$ denotes the group of
orientation preserving diffeomorphisms.
Then we have the following:

(i) any two volume forms on $\bK ^n$ (which, in the
case $\bK =\bR$, define the
same orientation in $T_0\bR ^n$)
can be joined (via pullback) by
a 1-parameter family of diffeomorphisms $\phi _t$
such that $\phi _t ^{\ast}\cI =\cI$ (i.e., by a
$(\cD _n)_{\cI}$-isotopy).

(ii) For a given volume form $\Omega _n$, let $\cD _{\Omega _n}$
be the subgroup of $\cD _n$ whose elements preserve $\Omega _n$.
Then
$\phi ^{\ast}\cI =\cJ$ for some $\phi\in\cD _n$
(for $\bK =\bC$) or some $\phi\in\cD ^+ _n$ (for $\bK =\bR$)
implies $h^{\ast}\cI =\cJ$ for some $h\in\cD _{\Omega _n}$.
\end{thm}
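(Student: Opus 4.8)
The plan is to reduce the theorem entirely to Theorem \ref{orb-wqh} by exploiting the dictionary between ideals and map-germs set up above. Since $\cI$ is w.q.h., I first fix a set of generators $g_1,\ldots ,g_p$ for which the associated map-germ $f=(g_1,\ldots ,g_p)$ is $\cVnK$-w.q.h. The key observation is that $(\cD _n)_{\cI}$ coincides \emph{as a group} (not merely at the level of Lie algebras) with the source factor $\cK _f^n$ of the stabilizer of $f$ in $\cK$: by the equivalence recalled above, $\phi ^{\ast}\cI =\cI$ holds precisely when $f=B\cdot (f\circ\phi )$ for some $B\in {\rm GL}_p(C_n)$, which is exactly the condition $\phi\in\cK _f^n$. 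Everything then follows by specializing Theorem \ref{orb-wqh} to $G=\cK$, $q=n$, $\VqG =\cVnK$.

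For part (i), I apply Theorem \ref{orb-wqh}(i) to the $\cVnK$-w.q.h. map $f$: any two volume forms on $\bK ^n$ (defining the same orientation when $\bK =\bR$) are $\cK _f^n$-isotopic, and under the identification $\cK _f^n=(\cD _n)_{\cI}$ this is precisely the asserted $(\cD _n)_{\cI}$-isotopy. The construction underlying Theorem \ref{orb-wqh}(i), via Proposition \ref{prEuler} and the generalized Euler field built from the non-negative weights $w_i$ with $\sum _i w_i>0$, is carried out in the smooth category, so the conclusion holds both for $\cO _n$ and for finitely generated ideals in $\cE _n$.

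For part (ii), suppose $\phi ^{\ast}\cI =\cJ$ with $\phi\in\cD _n$ (for $\bK =\bC$) or $\phi\in\cD ^+_n$ (for $\bK =\bR$). I seek a volume-preserving $h$ of the form $h=\psi\circ\phi$ with $\psi\in (\cD _n)_{\cI}$, since then $h^{\ast}\cI =\phi ^{\ast}(\psi ^{\ast}\cI )=\phi ^{\ast}\cI =\cJ$ automatically. It remains to arrange $h^{\ast}\Omega _n=\phi ^{\ast}(\psi ^{\ast}\Omega _n)=\Omega _n$, i.e. $\psi ^{\ast}\Omega _n=(\phi ^{-1})^{\ast}\Omega _n$. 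Now $\Omega _n$ and $(\phi ^{-1})^{\ast}\Omega _n$ are two volume forms which, over $\bR$, define the same orientation precisely because $\phi$ (hence $\phi ^{-1}$) is orientation preserving; this is where the hypothesis $\phi\in\cD ^+_n$ enters. By part (i) they are $(\cD _n)_{\cI}$-isotopic, so there is a family $\Phi _t\in (\cD _n)_{\cI}$ with $\Phi _0={\rm Id}$ and $\Phi _1^{\ast}\Omega _n=(\phi ^{-1})^{\ast}\Omega _n$ (Definition \ref{IH-diff}). Taking $\psi =\Phi _1$ gives $h=\Phi _1\circ\phi\in\cD _{\Omega _n}$ with $h^{\ast}\cI =\cJ$, as required.

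The argument is almost entirely bookkeeping once Theorem \ref{orb-wqh} is in hand, so the main point is to verify that $(\cD _n)_{\cI}$ and $\cK _f^n$ coincide on the nose (guaranteed by the displayed equivalence above) and to track carefully the directions of the pullbacks and the orientation condition in the composition $h=\psi\circ\phi$. I expect no genuine obstacle beyond this care; the only place forcing the stated hypotheses is the orientation matching of $\Omega _n$ and $(\phi ^{-1})^{\ast}\Omega _n$ over $\bR$, which dictates the restriction to $\cD ^+_n$.
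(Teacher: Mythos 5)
Your proposal is correct and follows essentially the same route as the paper: the authors present Theorem \ref{I-orb-wqh} as a direct reformulation of Theorem \ref{orb-wqh} via the identification $(\cD_n)_{\cI}=\cK_f^n$ (and $\Derlog(\cI)=L\cK_f^n$) for a w.q.h. generating map $f=(g_1,\ldots,g_p)$, which is exactly the dictionary you use. Your explicit derivation of (ii) from (i) by writing $h=\Phi_1\circ\phi$ and matching orientations of $\Omega_n$ and $(\phi^{-1})^{\ast}\Omega_n$ is the correct bookkeeping that the paper leaves implicit.
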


\begin{rem}
For $\cA$-equivalence we have the following cohomological
description. Given a map-germ $f:(\bC ^n,0)\to (\bC ^p,0)$,
let $\Delta _f$ be the discriminant (for $n\ge p$) or the image
(for $n<p$) of $f$. If $f$ satisfies the necessary and sufficient
condition (namely, GTQ for $n\ge p$ or NHS for $n<p$)
for the equality $\Derlog (\cI (\Delta _f)) ={\rm Lift}(f)$ of
Theorem 2 in \cite{BdPW} then we have the following isomorphism:
\[
\cM (\cVpA ,f)\cong H^p((\Lambda ^\ast (\cI (\Delta _f),d)),
\]
here $\cI (\Delta _f)$ is the vanishing ideal of
$\Delta _f\subset (\bC ^p,0)$. For the precise definitions
of GTQ (generically a trivial unfolding of a q.h. germ)
and NHS (no hidden singularities) we refer to \cite{BdPW}.
\end{rem}

Notice that $f$ w.q.h. (for $\cVpA$) implies that
the ideal $\cI (\Delta _f)$ is weakly quasihomogeneous.
But there are w.q.h. maps $f$ that fail to be GTQ.
We give two examples illustrating these facts.

\begin{ex}\label{neg-w}
The map $f:(\bC ^3,0)\to (\bC ^2,0)$ given by
$f(x,y,x)=(x,xy+y^5+y^7z)$ is w.q.h. with weights
$(4,1,-2)$ and weighted degrees $(4,5)$. The discriminant
of $f$ is the origin in $(\bC ^2,0)$. The critical set
is the $z$-axis, which consists of $\cA$-unstable points,
hence $f$ fails to be $\cA$-finite.
\end{ex}

\begin{ex}\label{eBdPW}
In \cite{BdPW}
$$f:(\bC^3,0)\rightarrow (\bC^2,0),~~ f(u,x,y)=(u,x^4+y^4+ux^2y^2)$$
is presented as an example of a non-GTQ map-germ.
But $f$ is
weakly quasi-homogeneous (for the weights $(0,1,1)$).
Notice that, again, $f$ fails to be $\cA$-finite.
\end{ex}

One may not care much about such degenerate examples of infinite
$\cA$-codimension.
In Section \ref{fol-A} we describe more subtle examples of weakly
quasi-homogeneous map-germs that are $\cA$-finite
and even $\cA$-simple.

Next, we will derive some finiteness results for
$H^n(\Lambda^{\ast}(\cI))$ when $\cI$ is not necessarily w.q.h.,
and apply these to deduce $\VqG$-finiteness from $G$-finiteness of
$f:(\bC ^n,0)\to (\bC ^p,0)$ (for certain $G$ and $(n,p)$). We
assume here that $\mathbb K=\mathbb C$ and that all germs (at $0$)
are $\mathbb C$-analytic. For $\cI =\langle g_1,\ldots ,g_s
\rangle$ we denote the ideal of maximal minors of the Jacobian of
$g=(g_1,\ldots ,g_s)$ (viewed as a map-germ) by $J(g)$, and we set
$\nabla g_i:=J(g_i)$. Recall that $\langle g_1,\ldots
,g_s,J(g)\rangle$ is the vanishing ideal of the set of
$\cK$-unstable points of $g$, so that (by the Nullstellensatz) $g$
is $\cK$-finite if and only if  $\cM _n^r\subset \langle
g_1,\ldots ,g_s,J(g)\rangle$, for some $r<\infty$, or iff $g$ has
(at worst) an isolated singular point at $0$. Also notice that
\[
\langle g_1,\ldots ,g_s,J(g)\rangle\subset
\langle g_1,\ldots ,g_s,\nabla g_1,\cdots,\nabla g_s\rangle
\]
implies that, for $\cK$-finite $g$, the ideal on the RHS of
this inclusion has finite colength.

We will relate the complex $\Lambda^{\ast}(\cI)$ to the following
subcomplex of the de Rham complex: $(\cA^{\ast}_0(\cI),d)$, where
$\cA^k_0(\cI)=\{\alpha+d\beta\in \Lambda^k:\alpha\in \cI\Lambda^k,
\ \beta\in \cI\Lambda^{k-1}\}$. If $\cI$ is the vanishing ideal of
a variety $V$ then this complex is called the complex of zero
algebraic restrictions to $V$ (see \cite{DJZ1}, \cite{DJZ2},
\cite{D}). The cohomology of the quotient complex
$(\Lambda^{\ast}/\cA^{\ast}_0(\cI(V)),d)$ has been studied in
detail in earlier works (see \cite{Re},\cite{B-H},\cite{B},
\cite{Se},\cite{Gr1},\cite{Gr2}). Notice that the  $k$-th
cohomology $H^k(\Lambda^{\ast}/\cA^{\ast}_0(\cI))$ of this
quotient complex and the $(k+1)$-th cohomology
$H^{k+1}(\cA^{\ast}_0(\cI))$ of the above subcomplex are related
by the map
$$
d: \frac{ \{\omega \in
\Lambda^k : d\omega \in \cA^{k+1}_0(\cI)\}}
{d\Lambda^{k-1}+\cA^{k}_0(\cI)}\longrightarrow \frac{\{\gamma\in
\cA^{k+1}_0(\cI):d\gamma=0\}}{d\cA^{k}_0(\cI)},
$$
which is an isomorphism by the exactness of the de Rham complex of
germs of differential forms on $\mathbb C^n$.

We are interested in $H^n(\cA^{\ast}_0(\cI))$. First notice
the following fact.

\begin{prop}\label{Poincare}
If an ideal $\cI$ in $\cO _n$ has generators
$g_1,\ldots,g_s$, where each $g_i$ is $\cK$-equivalent
to a $\cVnK$-w.q.h. function-germ,
then $H^n(\cA^{\ast}_0(\cI))=0$.
\end{prop}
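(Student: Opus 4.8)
The plan is to reduce the statement to a single inclusion of $\bK$-vector spaces and then establish that inclusion one generator at a time, using the flow of a generalized Euler field exactly as in the proof of Proposition \ref{prEuler}. First I would unwind the top cohomology: since $\Lambda^{n+1}=0$, every $n$-form is closed, and writing out the definitions gives
$$
H^n(\cA^{\ast}_0(\cI))=\frac{\cA^n_0(\cI)}{d\cA^{n-1}_0(\cI)}
=\frac{\cI\Lambda^n+d(\cI\Lambda^{n-1})}{d(\cI\Lambda^{n-1})},
$$
because $\cA^n_0(\cI)=\cI\Lambda^n+d(\cI\Lambda^{n-1})$ and $d\cA^{n-1}_0(\cI)=d(\cI\Lambda^{n-1})$ (the term $d(d\eta)$ vanishes). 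Hence the proposition is equivalent to the inclusion $\cI\Lambda^n\subseteq d(\cI\Lambda^{n-1})$. Writing a general element of $\cI\Lambda^n$ as $h\,\Omega_0$ with $\Omega_0=dx_1\wedge\cdots\wedge dx_n$ and $h=\sum_i a_ig_i$, and noting that $d(\cI\Lambda^{n-1})$ is closed under addition, it suffices to treat a single term $a\,g_i\,\Omega_0$ for an arbitrary function $a$ and a fixed generator $g=g_i$.

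Second, I would handle the model case. Since $g$ is $\cK$-equivalent to a w.q.h. germ $\tilde g$, after a coordinate change $\phi$ and multiplication by a unit we have $g=u\cdot(\tilde g\circ\phi)$; as $\phi^{\ast}$ is an isomorphism of the complexes $\cA^{\ast}_0$ and the unit leaves the principal ideal unchanged, I may work in the coordinates $y=\phi(x)$ with $\tilde g$ quasihomogeneous of degree $\de$ for non-negative weights $w=(w_1,\ldots,w_n)$ with $W:=\sum_iw_i>0$. For the Euler field $E_w$ and an arbitrary function $c$ one has $E_w\rfloor(c\tilde g\,\Omega_0)=c\tilde g\,(E_w\rfloor\Omega_0)$, and I apply the flow $G_t(y)=(e^{w_1t}y_1,\ldots,e^{w_nt}y_n)$, $t\le 0$, of Proposition \ref{prEuler}. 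Because $G_t^{\ast}\tilde g=e^{\de t}\tilde g$ and $G_t^{\ast}\Omega_0=e^{Wt}\Omega_0$, the pullback $G_t^{\ast}(c\tilde g\,\Omega_0)$ carries the factor $e^{(\de+W)t}$, which tends to $0$ as $t\to-\infty$ precisely because $\de+W>0$ (note $\de\ge 0$ for a q.h.\ germ with non-negative weights, while $W>0$). Integrating $(G_t^{\ast}(c\tilde g\,\Omega_0))'=d(G_t^{\ast}(E_w\rfloor(c\tilde g\,\Omega_0)))$ from $-\infty$ to $0$ then yields
$$
c\tilde g\,\Omega_0=d\big(\tilde c\,\tilde g\,(E_w\rfloor\Omega_0)\big),
\qquad \tilde c=\int_{-\infty}^0(c\circ G_t)\,e^{(\de+W)t}\,dt,
$$
where the smoothness of $\tilde c$ follows from the substitution $s=e^t$ (after scaling the weights by a large constant) exactly as in Proposition \ref{prEuler}. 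The point is that the primitive $\tilde c\,\tilde g\,(E_w\rfloor\Omega_0)$ lies in $\langle\tilde g\rangle\Lambda^{n-1}$.

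Finally, I would transfer back. Pulling the identity above back by $\phi$, which commutes with $d$, produces $d\big(\phi^{\ast}(\tilde c\,\tilde g\,(E_w\rfloor\Omega_0))\big)$ on the right, whose primitive lies in $\langle\tilde g\circ\phi\rangle\Lambda^{n-1}=\langle g\rangle\Lambda^{n-1}\subseteq\cI\Lambda^{n-1}$, and on the left a multiple of $(\tilde g\circ\phi)\,\Omega_0$ with a unit Jacobian factor $\det(d\phi)$; choosing $c$ so as to absorb this Jacobian together with $u$ and the prescribed factor $a$ (i.e.\ $(c\circ\phi)\det(d\phi)=au$) shows $a\,g\,\Omega_0\in d(\cI\Lambda^{n-1})$. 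Summing over the generators then proves $\cI\Lambda^n\subseteq d(\cI\Lambda^{n-1})$ and hence $H^n(\cA^{\ast}_0(\cI))=0$. The main obstacle, and the only place where the hypothesis is genuinely used, is forcing the primitive to have coefficients in the ideal rather than merely being smooth; this is exactly what the homotopy delivers, since it returns a form proportional to $\tilde g$. The fact that different generators may carry incompatible weight systems causes no difficulty, because the whole reduction is carried out generator by generator inside the $\bK$-subspace $d(\cI\Lambda^{n-1})$.
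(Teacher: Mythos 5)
Your proof is correct and follows essentially the same route as the paper's: reduce to the inclusion $\cI\Lambda^n\subseteq d(\cI\Lambda^{n-1})$, treat each generator separately via its $\cK$-normalization $g_i=k_i(h_i\circ\Phi_i)$, and use the Euler-flow homotopy of Proposition \ref{prEuler} to produce a primitive proportional to $h_i$ (the paper isolates this as Lemma \ref{rP}), then pull back by the diffeomorphism. Your version is merely a bit more explicit about the identification $H^n(\cA^{\ast}_0(\cI))=(\cI\Lambda^n+d(\cI\Lambda^{n-1}))/d(\cI\Lambda^{n-1})$ and about the convergence factor $e^{(\delta+W)t}$.
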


\begin{rem}
The hypothesis that each $g_i$ is $\cK$-equivalent to
some function-germ that is q.h. for non-negative weights
and total positive weight (and hence $\cVnK$-w.q.h.)
does not require that the map $g=(g_1,\ldots ,g_s)$ is
$\cVnK$-w.q.h. (the source diffeomorphisms in the $\cK$-equivalences
can be different for each $g_i$).
\end{rem}

\begin{proof} It is enough to show that any $n$-form in
$\cI\Lambda^n$  is the differential of a $(n-1)$-form in
$\cI\Lambda^{n-1}$. Let $\omega=\sum_{i=1}^s g_i \omega_i$, where
the $\omega_i$ are $n$-forms. Any $n$-form on $\mathbb C^n$ is
closed and each $g_i=k_i\Phi^{\ast}h_i$, where $k_i$ is a
non-vanishing function-germ, $\Phi_i$ is a diffeomorphism-germ and
$h_i$ is w.q.h. with non-negative weights, at least one of which
is positive. We then apply the following lemma to each
$h_i(\Phi_i^{-1})^{\ast}(k_i\omega_i)$ separately.

\begin{lem}\label{rP}
If $h$ is w.q.h. then for any $n$-form $\omega$ there exists an
$(n-1)$-form $\beta$ such that $h \omega=d(h \beta)$.
\end{lem}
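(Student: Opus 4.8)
The plan is to run the flow argument from the proof of Proposition~\ref{prEuler} on the top-degree form $h\omega$ and to extract the factor $h$ from the resulting primitive by using the quasihomogeneity of $h$. Let $w=(w_1,\ldots,w_n)$ be the weights for which $h$ is q.h., so that (after relabelling coordinates) $w_1,\ldots,w_k>0$, $w_{k+1}=\cdots=w_n=0$ and $\sum_i w_i>0$; write $E_w$ for the associated generalized Euler vector field, $G_t$ for its flow (so $G_t(x)=(e^{w_1t}x_1,\ldots,e^{w_nt}x_n)$), and $\Omega_0=dx_1\wedge\cdots\wedge dx_n$. Since $h\omega$ is an $n$-form it is closed, so exactly as in (\ref{integ}) we have $h\omega=\int_{-\infty}^0 (G_t^\ast(h\omega))'\,dt=\int_{-\infty}^0 d\bigl(G_t^\ast(E_w\rfloor h\omega)\bigr)\,dt$, provided the boundary term at $t\to-\infty$ vanishes. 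Pulling $d$ outside the integral then exhibits $h\omega$ as an exact form, and the point is that the resulting primitive is divisible by $h$.

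To see the divisibility, write $\omega=f\Omega_0$ and recall that $h$ being q.h. of weighted degree $\delta$ means $h\circ G_t=e^{\delta t}h$. Because all weights are non-negative we have $\delta\ge 0$. Using $E_w\rfloor h\omega=h(E_w\rfloor\omega)=hf\,(E_w\rfloor\Omega_0)$ together with the identity $G_t^\ast(E_w\rfloor\Omega_0)=e^{t\sum_i w_i}(E_w\rfloor\Omega_0)$ from the proof of Proposition~\ref{prEuler}, the integrand becomes
\[
G_t^\ast(E_w\rfloor h\omega)=(h\circ G_t)(f\circ G_t)\,G_t^\ast(E_w\rfloor\Omega_0)
= e^{t(\delta+\sum_i w_i)}\,h\,(f\circ G_t)\,(E_w\rfloor\Omega_0).
\]
Since $h$ does not depend on $t$ it factors out of the integral, and we obtain $h\omega=d(h\beta)$ with
\[
\beta=\Bigl(\int_{-\infty}^0 e^{t(\delta+\sum_i w_i)}\,f(G_t(x))\,dt\Bigr)\,E_w\rfloor\Omega_0,
\]
which is the assertion of the lemma once $\beta$ is shown to be a genuine (smooth) $(n-1)$-form.

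The main point to check is therefore the convergence and smoothness of the scalar coefficient $g_0(x)=\int_{-\infty}^0 e^{t(\delta+\sum_i w_i)}f(G_t(x))\,dt$; this also disposes of the boundary term, since $G_t^\ast(h\omega)=e^{t(\delta+\sum_i w_i)}h\,(f\circ G_t)\,\Omega_0\to 0$ as $t\to-\infty$. Convergence is immediate, as $f\circ G_t$ stays bounded while $\delta+\sum_i w_i\ge\sum_i w_i>0$ forces the exponential to decay as $t\to-\infty$. For smoothness I would repeat the change of variables $s=e^t$ used in Proposition~\ref{prEuler}, which turns $g_0$ into $\int_0^1 s^{\delta+\sum_i w_i-1}f(F_s(x))\,ds$ with $F_s(x)=(s^{w_1}x_1,\ldots,s^{w_n}x_n)$; after multiplying all weights (and hence $\delta$) by a sufficiently large positive integer $N$ — which preserves quasihomogeneity and all the sign conditions — the exponent $N(\delta+\sum_i w_i)-1$ exceeds $1$, and the standard estimate gives $g_0\in C_n$. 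The only genuinely new ingredient beyond Proposition~\ref{prEuler} is the clean factorization of $h$ out of the integral via $h\circ G_t=e^{\delta t}h$; everything else is a direct transcription of that proof.
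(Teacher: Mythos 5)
Your proof is correct and follows essentially the same route as the paper's: integrate $(G_t^{\ast}(h\omega))'$ along the flow of the generalized Euler vector field, use $G_t^{\ast}h=e^{\delta t}h$ to pull $h$ out of the primitive, and establish smoothness via the substitution $s=e^t$ after rescaling the weights, exactly as in Proposition~\ref{prEuler}. Your $\beta$ coincides with the paper's $\beta=\int_{-\infty}^{0}e^{\delta t}G_t^{\ast}(E_w\rfloor\omega)\,dt$ once $\omega=f\Omega_0$ is written out, so there is nothing to add.
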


\begin{proof}[Proof of Lemma \ref{rP}]
If $h$ generates the vanishing ideal of $\{h=0\}$ then this is a
corollary of the relative Poincare lemma for varieties that are
quasi-homogeneous with respect to a smooth submanifold
\cite{DJZ2}. More generally (for $\langle h\rangle$ not necessarily
radical) we use the same method as
in the proof of Proposition \ref{prEuler}.

Let $E_{w}$ be (the germ of) the Euler vector field for $h$ and
let $G_t$ be the flow of $E_w$. Then $G_t^{\ast}h=e^{\delta t} h$,
where $\delta$ is quasi-degree of $h$. By direct calculation we
obtain
\begin{equation}
\omega=\int_{-\infty}^{0}(G_t^{\ast}\omega)'dt=d(h\beta),
\end{equation}
where $\beta=\int_{-\infty}^{0}e^{\delta
t}G_t^{\ast}(E_w\rfloor\omega)dt$ is a smooth $(n-1)$-form.
\end{proof}

To conclude the proof of the proposition, we have
from Lemma \ref{rP}
$$
g_i\omega_i=\Phi_i^{\ast}(h_i(\Phi_i^{-1})^{\ast}(k_i\omega_i))=
\Phi_i^{\ast}(d(h_i\beta_i))=d(g_i\alpha_i),
$$
where $\alpha_i=\frac{1}{k_i}\Phi_i^{\ast}\beta_i$.
Hence $\omega=\sum_{i=1}^s g_i
\omega_i=d(\sum_{i=1}^sg_i\alpha_i)$, as desired.
\end{proof}

We can now relate the dimensions of $n$th cohomology
groups of the two complexes in question.

\begin{thm} \label{basic}
For $g_1,\cdots,g_s\in \cI$ we have
$$\dim H^n(\Lambda^{\ast}(\cI))\le \dim \frac{\cO_n}
{\langle g_1,\cdots,g_s,\nabla  g_1,\cdots,\nabla g_s \rangle}+
\dim H^n(\cA^\ast_0(\langle g_1,\cdots,g_s\rangle )).$$
\end{thm}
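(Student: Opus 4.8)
The plan is to compare the two complexes directly via an inclusion of subcomplexes and the resulting long exact sequence, thereby reducing the whole estimate to bounding the top cohomology of the quotient complex. First I would observe that $\cA^\ast_0(\langle g_1,\ldots,g_s\rangle)$ is a subcomplex of $\Lambda^\ast(\cI)$: since each $g_i\in\cI$, any $\alpha\in\langle g_1,\ldots,g_s\rangle\Lambda^k$ already lies in $\cI\Lambda^k$, so $d\cI\wedge\alpha\subset\cI\Lambda^{k+1}$ holds automatically (and likewise for the $\beta$-part), which is precisely the membership condition defining $\Lambda^k(\cI)$. This yields a short exact sequence of cochain complexes
\[
0\longrightarrow \cA^\ast_0(\langle g_1,\ldots,g_s\rangle)\longrightarrow \Lambda^\ast(\cI)\longrightarrow Q^\ast\longrightarrow 0,
\]
whose long exact sequence contains the three-term piece $H^n(\cA^\ast_0(\langle g_1,\ldots,g_s\rangle))\to H^n(\Lambda^\ast(\cI))\to H^n(Q^\ast)$. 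Exactness at the middle term gives the subadditivity $\dim H^n(\Lambda^\ast(\cI))\le \dim H^n(\cA^\ast_0(\langle g_1,\ldots,g_s\rangle))+\dim H^n(Q^\ast)$, so it remains only to bound $\dim H^n(Q^\ast)$ by $\dim\cO_n/\langle g_1,\ldots,g_s,\nabla g_1,\ldots,\nabla g_s\rangle$.

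To identify $H^n(Q^\ast)$ I would use that at top degree the wedge conditions are vacuous, so $\Lambda^n(\cI)=\Lambda^n$ and $Q^n=\Lambda^n/\cA^n_0(\langle g_1,\ldots,g_s\rangle)$; since $Q^{n+1}=0$, a short computation gives
\[
H^n(Q^\ast)=\Lambda^n/\bigl(d\Lambda^{n-1}(\cI)+\cA^n_0(\langle g_1,\ldots,g_s\rangle)\bigr)=\Lambda^n/\bigl(d\Lambda^{n-1}(\cI)+\langle g_1,\ldots,g_s\rangle\Lambda^n\bigr),
\]
the last step because $d(\langle g_i\rangle\Lambda^{n-1})\subset d\Lambda^{n-1}(\cI)$. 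The crux is then the explicit description of $\cA^n_0(\langle g_1,\ldots,g_s\rangle)$ inside $\Lambda^n\cong\cO_n$: differentiating a typical $\beta=\sum_{i,j}g_ih_{ij}\,dx_1\wedge\cdots\widehat{dx_j}\cdots\wedge dx_n$ produces the functions $\sum_{i,j}\pm(h_{ij}\,\partial g_i/\partial x_j+g_i\,\partial h_{ij}/\partial x_j)$, which shows that $\cA^n_0(\langle g_1,\ldots,g_s\rangle)$ is an $\cO_n$-submodule of $\cO_n$ containing every $g_i$ and every $\partial g_i/\partial x_j$. Hence it equals the ideal $\langle g_1,\ldots,g_s,\nabla g_1,\ldots,\nabla g_s\rangle$, and this is exactly where the Jacobian ideal of the bound appears.

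Finally, since $d\Lambda^{n-1}(\cI)+\cA^n_0(\langle g_1,\ldots,g_s\rangle)\supseteq\cA^n_0(\langle g_1,\ldots,g_s\rangle)\cong\langle g_1,\ldots,g_s,\nabla g_1,\ldots,\nabla g_s\rangle$, the space $H^n(Q^\ast)$ is a quotient of $\cO_n/\langle g_1,\ldots,g_s,\nabla g_1,\ldots,\nabla g_s\rangle$, whence $\dim H^n(Q^\ast)\le\dim\cO_n/\langle g_1,\ldots,g_s,\nabla g_1,\ldots,\nabla g_s\rangle$. Combining with the subadditivity inequality yields the theorem.

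The main obstacle is the module identification in the second paragraph. A priori $\cA^n_0(\langle g_i\rangle)$ is only the vector space spanned by $\langle g_i\rangle\Lambda^n$ together with differentials of forms in $\langle g_i\rangle\Lambda^{n-1}$, and one must check carefully that it is the full Jacobian-type ideal: this rests on the $\cO_n$-closure computation $\phi\,d\beta=d(\phi\beta)-d\phi\wedge\beta$ (with $d\phi\wedge\beta\in\langle g_i\rangle\Lambda^n$ when $\beta\in\langle g_i\rangle\Lambda^{n-1}$) together with extracting each $\partial g_i/\partial x_j$ from a single monomial differential. Everything else is routine homological algebra; I would note only that the dimensions involved may be infinite, in which case the stated inequality is read in the obvious way.
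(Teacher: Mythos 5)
Your proof is correct and rests on exactly the same two ingredients as the paper's: the inclusion $\cA^\ast_0(\cJ)\subset\Lambda^\ast(\cI)$ for $\cJ=\langle g_1,\ldots,g_s\rangle$ (equivalently $\cJ\Lambda^{n-1}\subset\Lambda^{n-1}(\cI)$) and the identification $\Lambda^n/\cA^n_0(\cJ)\cong\cO_n/\langle g_1,\ldots,g_s,\nabla g_1,\ldots,\nabla g_s\rangle$ together with $\cA^n_0(\cJ)/d(\cJ\Lambda^{n-1})=H^n(\cA^\ast_0(\cJ))$. The paper obtains the bound by a direct dimension count along the chain $d(\cJ\Lambda^{n-1})\subset\cA^n_0(\cJ)\subset\Lambda^n$ rather than via your short exact sequence of complexes and its long exact sequence, but this is only a repackaging of the same argument.
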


\begin{proof} For $\cJ :=\langle g_1,\cdots,g_s\rangle \subset\cI$, clearly
$\cJ\Lambda^{n-1}\subset\Lambda^{n-1}(\cI)$, which
implies that
$$\dim
H^n(\Lambda^{\ast}(\cI))=\dim \Lambda^n/d(\Lambda^{n-1}(\cI)) \le
\dim \Lambda^n/d(\cJ\Lambda^{n-1}),$$
where $\dim \Lambda^n/d(\cJ\Lambda^{n-1})=\dim
\Lambda^n/\cA_0^n(\cJ)+\dim \cA_0^n(\cJ)/d(\cJ\Lambda^{n-1})$.
Furthermore, from
$$\cA_0^n(\cJ)=\{\sum_{i=1}^s
g_i\omega_i+dg_i\wedge\sigma_i:\omega_i\in \Lambda^n, \
\sigma_i\in \Lambda^{n-1}, \ i=1,\cdots, s\}$$
we see that $\Lambda^n/\cA_0^n(\cJ)$ is isomorphic to
${\cO_n}/{\langle g_1,\cdots,g_s,\nabla g_1,\cdots,\nabla g_s\rangle }$.
Finally,
$d(\cJ\Lambda^{n-1})=d(\cA_0^{n-1}(\cJ))$ implies that
$\cA_0^n(\cJ)/d(\cJ\Lambda^{n-1})$ and $H^n(\cA _0^\ast(\cJ))$
are equal.
\end{proof}

Theorem \ref{basic} and Proposition \ref{Poincare} imply the
following corollary

\begin{cor}\label{finite-cor}
If $g_1,\cdots,g_s\in\cI$ satisfy the conditions of Proposition
\ref{Poincare} then
$$\dim H^n(\Lambda^{\ast}(\cI))\le
\dim \frac{\cO_n}{\langle \nabla g_1,\cdots,\nabla g_s\rangle }.$$
\end{cor}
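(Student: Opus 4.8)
\emph{If $g_1,\dots,g_s\in\cI$ satisfy the conditions of Proposition~\ref{Poincare} then}
\[
\dim H^n(\Lambda^{\ast}(\cI))\le
\dim \frac{\cO_n}{\langle \nabla g_1,\cdots,\nabla g_s\rangle }.
\]

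The plan is to combine Theorem~\ref{basic} with Proposition~\ref{Poincare} and then simplify the resulting bound. First I would invoke Theorem~\ref{basic} with the given generators $g_1,\dots,g_s$, obtaining
\[
\dim H^n(\Lambda^{\ast}(\cI))\le
\dim \frac{\cO_n}{\langle g_1,\dots,g_s,\nabla g_1,\dots,\nabla g_s\rangle}
+\dim H^n(\cA^\ast_0(\langle g_1,\dots,g_s\rangle)).
\]
Since by hypothesis each $g_i$ is $\cK$-equivalent to a $\cVnK$-w.q.h. function-germ, Proposition~\ref{Poincare} applies to the ideal $\cJ=\langle g_1,\dots,g_s\rangle$ and gives $H^n(\cA^\ast_0(\cJ))=0$. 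Thus the second summand vanishes, and we are left with the bound
\[
\dim H^n(\Lambda^{\ast}(\cI))\le
\dim \frac{\cO_n}{\langle g_1,\dots,g_s,\nabla g_1,\dots,\nabla g_s\rangle}.
\]

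The remaining step is to drop the generators $g_1,\dots,g_s$ from the denominator ideal without decreasing the colength, i.e. to pass from the RHS above to $\dim \cO_n/\langle \nabla g_1,\dots,\nabla g_s\rangle$. This follows from the evident inclusion of ideals
\[
\langle \nabla g_1,\dots,\nabla g_s\rangle
\subset
\langle g_1,\dots,g_s,\nabla g_1,\dots,\nabla g_s\rangle,
\]
which yields a surjection of quotient rings $\cO_n/\langle \nabla g_1,\dots,\nabla g_s\rangle \twoheadrightarrow \cO_n/\langle g_1,\dots,g_s,\nabla g_1,\dots,\nabla g_s\rangle$ and hence the inequality of dimensions in the required direction. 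Chaining the two inequalities gives exactly the claimed bound.

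I expect no genuine obstacle here: the corollary is essentially a one-line consequence of the two preceding results, and the only substantive content—the vanishing $H^n(\cA^\ast_0(\cJ))=0$—is already supplied by Proposition~\ref{Poincare}. The one point that merits a moment's care is the direction of the colength inequality when enlarging the ideal (enlarging the ideal can only shrink the quotient, hence only decrease the dimension), so that the final replacement goes the correct way; this is immediate from the surjection noted above.
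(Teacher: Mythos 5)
Your proof is correct and follows essentially the same route as the paper: apply Theorem \ref{basic} with the given generators and kill the second summand via Proposition \ref{Poincare}. The only (immaterial) difference is in the last step: the paper observes that $g_i\in\langle\nabla g_i\rangle$ by the Euler relation for w.q.h.\ germs, so the two denominator ideals actually coincide, whereas you use only the trivial inclusion and the monotonicity of colength, which suffices for the stated inequality.
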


\begin{proof} Proposition \ref{Poincare} implies that
$\dim H^n(\cA^\ast_0(\langle g_1,\cdots,g_s\rangle ))=0$, and
$g_i\in \langle \nabla g_i\rangle $
(because $g_i$ is w.q.h.).
\end{proof}

We can now deduce the following finiteness results.

\begin{thm}\label{finite1}
Let $W$ be a variety-germ with an isolated
singularity at $0$.
If the vanishing ideal of $W$  is contained in $\cI$ then $\dim
H^n(\Lambda^{\ast}(\cI))<\infty$.
\end{thm}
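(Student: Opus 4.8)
The plan is to reduce the finiteness of $H^n(\Lambda^{\ast}(\cI))$ to two separate finiteness statements via Theorem \ref{basic}, exploiting that $W$ has an isolated singularity. First I would choose generators $g_1,\ldots,g_s$ of the vanishing ideal $\cI(W)$; since $\cI(W)\subset\cI$ these lie in $\cI$, so Theorem \ref{basic} applies and gives
\[
\dim H^n(\Lambda^{\ast}(\cI))\le \dim\frac{\cO_n}{\langle g_1,\ldots,g_s,\nabla g_1,\ldots,\nabla g_s\rangle}+\dim H^n(\cA^{\ast}_0(\cI(W))),
\]
where $\langle g_1,\ldots,g_s\rangle=\cI(W)$. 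It then suffices to show that both summands on the right are finite.

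For the first summand, the plan is to show that the common zero set $Z$ of $g_1,\ldots,g_s,\nabla g_1,\ldots,\nabla g_s$ reduces to $\{0\}$. A point $x\in Z$ lies in $W=V(\cI(W))$ and has $\nabla g_i(x)=0$ for every $i$, so the Jacobian matrix of $(g_1,\ldots,g_s)$ vanishes at $x$. At any smooth point of the (proper) germ $W$ this Jacobian has positive rank, hence $x$ must be a singular point of $W$; by hypothesis the only such point is the origin, so $Z=\{0\}$. (Equivalently, this is the remark preceding the theorem: isolatedness of the singularity is the $\cK$-finiteness of $g=(g_1,\ldots,g_s)$, which makes $\langle g_1,\ldots,g_s,J(g)\rangle$ of finite colength, and $J(g)\subset\langle\nabla g_1,\ldots,\nabla g_s\rangle$.) By the Nullstellensatz $Z=\{0\}$ forces $\cM_n^r\subset\langle g_1,\ldots,g_s,\nabla g_1,\ldots,\nabla g_s\rangle$ for some $r$, so this colength is finite.

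For the second summand I would pass to the quotient complex. The de Rham exactness isomorphism displayed just before Proposition \ref{Poincare} (taken in degree $k=n-1$) identifies $H^n(\cA^{\ast}_0(\cI(W)))$ with $H^{n-1}(\Lambda^{\ast}/\cA^{\ast}_0(\cI(W)))$, the cohomology of the complex of algebraic restrictions of forms to $W$. This is where the isolated-singularity hypothesis enters a second time: for a variety germ with an isolated singular point the cohomology of its de Rham (algebraic-restriction) complex is finite-dimensional by the classical results cited above \cite{Re,B-H,B,Se,Gr1,Gr2}. Feeding both finiteness statements into the displayed inequality yields $\dim H^n(\Lambda^{\ast}(\cI))<\infty$.

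The hard part will be the second summand. The first is an elementary Nullstellensatz argument, whereas the finiteness of $H^{n-1}(\Lambda^{\ast}/\cA^{\ast}_0(\cI(W)))$ is the substantial input; one must be careful to invoke the cited finiteness results in the right generality, since here the complex is built from the radical ideal $\cI(W)$ and $W$ need be neither a hypersurface nor a complete intersection.
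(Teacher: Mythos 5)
Your proposal is correct and follows essentially the same route as the paper: apply Theorem \ref{basic} with generators of $\cI(W)$, deduce finiteness of the colength term from the isolated singularity via the $\cK$-finiteness remark preceding the theorem, and obtain finiteness of $H^n(\cA^{\ast}_0(\cI(W)))$ from the Bloom--Herrera result (which the paper cites directly, and which you reach by the same quotient-complex isomorphism the paper sets up before Proposition \ref{Poincare}). Your write-up merely makes explicit the Nullstellensatz step and the degree shift that the paper leaves implicit.
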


\begin{proof}
Let  $\cI(W)$ be generated by $g_1,\cdots,g_s$. Clearly
$g_1,\cdots,g_s\in \cI$ and from Theorem \ref{basic} we have
$$\dim H^n(\Lambda^{\ast}(\cI))\le \dim \frac{\cO_n}
{\langle g_1,\cdots,g_s,\nabla g_1,\cdots,\nabla g_s\rangle }+
\dim H^n(\cA^\ast_0(\cI(W))).$$

From the hypothesis on $W$ we then obtain the finiteness of the
dimensions on the right: $H^n(\cA^\ast_0(\cI(W)))$ is finite by a
result of Bloom and Herrera \cite{B-H} and the colength of
$\langle g_1,\cdots,g_s,\nabla g_1,\cdots,\nabla g_s\rangle$ in
$\cO _n$ is also finite for such $W$ (see our earlier remark).
\end{proof}

\begin{thm}\label{finite2}
Let $\langle g\rangle$ be the vanishing ideal of
a hypersurface having an isolated singularity at $0$.
If $g$ is contained in $\cI$  then $\dim
H^n(\Lambda^{\ast}(\cI))\leq \mu(g)$, where $\mu(g)$ is the Milnor
number of $g$.
\end{thm}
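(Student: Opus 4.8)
The plan is to feed the single element $g$ into Theorem \ref{basic}. Since $\langle g\rangle\subseteq\cI$ we have $g\in\cI$, so taking $s=1$ and $g_1=g$ gives
\[
\dim H^n(\Lambda^\ast(\cI))\le\dim\frac{\cO_n}{\langle g,\nabla g\rangle}+\dim H^n(\cA^\ast_0(\langle g\rangle)).
\]
Because $g$ has an isolated singularity the first summand is finite and equals the Tjurina number $\tau(g)=\dim\cO_n/\langle g,\nabla g\rangle$. Hence everything reduces to the single estimate $\dim H^n(\cA^\ast_0(\langle g\rangle))\le\mu(g)-\tau(g)$: adding the two bounds then produces $\tau(g)+(\mu(g)-\tau(g))=\mu(g)$, as wanted. (Equivalently, since $g\Lambda^{n-1}\subseteq\Lambda^{n-1}(\cI)$, one may bypass the split and note directly $\dim H^n(\Lambda^\ast(\cI))\le\dim\Lambda^n/d(g\Lambda^{n-1})$, so the task becomes showing this last quantity equals $\mu(g)$.)

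Next I would make $H^n(\cA^\ast_0(\langle g\rangle))$ completely explicit. Using $d(dg\wedge\tau)=-dg\wedge d\tau=d(-g\,d\tau)$, the term $d(dg\wedge\Lambda^{n-2})$ is absorbed into $d(g\Lambda^{n-1})$, so $d(\cA^{n-1}_0(\langle g\rangle))=d(g\Lambda^{n-1})$. Identifying $\Lambda^n$ with $\cO_n\,dx$ and writing an $(n-1)$-form as $X\rfloor dx$, a direct computation gives $d\big(g(X\rfloor dx)\big)=\Div(gX)\,dx$, whence $\cA^n_0(\langle g\rangle)=\langle g,\nabla g\rangle\,dx$ and
\[
H^n(\cA^\ast_0(\langle g\rangle))\cong\frac{\langle g,\nabla g\rangle}{\Div(g\,\theta_n)},\qquad\text{equivalently}\qquad\dim\frac{\Lambda^n}{d(g\Lambda^{n-1})}=\dim\frac{\cO_n}{\Div(g\,\theta_n)}.
\]
For $n\ge2$ this group is also the top Kähler--de Rham cohomology $H^{n-1}(\Lambda^\ast/\cA^\ast_0(\langle g\rangle))$ of the hypersurface $V=\{g=0\}$, via the isomorphism recorded just before Theorem \ref{basic}.

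The hard part will be the evaluation $\dim H^n(\cA^\ast_0(\langle g\rangle))=\mu(g)-\tau(g)$, equivalently $\dim\cO_n/\Div(g\,\theta_n)=\mu(g)$. This is where the isolated-singularity hypothesis is essential: it makes $\nabla g=(g_{x_1},\dots,g_{x_n})$ a regular sequence, so the Koszul complex $(\Lambda^\ast,\,dg\wedge\cdot)$ is exact with $H^n=\cO_n/\langle\nabla g\rangle$ of dimension $\mu(g)$, and it guarantees that the Brieskorn lattice $\mathcal H''=\Lambda^n/(dg\wedge d\Lambda^{n-2})$ is a free $\bC\{t\}$-module of rank $\mu(g)$, with $t$ acting as multiplication by $g$. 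From $dg\wedge d\Lambda^{n-2}\subseteq d(g\Lambda^{n-1})$ one obtains a surjection $\mathcal H''\twoheadrightarrow\cO_n/\Div(g\,\theta_n)$; identifying its kernel with the image of the twisted Gauss--Manin operator $t(\nabla_{\partial_t}+1)$ reduces the claim to a nonresonance statement that forces the cokernel to have dimension exactly $\mu(g)$. I would carry this out by invoking the de Rham theory of isolated hypersurface singularities of Brieskorn, Sebastiani, Greuel and Bloom--Herrera (\cite{B},\cite{Se},\cite{Gr1},\cite{Gr2},\cite{B-H}); Bloom--Herrera already supplies finiteness, while the precise value $\mu-\tau$ is the zero-algebraic-restriction, top-degree analogue of Varchenko's computation \cite{Va1} quoted in \S\ref{lit}.

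Finally I would sanity-check consistency. For quasihomogeneous $g$ Proposition \ref{Poincare} gives $H^n(\cA^\ast_0(\langle g\rangle))=0$, matching $\mu-\tau=0$; at the level of the explicit description this reflects $\Div(g\,\theta_n)=\langle g,\nabla g\rangle=\langle\nabla g\rangle$ (so $\dim\cO_n/\Div(g\,\theta_n)=\mu$), and the one-variable case $g=x^k$ reproduces $\dim\cO_1/\langle x^{k-1}\rangle=k-1=\mu$. The main obstacle is squarely the classical dimension count of the previous paragraph; the reduction via Theorem \ref{basic} and the explicit description of $H^n(\cA^\ast_0(\langle g\rangle))$ are essentially formal, and the inequality direction of Theorem \ref{basic} is exactly what the upper bound $\le\mu(g)$ requires.
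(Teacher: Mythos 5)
Your proposal is correct and follows essentially the same route as the paper: apply Theorem \ref{basic} with $s=1$, $g_1=g$ to get the bound $\tau(g)+\dim H^n(\cA^\ast_0(\langle g\rangle))$, and then conclude via the Brieskorn--Sebastiani formula $\dim H^n(\cA^\ast_0(\langle g\rangle))=\mu(g)-\tau(g)$, which the paper simply cites from \cite{B} and \cite{Se} rather than rederiving. Your additional explicit identification $H^n(\cA^\ast_0(\langle g\rangle))\cong\langle g,\nabla g\rangle/\Div(g\,\theta_n)$ is a correct elaboration but not needed once that classical formula is invoked.
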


\begin{proof} For $\langle g\rangle\subset\cI$ we obtain from
Theorem \ref{basic}
$$\dim H^n(\Lambda^{\ast}(\cI))\le \dim
\frac{\cO_n}{\langle g,\nabla g\rangle }+
\dim H^n(\cA^\ast_0(\langle g\rangle )).$$
The desired bound then follows from the following
formula of Brieskorn \cite{B} and Sebastiani \cite{Se}:
$\dim H^n(\cA^\ast_0(\langle g\rangle ))=\mu(g)-\tau(g)$, where
$\tau(g):=\dim {\cO_n}/{\langle g,\nabla g\rangle }$ is
the Tjurina number of $g$.
\end{proof}

\begin{rem}\label{finite3}
Theorem \ref{finite1} implies that for a finitely generated
ideal $\cI =\langle g_1,\ldots ,g_p\rangle$ corresponding to
a $\cK$-finite map $f=(g_1,\ldots ,g_p)$ the dimension of
$H^n(\Lambda^{\ast}(\cI))$ is finite dimensional.
For the ideal of an ICIS we have a more precise
bound. For a $\bC$-linear combination
$h=\sum _{i=1}^p a_ig_i$ we have
$\langle h\rangle\subset\cI$, hence
$\dim H^n(\Lambda^{\ast}(\cI))\leq \mu (h)$
(for $\mu (h)<\infty$ we apply Theorem \ref{finite2}, and
otherwise the upper bound is trivial). Furthermore,
for a generic projection $\pi :\bC ^p\to \bC$,
$(y_1,\ldots ,y_p)\mapsto \sum _{i=1}^p a_iy_i$ the
Milnor number of $h=\pi\circ g$, where $g=(g_1,\ldots ,g_p)$,
is finite (recall the usual method for calculating the
Milnor number of an ICIS).
\end{rem}

The above finiteness results can be generalized to the case of
subgroups $H$ of the group of germs of $\mathbb C$-analytic
diffeomorphisms of $\mathbb C^q$. Using the isomorphism
$\theta_q\ni X\mapsto X\rfloor \Omega \in \Lambda^q$ we can prove
in the same way the following result.

\begin{thm} Let $\cJ$ be an ideal in $\cO_q$ generated by
$g_1,\cdots,g_s$. If $\cJ \theta_q$ is contained in $LH$ then
$$\dim \frac{\cO_q}{\Div(LH)}\le \dim
\frac{\cO_q}{\langle g_1,\cdots,g_s,\nabla g_1,\cdots,\nabla
g_s\rangle } +\dim H^q(\cA^\ast_0(\langle g_1,\cdots,g_s \rangle
)).$$
\end{thm}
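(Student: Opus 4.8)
The plan is to reduce this statement to the proof of Theorem~\ref{basic} by transporting the problem from vector fields to differential forms via interior product with a fixed volume form. Fix a volume form $\Omega$ on $(\mathbb C^q,0)$. The interior product $\iota:\theta_q\to\Lambda^{q-1}$, $X\mapsto X\rfloor\Omega$, is an isomorphism of $\cO_q$-modules, and multiplication by $\Omega$ identifies $\cO_q$ with $\Lambda^q$. Under these identifications the divergence becomes the exterior derivative, since by definition $\Div(X)\,\Omega=d(X\rfloor\Omega)$; hence the subspace $\Div(LH)\subset\cO_q$ corresponds to $d(\iota(LH))\subset\Lambda^q$ and
\[
\frac{\cO_q}{\Div(LH)}\cong\frac{\Lambda^q}{d(\iota(LH))}.
\]

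Next I would exploit the hypothesis $\cJ\theta_q\subset LH$. Since $\iota$ is an $\cO_q$-module isomorphism, it carries $\cJ\theta_q$ onto $\cJ\,\iota(\theta_q)=\cJ\Lambda^{q-1}$, so $\cJ\Lambda^{q-1}\subset\iota(LH)$. Applying $d$ gives $d(\cJ\Lambda^{q-1})\subset d(\iota(LH))$, so that $\Lambda^q/d(\iota(LH))$ is a quotient of $\Lambda^q/d(\cJ\Lambda^{q-1})$. This yields
\[
\dim\frac{\cO_q}{\Div(LH)}=\dim\frac{\Lambda^q}{d(\iota(LH))}\le\dim\frac{\Lambda^q}{d(\cJ\Lambda^{q-1})}.
\]

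It then remains to bound $\dim\Lambda^q/d(\cJ\Lambda^{q-1})$, and here I would repeat the argument of Theorem~\ref{basic} verbatim with $n$ replaced by $q$. Namely, split
\[
\dim\frac{\Lambda^q}{d(\cJ\Lambda^{q-1})}=\dim\frac{\Lambda^q}{\cA^q_0(\cJ)}+\dim\frac{\cA^q_0(\cJ)}{d(\cJ\Lambda^{q-1})},
\]
identify the first summand with $\cO_q/\langle g_1,\ldots,g_s,\nabla g_1,\ldots,\nabla g_s\rangle$ using the explicit description of $\cA^q_0(\cJ)$, and recognize the second summand as $H^q(\cA^\ast_0(\cJ))$ via $d(\cJ\Lambda^{q-1})=d(\cA^{q-1}_0(\cJ))$. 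Combining these displays gives exactly the asserted inequality.

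I do not anticipate a genuine obstacle. The only points requiring care are the verification that $\iota$ is indeed a module isomorphism carrying $\cJ\theta_q$ precisely onto $\cJ\Lambda^{q-1}$ (this is what makes the inclusion $\cJ\Lambda^{q-1}\subset\iota(LH)$ available, and it is the sole place where the hypothesis $\cJ\theta_q\subset LH$ enters), and the observation that nothing in the proof of Theorem~\ref{basic} depends on the source dimension being $n$ rather than an arbitrary $q$, so the final chain of identifications transfers unchanged.
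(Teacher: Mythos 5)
Your proposal is correct and is precisely the argument the paper intends: the paper's own "proof" consists of the single remark that the result follows "in the same way" as Theorem~\ref{basic} using the isomorphism $\theta_q\ni X\mapsto X\rfloor\Omega$, and you have filled in exactly that reduction (divergence becoming $d$ under the identification, $\iota(\cJ\theta_q)=\cJ\Lambda^{q-1}$ giving the inclusion needed for the first inequality, then the same two-term decomposition of $\Lambda^q/d(\cJ\Lambda^{q-1})$). No gaps.
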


In particular we obtain the following

\begin{cor}\label{fin-A}
Consider the image $\im f$ of a complex-analytic map-germ
$f:(\mathbb C^n,0)\rightarrow (\mathbb C^p,0)$, and recall
that $L\cA _f^p=Lift (f)$.

(a) If $\im f\subset W$, for some variety-germ $W$ with an
isolated singularity at $0$, then
$\dim \cO_p/\Div(L\cA _f^p)$ is finite.

(b) If $\im f\subset g^{-1}(0)$, for some hypersurface
germ $g^{-1}(0)$ with an
isolated singularity at $0$, then
$\dim \cO_p/\Div(L\cA _f^p)\leq \mu (g)$.
\end{cor}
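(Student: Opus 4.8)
The plan is to obtain both parts as immediate applications of the theorem just stated, taken with $q=p$ and $H=\cA _f^p$ (so that $LH=L\cA _f^p={\rm Lift}(f)$), together with the finiteness inputs already used for Theorems \ref{finite1} and \ref{finite2}. The only new point is a module inclusion that lets one feed the vanishing ideal of $W$, respectively of $g^{-1}(0)$, into the theorem as the ideal $\cJ$.

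First I would record the inclusion $\cI (\im f)\,\theta _p\subset {\rm Lift}(f)$. Recall that a target vector field $\xi$ is liftable exactly when there is a source field $\eta$ with $tf(\eta )={\om}f(\xi )$, i.e. $df\cdot\eta =\xi\circ f$. If $h$ vanishes on $\im f$ then $h\circ f\equiv 0$, so for any $X\in\theta _p$ we get $(hX)\circ f=(h\circ f)(X\circ f)=0$; hence $hX$ is liftable with lift $\eta =0$, and since $h(0)=0$ we have $hX\in\cM _p\theta _p$. Thus $(0,hX)\in L\cA _f$ and its target projection $hX$ lies in $L\cA _f^p$, for every $h\in\cI (\im f)$ and every $X$, which is the claimed inclusion.

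For part (a) I would take generators $g_1,\ldots ,g_s$ of $\cI (W)$. Since $\im f\subset W$ gives $\cI (W)\subset\cI (\im f)$, the inclusion above yields $\cI (W)\,\theta _p\subset {\rm Lift}(f)=LH$, so the theorem applies with $\cJ =\cI (W)$ and gives
\[
\dim\frac{\cO _p}{\Div (L\cA _f^p)}\le
\dim\frac{\cO _p}{\langle g_1,\ldots ,g_s,\nabla g_1,\ldots ,\nabla g_s\rangle}
+\dim H^p(\cA ^\ast _0(\cI (W))).
\]
Both right-hand terms are finite when $W$ has an isolated singularity: the first because $\langle g_1,\ldots ,g_s,J(g)\rangle\subset\langle g_1,\ldots ,g_s,\nabla g_1,\ldots ,\nabla g_s\rangle$ has finite colength (as recorded before Theorem \ref{finite1}), and the second by the Bloom--Herrera result invoked in the proof of Theorem \ref{finite1}.

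For part (b) I would run the same argument with the single generator $g$, so $\langle g\rangle\,\theta _p\subset LH$ and the theorem gives
\[
\dim\frac{\cO _p}{\Div (L\cA _f^p)}\le
\dim\frac{\cO _p}{\langle g,\nabla g\rangle}+\dim H^p(\cA ^\ast _0(\langle g\rangle )).
\]
Inserting the Brieskorn--Sebastiani formula $\dim H^p(\cA ^\ast _0(\langle g\rangle ))=\mu (g)-\tau (g)$ with $\tau (g)=\dim\cO _p/\langle g,\nabla g\rangle$ collapses the bound to $\mu (g)$, exactly as in the proof of Theorem \ref{finite2}. The only step needing genuine care, and hence the main obstacle, is the liftability inclusion of the second paragraph; once it is in place the corollary is a formal consequence of the theorem and of the two cited finiteness results.
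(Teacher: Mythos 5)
Your proposal is correct and follows exactly the route the paper intends: the paper derives Corollary \ref{fin-A} as an immediate consequence of the preceding theorem (with $H=\cA _f^p$), and your inclusion $\cI (\im f)\,\theta _p\subset {\rm Lift}(f)$ (via the zero lift of $hX$ when $h\circ f\equiv 0$) is precisely the implicit step, with the two finiteness inputs (Bloom--Herrera and the Brieskorn--Sebastiani formula) used just as in Theorems \ref{finite1} and \ref{finite2}.
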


\begin{rem}\label{rem-fin-A}
Suppose that $f:(\mathbb C^n,0)\rightarrow (\mathbb C^p,0)$
is an $\cA$-finite map-germ with target dimension $p\ge 2n$.
Then $\im f$ is a variety-germ with an isolated singularity at $0$,
hence $\cA$-finiteness implies $\cVpA$-finiteness (in the
sense that the moduli-space $\cM (\cVpA ,f)$ is finite
dimensional, by the above result). This generalizes the
corresponding result in \cite{IJ} for plane curves.

Also, for map-germs
$f:(\mathbb C^n,0)\rightarrow (\mathbb C^2,0)$, $n\ge 2$,
for which $L\cA _f^p=Lift (f)$ is equal to Derlog of the
discriminant we have that the
$\cA$-finiteness of $f$ implies the $\cVpA$-finiteness
(notice, the discriminant is a curve with isolated singularities).

For $p<2n$ the image (for $n<p$) or the discriminant
(for $n\ge p\ge 3$) of an $\cA$-finite singular map-germ $f$
in general has non-isolated singularities
(except perhaps for a generalized fold map $f$).
Hence the above finiteness
result cannot be applied.
\end{rem}

\section{The foliation of $\cA$-orbits by $\cVpA$-orbits}\label{fol-A}

In this section we study the foliation of $\cA$-orbits
of map germs $f:(\bK ^n,0)\to (\bK ^p,\Omega _p,0)$
by $\cVpA$-orbits.
Our main objective here is the classification of
$\cVpA$-simple orbits inside the $\cA$-simple orbits, and
in dimensions $(n,2)$ and $(n,2n)$, $n\ge 2$, we give
explicit lists (see \S\ref{simple(n,2)} and \S\ref{simple(n,2n)}).
We also consider $\cVpA$-orbits of positive modality that
are s.q.h. but not w.q.h (see \S\ref{sqh}) and w.q.h. multigerms
(see \S\ref{multigerms}).

For the pairs $(n,p)$ for which the $\cA$-simple orbits are
known -- i.e., for $n\ge p$, $(1,p)$ any $p$, $p=2n$, $(2,3)$
(any corank) and for $(3,4)$ (of corank 1), see the references
below -- we find that:

\begin{enumerate}
\item an $\cA$-simple germ is
$\cVpA$-simple if and only if it does not lie in the closure of the
orbit of any non-weakly quasihomogeneous germ,
\item for $n<2p$ and for $p=2n$, with $n\le 3$,
an $\cA$-simple germ is
$\cVpA$-simple if and only if it does not lie in the closure of the
orbit of any non-quasihomogeneous germ.
\end{enumerate}

(The classifications of $\cA$-simple orbits can be found in
the following papers: $(n,p)=(1,2)$ \cite{B-G}, $(1,3)$ \cite{G-H},
$(1,p)$ ($p\ge 3$) \cite{ACurves}, $(n,2n)$ ($n\ge 2$) \cite{KPR},
$(2,3)$ \cite{Mond}, $(3,4)$ \cite{H-K}, $(n,2)$ ($n\ge 2$)
\cite{Ri,RiRu} and $(3,3)$ \cite{MT}. The survey in \cite{Gor}
describes the simple singularities of projections of complete
intersections, this {\em a priori} finer classification corresponds
to the $\cA$-classification for $n\ge p$.)

After explaining the techniques for verifying the above claim,
we will describe two particular cases in detail. First,
the classification of $\cVpA$-simple orbits in dimensions
$(n,2)$, $n>1$, because for $p=2$ the volume preserving
and the symplectic classifications agree. Combining this
classification with the one by Ishikawa and Janeczko \cite{IJ}
for curves (i.e., for $(1,2)$) yields all simple map-germs
into the symplectic plane. And second, the
classification of $\cVpA$-simple orbits in dimensions
$(n,2n)$, where ``non-trivial'' weakly quasihomogeneous
germs (that are
not quasihomogeneous nor ``trivially w.q.h.'') start
appearing.

Notice that the condition w.q.h. (for $\cVpA$)
in Proposition \ref{inf-wqh} and Theorem \ref{orb-wqh}
is a sufficient condition for the absence of $\cVpA$-moduli,
we do not know whether it is necessary.
However, for all $\cA$-simple germs in the dimension ranges
$(n,p)$ in which the $\cA$-simple classification is known
(see above) the condition w.q.h. is necessary and sufficient
for the absence of $\cVpA$-moduli. This obviously implies
the criterion above: an $\cA$-simple germ $f$ is $\cVpA$-simple
if and only if $f$ is only adjacent to w.q.h. germs.
All known examples of $\cA$-simple map-germs $f$ that fail
to be w.q.h. are of the form $f=f_0+h$, where
$f_0$ is quasihomogeneous, $h$ is a monomial vector of positive
filtration (weighted degree) and the restriction of
$\gamma _{f_0}:L\cA\to L\cA\cdot f$ to the filtration-0
parts (of the filtered modules in source and target)
has 1-dimensional kernel. In this situation the
coefficient of $h$ is a modulus for $\cVpA$
(see Lemma \ref{ker-gamma} below).

Consider $L\cVpA\cdot f\subset L\cA\cdot f= tf(\cM _n\cdot\theta
_n)+wf(\cM _p\cdot\theta _p)$. For the subgroup $\cVpA
=\cR\times\cVpL$ of $\cA$ we have to restrict the homomorphism
$wf:\theta _p\to\theta _f$, $wf(b)=b\circ f$ to divergence free
vector fields $b$, hence $L\cVpL \cdot f$ is no longer a
$C_p$-module. Let $\Lambda _d$ denote the $\bK$-vector space of
homogeneous divergence free vector fields in $\bK ^p$ of degree
$d$. Notice that $\Lambda _d$ is the kernel of the epimorphism
\[
\Div : (\theta _p)_{(d)}:=
\frac{\cM _p^d\cdot\theta _p}{\cM _p^{d+1}\cdot\theta _p}
\to H_{(d-1)}:= \frac{\cM _p^{d-1}}{\cM _p^d},
\]
which maps a vector field on $\bK ^p$ of degree $d$ to its
divergence. Hence
\[
\dim\Lambda _d = \dim (\theta _p)_{(d)}-\dim H_{(d-1)}=
(p-1)\left(^{p+d-1}_{\ \ \ d}\right) + \left(^{p+d-2}_{\ \ \ d
}\right).
\]
The $\dim\Lambda _d$ vector fields
\[
\prod _{l\ne i}y_l^{\al _l}\partial /\partial y_i,
~~\sum _l\al _l=d,~~i=1,\ldots ,p
\]
and (setting $h_{y_i}:=\partial h/\partial y_i$)
\[
-h_{y_j}\partial /\partial y_1+h_{y_1}\partial /\partial y_j,
~~h=\prod _l y_l^{\al _l},~~\al _1,\al
_j\ge 1,\sum _l\al _l=d+1,~~j=2,\ldots ,p
\]
are clearly linearly independent and hence form a basis for
$\Lambda _d$. The tangent space to the $\cVpL$-orbit at $f$ is then
given by $L\cVpL\cdot f=f^{\ast}\oplus _{d\ge 1}\Lambda _d$.

The criterion in the next easy lemma is sufficient for detecting
in the existing classifications of $\cA$-simple orbits those which
are foliated by an $r$-parameter family, $r\ge 1$, of
$\cVpA$-orbits.

\begin{lem}\label{ker-gamma}
Consider a map-germ $f_u:(\bK ^n,0)\to (\bK ^p,0)$ of the form
$f_u=f+u\cdot M$, where $f$ is a quasi-homogeneous germ, $u\in\bK$
and $M=X^{\al}\cdot\partial /\partial y_j\notin L\cA\cdot
f=L\cVpA\cdot f$ is a monomial vector of positive weighted degree
(with respect to the weights of $f$). Then we have the following:

(i) The coefficient $u$ is not a modulus for $\cA$-equivalence.

(ii) For a set of weights for which $f$ is weighted homogeneous,
let $(\theta _n)_0$, $(\theta _p)_0$ and $(\theta _f)_0$ denote
the filtration-0 parts of the modules of source-, target-vector
fields and vector fields along $f$, respectively. If the kernel of
the linear map
\[
\g _f:  (\theta _n)_0 \oplus (\theta _p)_0 \to (\theta _f)_0, ~~~
(a,b)\mapsto tf(a)-wf(b),
\]
of $\bK$-vector spaces is 1-dimensional then $u$ is an
$\cVpA$-modulus of $f_u$.
\end{lem}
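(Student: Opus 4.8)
The plan is to handle both parts with the same quasihomogeneous scaling: first I use it to absorb $u$ under $\cA$, and then I measure the obstruction to absorbing it under $\cVpA$ through the moduli space of Proposition \ref{Omega-moduli}.

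For (i), I would introduce the weighted scalings $\rho_s(x)=(s^{w_1}x_1,\ldots ,s^{w_n}x_n)\in\cR$ and $\sigma_s(y)=(s^{-\delta_1}y_1,\ldots ,s^{-\delta_p}y_p)\in\cL$. Quasihomogeneity of $f$ gives $\sigma_s\circ f\circ\rho_s=f$, and since $M=x^{\alpha}\partial/\partial y_j$ has weighted degree $d:=\sum_i\alpha_iw_i-\delta_j>0$, a direct substitution yields $\sigma_s\circ f_u\circ\rho_s=f_{s^du}$. Over $\bC$ the map $s\mapsto s^d$ is onto $\bC^{\ast}$, so all $f_u$ with $u\neq 0$ are $\cA$-equivalent; thus $u$ labels at most the two $\cA$-classes ``$u=0$'' and ``$u\neq 0$'' and is not a modulus (over $\bR$ one gets one or two classes according to the parity of $d$).

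For (ii), differentiating $\sigma_s\circ f_u\circ\rho_s=f_{s^du}$ at $s=1$ and using the Euler relation $\gamma_f(E_n,E_p)=0$, I would record the exact identity $\gamma_{f_u}(E_n,E_p)=tf_u(E_n)-wf_u(E_p)=ud\cdot M$. Hence for $u\neq 0$ we have $M=\gamma_{f_u}\big(\tfrac{1}{ud}E_n,\tfrac{1}{ud}E_p\big)\in L\cA\cdot f_u$, and under the isomorphism $\cM(\cVpA ,f_u)\cong C_p/\Div(L\cA_{f_u}^p)$ of Proposition \ref{Omega-moduli} (which sends $[\,tf_u(a)-wf_u(b)\,]$ to $[\Div b]$) the class of $M$ corresponds to
\[
\Big[\Div\big(\tfrac{1}{ud}E_p\big)\Big]=\frac{\sum_j\delta_j}{ud}\,[1]\in C_p/\Div(L\cA_{f_u}^p).
\]
So $u$ is an $\cVpA$-modulus precisely when $\sum_j\delta_j\neq 0$ and the constant $1$ does not lie in $\Div(L\cA_{f_u}^p)$; equivalently, no liftable vector field of $f_u$ has nonzero constant divergence.

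The core step is to extract this last statement from the $1$-dimensionality hypothesis. Suppose some $K_p\in L\cA_{f_u}^p$ had $\Div K_p$ a nonzero constant, and extend it to $(K_n,K_p)\in\ker\gamma_{f_u}$. I would filter all modules by weighted degree; because $f_u-f=uM$ has strictly positive degree, the associated graded of $\gamma_{f_u}$ is exactly $\gamma_f$, so the initial (lowest-degree) form of $(K_n,K_p)$ lies in $\ker\gamma_f$. Since $\Div$ is homogeneous of weighted degree $0$ and $\Div K_p$ is a nonzero constant, the degree-$0$ component $(K_p)_{(0)}$ is nonzero, so (when no lower-degree components intervene) the degree-$0$ part of $(K_n,K_p)$ is a degree-$0$ element of $\ker\gamma_f$, hence a nonzero multiple of the Euler pair $(E_n,E_p)$. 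Subtracting that multiple and feeding $\gamma_{f_u}(E_n,E_p)=ud\,M$ back into $\gamma_{f_u}(K_n,K_p)=0$ forces, after comparing the degree-$d$ terms, the membership $M\in\im\gamma_f=L\cA\cdot f$, contradicting $M\notin L\cA\cdot f$. Thus $1\notin\Div(L\cA_{f_u}^p)$ and $u$ is a modulus.

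The main obstacle is precisely this last step. Since $f_u$ is not quasihomogeneous, $L\cA_{f_u}^p$ is not a graded $C_p$-module, so the degree-$0$ part of a liftable field need not be liftable, and $\ker\gamma_f$ may carry nonzero components in negative weighted degrees (from source fields $x_l\partial/\partial x_i$ with $w_l<w_i$) that the degree-$0$ hypothesis does not control. The argument must therefore be organised around initial forms and the associated-graded map $\gamma_f$, peeling off kernel contributions degree by degree and tracking the accumulated corrections until the degree-$d$ comparison isolates $M$; the $1$-dimensional kernel at degree $0$ together with $M\notin\im\gamma_f$ is what makes this termination yield a genuine contradiction.
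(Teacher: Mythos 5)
Your part (i) is correct and is essentially the integrated form of the paper's argument: the paper computes $tf_u(E_w)-wf_u(E_{\de})=ruM$ (with $r$ the weighted degree of $M$) and invokes Mather's lemma, whereas you exponentiate the same Euler pair to the scalings $\rho_s,\sigma_s$ and read off $\sigma_s\circ f_u\circ\rho_s=f_{s^du}$; both show that $u\neq 0$ sweeps out at most finitely many $\cA$-orbits. Your setup for part (ii) is also faithful to the paper, merely repackaged through Proposition \ref{Omega-moduli}: the identity $\g_{f_u}(E_w,E_{\de})=ud\cdot M$ reduces the claim to showing that no liftable vector field of $f_u$ has non-zero constant divergence, which is precisely the paper's assertion that the only way to generate $M$ in $L\cA\cdot f_u$ is via a non-zero multiple of the Euler pair $(E_w,E_{\de})$, whose target half has divergence $\sum_j\de_j\neq 0$ (automatic for a genuinely quasihomogeneous $f$).

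The gap is that this last statement is never proved. You describe the filtration/initial-form argument that ought to prove it and then explicitly list the reasons it does not yet close (non-gradedness of $L\cA_{f_u}$, possible components of $\ker\g_f$ in negative weighted degree, accumulation of correction terms), without resolving them; since this is the only non-formal step in the lemma, part (ii) is not established as written. What must be shown is: if $tf_u(a)-wf_u(b)=M$ with $\Div b=0$, then decomposing $(a,b)$ into graded pieces $(a_k,b_k)$ and comparing filtration levels (a piece of filtration $k$ contributes at levels $k,k+d,k+2d,\dots$, and filtrations are bounded below, so only finitely many levels intervene), the level-$d$ equation reads $\g_f(a_d,b_d)+ud\,c\,M+(\mbox{contributions from negative levels})=M$ where $c$ is the Euler-pair coefficient of $(a_0,b_0)$; one then needs $M\notin\im\g_f$ to force $c\neq 0$, which contradicts $\Div(b_0)=0$ because $\Div$ preserves the grading. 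Handling the negative-level contributions and the fact that $(a_0,b_0)$ itself need only lie in $\ker\g_f$ modulo corrections is exactly the bookkeeping you defer. To be fair, the paper disposes of this step in a single terse sentence ("the only generator of $M$ in $L\cA\cdot f_u$ must be \dots a non-zero multiple of $(E_w,E_{\de})$"), so you have put your finger on a point the paper itself glosses over; but a proof must at least commit to and verify that claim rather than leave it as an announced obstacle.
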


\begin{proof}
Let $f$ be weighted-homogeneous for the weights $w_1,\ldots ,w_n$,
and associate to the target variables the weights
$\de _1,\ldots ,\de _p$.  Then the weighted degree of
$\partial /\partial y_i$ is $-\de _i$ so that $f$ has
filtration 0 and $M$ has filtration $r>0$.

For $\cA$-equivalence we consider the following element of
$L\cA\cdot f_u$:
$$
tf_u(\sum _{i=1}^n w_ix_i\cdot \partial /\partial x_i)-
wf_u(\sum _{j=1}^p \de _j y_j\cdot \partial /\partial y_j)=r
uM.
$$

From Mather's lemma (Lemma 3.1 in \cite{MaIV}) we conclude that
the connected components of $\bK\setminus\{ 0\}$ of the parameter
axis lie in a single $\cA$-orbit, hence $u$ is not a modulus for
$\cA$.

For the second statement we observe that $\dim\ker\g _f=1$
implies that this kernel is spanned by the pair of Euler
vector fields $(E_w,E_{\de})$ in source and target
(which is unique up to a multiplication by an
element of $\bK ^{\ast}$). And $M\notin L\cA\cdot f$ implies that
the only generator of $M$ in $L\cA\cdot f_u$ must be of the form
$tf_u(a)-wf_u(b)$ with $(a,b)$ a non-zero multiple of
$(E_w,E_{\de})$. But $E_{\de}$ has non-zero divergence, hence this
generator does not belong to $L\cVpA\cdot f_u$. Now Mather's lemma
implies that $u$ is a modulus for $\cVpA$.
\end{proof}

\subsection{$\cVpA$-simple, hence symplectically simple, maps from
$n$-space to the plane}\label{simple(n,2)}

The following classification, in combination with
Ishikawa and Janeczko's classification of plane curves \cite{IJ},
provides a complete list of simple map-germs into the plane $\bC ^2$,
up to source diffeomorphisms and target symplectomorphisms
(volume preserving diffeomorphisms of $\bC ^2$ are
symplectomorphisms).

\begin{prop}\label{class(n,2)}
Any $\cVpA$-simple map-germ $f:(\bC ^n,0)\to (\bC ^2,0)$,
$n\ge 2$, is
equivalent to one of the following normal forms
(here $Q=\sum _{i=1}^{n-2}z_i^2$ for $n>2$ and $Q=0$ for $n=2$):
$(x,y)$; $(x,y^2+Q)$; $(x,xy+y^3+Q)$;
$(x,y^3+x^ky+Q)$, $k>1$; $(x,xy+y^4+Q)$.
\end{prop}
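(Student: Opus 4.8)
The plan is to refine the known $\cA$-classification of simple map-germs $(\bC^n,0)\to(\bC^2,0)$ of Rieger \cite{Ri} and Rieger--Ruas \cite{RiRu} into the finer $\cVpA$-classification, using the criterion recorded just above: among $\cA$-simple germs in these dimensions, $\cVpA$-simplicity holds if and only if the germ is w.q.h. and lies in the closure of no non-w.q.h. orbit. Note first that since $\cVpA\subset\cA$, a neighborhood covered by finitely many $\cVpA$-orbits is covered by finitely many $\cA$-orbits, so every $\cVpA$-simple germ is $\cA$-simple; hence it is legitimate to start from the $\cA$-list. I would then reduce to $n=2$. The $\cA$-simple germs here all have corank one, so the splitting lemma lets me write $f$, after a change of source coordinates alone, as a suspension $(x,g(x,y)+Q(z))$ of a plane-to-plane germ, with $Q=\sum z_i^2$. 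Because $\cVpA=\cR\times\cVpL$ constrains only the target $\bC^2$-factor (which is untouched by the suspension) while leaving the source group $\cR=\cD_n$ unrestricted, adjoining the nondegenerate form $Q$ is harmless, and it suffices to classify for $n=2$ and append $Q$.

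Next I would run through the $\cA$-simple germs $(\bC^2,0)\to(\bC^2,0)$ and sort them into quasihomogeneous and non-quasihomogeneous ones. For each of the five normal forms in the statement I would exhibit weights showing it is quasihomogeneous, hence w.q.h. for $\cVpA$, since all weighted degrees are positive and the conditions of Definition \ref{def-wqh} hold: for instance $(x,xy+y^3)$ is homogeneous for source weights $(2,1)$ and target degrees $(2,3)$, and $(x,y^3+x^ky+Q)$, $(x,xy+y^4+Q)$ admit analogous integer weightings. By Proposition \ref{inf-wqh} we then have $\cM(\cVpA,f)=0$, and by Theorem \ref{orb-wqh}(ii) the $\cA$- and $\cVpA$-orbits of each such germ coincide over $\bC$. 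Thus each listed germ is a genuine $\cVpA$-orbit carrying no moduli, and it remains only to confirm that each is $\cVpA$-simple, i.e.\ adjacent only to w.q.h.\ germs.

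For the $\cA$-simple germs that are \emph{not} quasihomogeneous I would apply Lemma \ref{ker-gamma}. Writing such a germ as $f_0+u\cdot M$ with $f_0$ quasihomogeneous and $M$ a monomial vector of positive weighted degree, I would verify that the kernel of $\gamma_{f_0}$ on the filtration-$0$ parts is one-dimensional, spanned by the Euler pair $(E_w,E_\delta)$; the lemma then shows that $u$ is a $\cVpA$-modulus, so these germs support a one-parameter family of $\cVpA$-orbits and in particular fail to be $\cVpA$-simple. Since any germ lying in the closure of such a non-w.q.h.\ orbit inherits positive $\cVpA$-modality, the only germs that can remain $\cVpA$-simple are the w.q.h.\ (here in fact quasihomogeneous) ones adjacent to no non-w.q.h.\ orbit. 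Comparing with the adjacency diagram of the $\cA$-classification then shows these to be precisely the five families listed, giving simultaneously the $\cVpA$-simplicity of each entry and the completeness of the list.

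The main obstacle will be the bookkeeping concentrated in the middle and final steps: one must traverse the full Rieger classification, decide quasihomogeneity germ by germ, and --- most delicately --- determine the complete adjacency (closure) structure, since $\cVpA$-simplicity of a retained germ requires that \emph{none} of its neighbors be a moduli-bearing non-w.q.h.\ germ. Establishing the one-dimensionality of $\ker\gamma_{f_0}$ on filtration-$0$ parts for every non-q.h.\ germ, and confirming that the five listed families are adjacent only to quasihomogeneous germs, is where the genuine computation lies; the vanishing results of Proposition \ref{inf-wqh} and Theorem \ref{orb-wqh} reduce everything else to these finite, explicit checks.
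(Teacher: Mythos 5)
Your proposal is correct and follows essentially the same route as the paper: the listed germs are quasihomogeneous, hence w.q.h., so Proposition \ref{inf-wqh} and Theorem \ref{orb-wqh} identify their $\cA$- and $\cVpA$-orbits, while Lemma \ref{ker-gamma} produces a $\cVpA$-modulus on the non-w.q.h.\ $\cA$-simple germs, and adjacency in the classifications of \cite{Ri,RiRu} finishes the argument. The only difference is one of economy: the paper shortcuts your germ-by-germ traversal by observing that every $\cA$-simple germ outside the list is adjacent to one of just three germs, $(x,xy+y^5+y^7)$, $(x,xy^2+y^4+y^5)$, $(x^2+y^3,y^2+x^3)$, so Lemma \ref{ker-gamma} need only be checked for those three.
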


\begin{proof}
Any $\cA$-simple germ in dimensions $(n,2)$, $n\ge 2$,
which does not appear in the above list, is adjacent
to one of the following germs (for $n>2$, up to a suspension
by $Q$ defined above): $(x,xy+y^5+y^7)$,
$(x,xy^2+y^4+y^5)$ or $(x^2+y^3,y^2+x^3)$ (see the adjacency
diagrams in \cite{Ri} and \cite{RiRu}). These three germs
fail to be weakly quasihomogeneous and they satisfy
the hypotheses of Lemma \ref{ker-gamma}, hence they
have at least one modulus for $\cVpA$. In fact, the
parameter $a$ in
$(x,xy+y^5+ay^7)$, $(x,xy^2+y^4+ay^5+\ldots )$ and
$(x^2+ay^3,y^2+x^3)$ is a modulus for $\cVpA$.
\end{proof}

\subsection{$\cVpA$-simple maps from $n$-space to $2n$-space}
\label{simple(n,2n)}

In the same way we obtain the $\cVpA$-simple germs
in dimensions $(n,2n)$, $n\ge 2$ (notice that $n=1$ again
corresponds to the classification in \cite{IJ}). Except
for the appearance of a series of w.q.h. germs (see the last two
normal forms in Proposition \ref{class(n,2n)} below,
corresponding to type $22_k$ and $23$ in \cite{KPR}),
which are not q.h. nor trivially w.q.h.,
this classification follows from the classification
of $\cA$-orbits (and some information about adjacencies
between these orbits) in \cite{KPR}, using the same arguments
as in dimensions $(n,2)$. The classifications in dimensions
$(2,4)$ and $(n,2n)$, $n\ge 3$, are as follows.

\begin{prop}\label{class(2,4)}
Any $\cVpA$-simple map-germ $f:(\bC ^2,0)\to (\bC ^4,0)$ is
equivalent to one of the following normal forms:
$(x,y,0,0)$;
$(x,xy,y^2,y^{2k+1})$, $k\ge 1$;
$(x,y^2,y^3,x^ky)$, $k\ge 2$;
$(x,y^2,y^3+x^ky,x^ly)$, $l>k\ge 2$;
$(x,y^2,x^2y+ y^{2k+1},xy^3)$, $k\ge 2$;
$(x,y^2,x^2y,y^5)$;
$(x,y^2,x^3y+y^5,xy^3)$;
$(x,xy,xy^2+y^{3k+1},y^3)$, $k\ge 1$;
$(x,xy,xy^2+y^{3k+2},y^3)$, $k\ge 1$;
$(x,xy+y^{3k+2},xy^2,y^3)$, $k\ge 1$;
$(x,xy,y^3,y^4)$;
$(x,xy,y^3,y^5)$.
\end{prop}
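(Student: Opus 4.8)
The plan is to reduce everything to the $\cA$-simple classification and the adjacency diagrams in dimensions $(2,4)$ established in \cite{KPR}, running the exact argument used for Proposition \ref{class(n,2)}. First I would check that every normal form in the list is w.q.h.\ for $\cVpA$: the form $(x,y,0,0)$ has a zero component and is trivially w.q.h., while each of the remaining forms is quasihomogeneous for suitable integer weights on $x,y$ with all target degrees $\de_j\ge 0$ and $\sum_j\de_j>0$ (for the two series singled out as types $22_k$ and $23$ in \cite{KPR} only the weak, non-strict, form of quasihomogeneity is available). By Proposition \ref{inf-wqh} it then follows that $\cM(\cVpA,f)=0$ for every germ $f$ in the list, and by Theorem \ref{orb-wqh}(ii) that over $\bC$ its entire $\cA$-orbit is a single $\cVpA$-orbit. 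In particular the listed germs are pairwise $\cVpA$-inequivalent, since they are pairwise $\cA$-inequivalent.

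To upgrade ``single $\cVpA$-orbit'' to ``$\cVpA$-simple,'' I would read off from the adjacency diagrams in \cite{KPR} that each listed germ lies only in closures of $\cA$-orbits that are themselves w.q.h. A suitable finite jet-neighborhood of such an $f$ then meets only finitely many $\cA$-orbits, each of which is w.q.h.\ and hence a single $\cVpA$-orbit by Theorem \ref{orb-wqh}; thus the neighborhood meets only finitely many $\cVpA$-orbits, so the $\cVpA$-modality of $f$ is $0$.

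For the converse I would show that every $\cA$-simple germ in $(2,4)$ \emph{not} occurring in the list lies in the closure of the $\cA$-orbit of one of finitely many non-w.q.h.\ ``gateway'' germs, the analogues in these dimensions of the germs $(x,xy+y^5+y^7)$, $(x,xy^2+y^4+y^5)$, $(x^2+y^3,y^2+x^3)$ used in the $(n,2)$ case. For each gateway germ I would exhibit it in the form $f_0+u\cdot M$ with $f_0$ quasihomogeneous and $M$ a monomial vector of positive weighted degree outside $L\cA\cdot f_0$, and verify that the filtration-$0$ kernel of $\g_{f_0}$ is one-dimensional; Lemma \ref{ker-gamma} then makes $u$ a $\cVpA$-modulus. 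Consequently the single $\cA$-orbit of each gateway germ already decomposes into a one-parameter family of distinct $\cVpA$-orbits; since every jet-neighborhood of a germ lying in the closure of that $\cA$-orbit meets the orbit, no such germ can be $\cVpA$-simple, and the list is complete.

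The main obstacle is the finite but delicate bookkeeping in the last two paragraphs. On the one hand one must transcribe the $\cA$-adjacency data of \cite{KPR} accurately enough to guarantee that finitely many gateway germs dominate (i.e., have in their $\cA$-orbit closures) all the excluded $\cA$-simple germs, and that the listed germs are adjacent only to w.q.h.\ orbits. On the other hand, for each gateway germ one must choose the quasihomogeneous part $f_0$ and its weights correctly and then compute $\ker\g_{f_0}$ on the filtration-$0$ parts of the source, target and along-$f$ modules to confirm the one-dimensionality hypothesis of Lemma \ref{ker-gamma}. Keeping the weight data of the $22_k$ and $23$ series straight, so that these are recognized as w.q.h.\ (and therefore $\cVpA$-simple) rather than mistakenly split off moduli, is the point most likely to hide subtleties.
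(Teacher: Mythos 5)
Your proposal is correct and follows essentially the same route as the paper's proof, which likewise reduces to the $\cA$-classification and adjacencies of \cite{KPR}, uses quasihomogeneity plus Proposition \ref{inf-wqh} and Theorem \ref{orb-wqh} for the listed germs, and applies Lemma \ref{ker-gamma} to the non-w.q.h.\ germs dominating the excluded ones. One small correction: the types $22_k$ and $23$ do not occur in the $(2,4)$ list at all (they need at least three source variables and are $\cVpA$-simple only for $n\ge 4$), so every normal form in Proposition \ref{class(2,4)} is genuinely quasihomogeneous and no weak-only case arises here.
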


\begin{prop}\label{class(n,2n)}
Any $\cVpA$-simple map-germ $f:(\bC ^n,0)\to (\bC ^{2n},0)$,
$n\ge 3$, is
equivalent to one of the following normal forms (here
$\bx$ denotes $x_1,\ldots ,x_{n-1}$, and notice that the last
two normal forms are only $\cVpA$-simple for $n\ge 4$):\newline
$(\bx,y,0,\ldots ,0)$\newline
$(\bx ,x_1y,\ldots ,x_{n-1}y,y^2,y^{2k+1})$, $k\ge 1$\newline
$(\bx ,x_2y,\ldots ,x_{n-1}y, y^2,y^3,x_1^ky)$, $k\ge 2$\newline
$(\bx ,x_2y,\ldots ,x_{n-1}y,y^2,y^3+x_1^ky,x_1^ly)$, $l>k\ge 2$
\newline
$(\bx ,x_2y,\ldots ,x_{n-1}y,y^2,x_1^2y +y^{2k+1},x_1y^3)$, $k\ge 2$
\newline
$(\bx ,x_2y,\ldots ,x_{n-1}y,y^2,x_1^2y,y^5)$\newline
$(\bx ,x_2y,\ldots ,x_{n-1}y,y^2,x_1^3y+y^5,x_1y^3)$\newline
$(\bx ,x_3y,\ldots ,x_{n-1}y,y^2,x_1^2y,x_2^2y,y^3+x_1x_2y)$\newline
$(\bx ,x_3y,\ldots ,x_{n-1}y,y^2,x_1^2y,x_2^2y,y^3)$\newline
$(\bx ,x_3y,\ldots ,x_{n-1}y,y^2,x_1x_2y,(x_1^2+x_2^3)y,y^3+ x_2^2y)$
\newline
$(\bx ,x_3y,\ldots ,x_{n-1}y,y^2,x_1x_2y,(x_1^2+x_2^3)y,y^3+x_2^3y)$\newline
$(\bx ,x_3y,\ldots ,x_{n-1}y,y^2,x_1x_2y,(x_1^2+x_2^3)y,y^3)$ \newline
$(\bx ,\bx y,x_1y^2+y^{3k+1},y^3), k\ge 1$\newline
$(\bx ,\bx y,x_1y^2+y^{3k+2},y^3), k\ge 1$\newline
$(\bx ,x_1y+y^{3k+2},x_2y,\ldots ,x_{n-1}y,x_1y^2,y^3), k\ge 1$\newline
$(\bx ,x_1y,x_2y+y^{3k+2},x_3y,\ldots ,x_{n-1}y,x_1y^2+y^{3l+1},y^3),
l>k\ge 1$\newline
$(\bx ,x_1y,x_2y+y^{3k+2},x_3y,\ldots ,x_{n-1}y,x_1y^2+y^{3l+2},y^3),
l>k\ge 1$\newline
$(\bx ,x_1y+y^{3l+2},x_2y+y^{3k+2},x_3y,\ldots ,x_{n-1}y,x_1y^2,y^3),
l>k\ge 1$\newline
$(\bx ,\bx y,y^3,y^4)$\newline
$(\bx ,\bx y,y^3,y^5)$\newline
$(\bx ,x_1y+y^3,x_2y,\ldots ,x_{n-1}y,x_1y^2+y^{2k+1},x_2y^2+y^4)$,
for $k=2$ and $n\ge 4$\newline
$(\bx ,x_1y+y^3,x_2y,\ldots ,x_{n-1}y,x_1y^2+y^5,y^4)$,
for $n\ge 4$.
\end{prop}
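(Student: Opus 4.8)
The plan is to refine the $\cA$-classification of simple germs in dimensions $(n,2n)$, $n\ge 3$, taken from \cite{KPR} (together with its adjacency data), into a $\cVpA$-classification by means of the two tools assembled above. The organizing principle, which is claim (1) at the start of this section, is that an $\cA$-simple germ is $\cVpA$-simple exactly when it does not lie in the closure of the orbit of any germ carrying a $\cVpA$-modulus. Since in these dimensions the modulus-producing germs are precisely the non-w.q.h.\ ones, the argument splits into a positive half---every listed germ is w.q.h., hence free of $\cVpA$-moduli---and a negative half---every unlisted $\cA$-simple germ lies in the closure of the orbit of a non-w.q.h.\ germ that does carry a modulus.

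First I would go through the proposed normal forms and, for each, exhibit weights $w_i$ and weighted degrees $\de_j$ with all $\de_j\ge 0$ and $\sum_j\de_j>0$, certifying that the germ is w.q.h.\ for $\cVpA$. For the quasihomogeneous forms (positive weights and degrees) and for $(\bx,y,0,\ldots,0)$ with its vanishing components (trivially w.q.h.) this is immediate. The only genuinely new cases are the last two series (types $22_k$ and $23$ of \cite{KPR}): here I would produce explicit weight systems with all $\de_j\ge 0$ but not all source weights positive, so that these germs are w.q.h.\ for $\cVpA$ without being quasihomogeneous in the classical sense. By Proposition \ref{inf-wqh} every listed germ then satisfies $\cM(\cVpA,f)=0$, so Theorem \ref{orb-wqh}(ii) identifies its $\cVpA$-orbit with its $\cA$-orbit over $\bC$; being $\cA$-simple, it is $\cVpA$-simple and already is its own $\cVpA$-normal form.

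Next I would isolate the $\cA$-simple germs of \cite{KPR} that are not w.q.h.\ and show that each carries a $\cVpA$-modulus. Each such germ can be written $f_0+u\cdot M$ with $f_0$ quasihomogeneous and $M=X^{\al}\cdot\partial/\partial y_j$ a monomial vector of positive weighted degree outside $L\cA\cdot f_0$; the remaining task is to check, for the relevant weights, that the restriction of $\g_{f_0}$ to the filtration-$0$ parts $(\theta_n)_0\oplus(\theta_p)_0\to(\theta_f)_0$ has one-dimensional kernel, spanned by the Euler pair $(E_w,E_\de)$. Lemma \ref{ker-gamma}(ii) then makes the coefficient $u$ a $\cVpA$-modulus. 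Invoking the adjacency diagrams of \cite{KPR}, I would then verify that every $\cA$-simple germ absent from the list lies in the closure of the orbit of one of these modulus-bearing germs (exactly as in the proof of Proposition \ref{class(n,2)}); by the organizing principle such a germ has positive $\cVpA$-modality, so the list is complete. I would finish by treating the stated range restriction, confirming that the last two series are $\cVpA$-simple for $n\ge 4$ but acquire a modulus (and so must be dropped) when $n=3$.

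The main obstacle is the sheer case-by-case bookkeeping across the long \cite{KPR} list: assigning to each orbit either a correct w.q.h.\ weight system or a non-w.q.h.\ presentation $f_0+u\cdot M$ with one-dimensional filtration-$0$ kernel, and then leaning on the existing adjacency data to close the completeness argument. Two points I expect to be delicate are the explicit weights for the two new w.q.h.\ series together with the exact range $n\ge 4$ of their $\cVpA$-simplicity, and the Euler-pair kernel computations for the non-w.q.h.\ germs. The suspension structure---passing from the base dimensions $(2,4)$ and $(3,6)$ to general $(n,2n)$ by appending coordinates $x_iy$---should reduce most verifications to a handful of base cases; the one place it does not is precisely the $n$-dependence of the last two series, where the number of appended $x_iy$ components affects the filtration-$0$ kernel of $\g_{f_0}$ and hence whether $u$ is a modulus, and this is the computation I would watch most carefully.
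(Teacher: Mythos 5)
Your proposal is correct and follows essentially the same route as the paper: the authors check that every $\cA$-simple germ in \cite{KPR} other than types $22_k$ and $23$ is either quasihomogeneous or satisfies the hypotheses of Lemma \ref{ker-gamma} (hence carries a $\cVpA$-modulus, killing $\cVpA$-simplicity of everything adjacent to it), and then treat $22_k$ and $23$ separately, computing $\dim\ker\gamma_f=n-2$ so that Lemma \ref{ker-gamma}(ii) yields a modulus for $n=3$, while for $n\ge 4$ the weights $w(x_1)=w(x_2)=w(y)=0$, $w(x_i)=1$ for $i\ge 3$ make these germs w.q.h.\ and hence $\cVpA$-simple. The two points you flag as delicate (the explicit weight systems for the last two series and the $n$-dependence of the filtration-$0$ kernel) are exactly where the paper's argument concentrates.
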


\begin{proof}
Except for the germs of type $22_k$ and 23 in dimensions $(n,2n)$, $n\ge 4$
(these are the last two germs in the second list above),
all $\cA$-simple germs in \cite{KPR} are either quasihomogeneous
or they satisfy the hypotheses of Lemma \ref{ker-gamma} and
hence have at least one $\cVpA$-modulus.

Consider, then, the series $22_k$ of map germs
$(\bC ^n,0)\to (\bC ^{2n},0)$, $n\ge 3$ given by:
\[
g_k=(x_1,\ldots ,x_{n-1},x_1y+y^3,x_2y,\ldots
,x_{n-1}y,x_1y^2+y^{2k+1}, x_2y^2+y^4), k\ge 2.
\]
The germs $22_k$ are not semi-quasihomogeneous: if we write
$g_k=f+y^{2k+1}\cdot e_{2n-1}$ then the weighted homogeneous
initial part $f$ is not $\cA$-finite. For $n=3$ all the germs
$22_k$ are $\cA$-simple, for $n\ge 4$ only $22_2$ is $\cA$-simple
(the germs $22_{\ge 3}$ do  not have an $\cA$-modulus, but they
lie in the closure of non-simple $\cA$-orbits), see \cite{KPR}.

Now consider $\cVpA$-equivalence. Writing $f_u=f+u\cdot
y^{2k+1}\cdot e_{2n-1}$ we see that $\dim\ker\gamma _f=n-2$. For
$n=3$ part (ii) of Lemma \ref{ker-gamma} therefore implies that the
coefficient $u$ is an $\cVpA$-modulus. For $n\ge 4$ the germs $f_u$
are weakly quasi-homogeneous (take weights $w(x_1)=w(x_2)=w(y)=0$ and
$w(x_i)=1$, $i\ge 3$) and $\cVpA$-equivalent to $g_k$
(for $u\ne 0$).

For the germ of type $23$ the argument is the same.
\end{proof}

\subsection{Semi-quasihomogeneous, but not weakly
quasihomogeneous, singularities}\label{sqh}

Non-w.q.h. maps
have a decomposition $f=f_0+h$ with $f_0$ q.h.
and $h$ of positive degree (relative to the weights of $f_0$).
The normal space $N\cA\cdot f_0:=\cM _n\cdot\theta _{f_0}/L\cA\cdot f_0$
decomposes into a part of non-positive filtration and a part of
positive filtration, denoted by $(N\cA\cdot f_0)_+$.
Using the fact that $L\cVpA\cdot f_0=L\cA\cdot f_0$ and Mather's
lemma we obtain the following formal pre-normal form
for an element of an $\cVpA$-orbit inside $\cA\cdot f$:
\[
f'=f_0+\sum _{h_i\in B(f_0)_+} a_i h_i,
\]
where $B(f_0)_+$ denotes a base for $(N\cA\cdot f_0)_+$ as
a $\bK$-vector space.
Notice that for semi-quasihomogeneous maps $f$ the above
sum is finite (because $f_0$ is $\cA$-finite), otherwise it
is infinite.

Preliminary empirical examples indicate that in the s.q.h.
case (where $f_0$ is $\cA$-finite) the above pre-normal for
$f'$ is in fact a (formal) normal form for $\cVpA$. In this case
the coefficients $a_i$ are independent moduli for $\cVpA$
(some $a_i$ might also be moduli for $\cA$). If this observation
holds in general for s.q.h. maps in dimensions $(n,p)$, $n\ge p-1$,
(and Conjecture I in \cite{CMWA} is true) then
such maps $f$ satisfy the formula
\[
\cod (\cA _{\Omega _p,e},f)=\mu _{\Delta}(f)
\]
as pointed out in the introduction (here $\mu _\Delta$ denotes
the discriminant Milnor number (for $n\ge p$) or the
image Milnor number (for $p=n+1$)). Also notice that
for $n\ge p$ we have $\cod (\cA _{\Omega _p,e},f)\leq \mu _{\Delta}(f)$,
independent of the correctness of the above conjectures.

Let us consider some examples in dimensions $(n,2)$, $n\ge 2$.

\begin{ex} The $\cA$-simple non-w.q.h. germs in dimensions
$(n,2)$ have the following formal normal forms for $\cVpA$
(the normal forms $(*)$ are not s.q.h. and $Q$ denotes a sum
of squares in additional variables):
$(x,xy+y^5+ay^7+Q)$;
$(x,xy^2+y^5+ay^6+by^9+Q)$;
$(x,x^2y+y^4+ay^5+Q)$;
$(*)$ $(x,xy^2+y^4+\sum _{k\ge 2}a_ky^{2k+1}+Q)$;
$(*)$ $(x^2+ay^{2l+1},y^2+x^{2m+1})$, $l\ge m\ge 1$.

The first three normal forms $f$ are s.q.h. and their
$\cA _{\Omega _p,e}$-codimensions are equal to the
$\cA _e$-codimensions of their initial parts $f_0$,
and these are given by 3, 5 and 4, respectively.
And from \cite{DM} we have the formula
$\mu _\Delta (f)=\mu (\Sigma _f)+d(f)$
(relating the discriminant Milnor to the Milnor number
of the critical set and the double-fold number), which
gives for the three normal forms $3=0+3$, $5=1+4$ and
$4=2+2$, respectively.

The two series of non-s.q.h. maps $f$ (marked by $(*)$)
are GTQ in the sense of
\cite{BdPW} and the Milnor numbers of their discriminant
curves $\Delta _f$ (not to be confused with the discriminant
Milnor numbers of $f$) are $2k+7$ and $2(l+m)+3$, respectively.
These Milnor numbers are upper bounds for the
$\cVpA$-moduli space of $f$ (by Remark \ref{rem-fin-A}).
Formal calculations (at the infinitesimal level using
Mather's lemma) actually show that
$\dim \cM(\cVpA ,(x^2+y^{2l+1},y^2+x^{2m+1}))=1$,
modulo $\cM _n^{\infty}\theta _f$,
and that we can take the above (formal) normal form
for $\cVpA$-equivalence with the parameter $a$ as the
modulus. For the other
non-s.q.h. map we only know that $a_2$ is a modulus and
that we can take $a_3=a_4=0$ (provided $a_2\neq 0$),
for $a_k$, $k>4$, the corresponding calculations of
$L\cVpA\cdot f+\cM _n^{k+1}\theta _f$
seem very tedious.

Finally, a brief remark on our computation of $\mu (\Delta _f)$
for the above two series. We use the formulas
$2\delta =\mu +r-1$ (relating the $\delta$-invariant, the
number of branches $r$  and $\mu$ of a planar curve-germ)
and $\delta (\Delta _f)=c(f)+d(f)+\delta (\Sigma _f)$
(where $c(f)$ and $d(f)$ are the numbers of cusps
and double folds, respectively, in a stable perturbation
$f_t$ of $f$, hence $\delta (\Delta _{f_t})=c(f)+d(f)$).
For $f=(x,xy^2+y^4+y^{2k+1})$ we obtain
$\delta (\Delta _f)=3+k+1$ (see Table 1 in \cite{Ri}),
hence $\mu (\Delta _f)=2k+7$
(notice that the discriminant has $r=2$ branches).
This contradicts the claim in part (c) of example 1
in \cite{BdPW} that $\Delta _f$ has an $E_{6k+1}$ singularity.
\end{ex}

\begin{ex} The $\cA$-unimodal germs in dimensions
$(n,2)$ lie in the closure of the orbits of
one of the following $\cA$-unimodal
s.q.h. germs (see \cite{Ri2}, and $Q$ is again a sum of
squares in additional variables):
\[
(x,y^4+x^3y+ax^2y^2+x^3y^2+Q),~~a\neq -3/2
\]
\[
(x,xy+y^6+y^8+ay^9+Q)
\]
\[
(x,xy+y^3+ay^2z+z^3+z^5+Q).
\]
For $\cVpA$-equivalence the corresponding normal forms $f$ are:
\[
(x,y^4+x^3y+ax^2y^2+bx^3y^2+Q)
\]
\[
(x,xy+y^6+ay^8+by^9+cy^{14}+Q)
\]
\[
(x,xy+y^3+ay^2z+z^3+bz^5+Q).
\]
All $\cA$-unimodal germs therefore have $\cVpA$-modality at least
two. Also, for the above $f=f_0+h$ we again have
$\cod (\cA _{\Omega _p,e},f)=\cod (\cA _e,f_0)=\mu _{\Delta}(f)$.
\end{ex}

\subsection{Weakly quasihomogeneous multigerms}
\label{multigerms}

Before leaving the subject of $\cVpA$-classification we
make a final remark.
All the results on $\cVpA$-equivalence can be easily extended
to multigerms $f=(f^1,\ldots ,f^s):(\bK ^n,S)\to (\bK ^p,\Omega _p,0)$
at an $s$-tuple $S=\{ q^1,\ldots ,q^s\}\subset\bK ^n$ of points
in the source. Such an $f$ is $\cVpA$-w.q.h. if each
component $f^i=(f_1^i,\ldots ,f_p^i)$ is $\cVpA$-w.q.h. as a monogerm
for (possibly different) sets of weights $\{ w_1^i,\ldots ,w_n^i\}$
but of the same weighted degrees $\deg f_j^1=\ldots =\deg f_j^s=\delta _j$,
$j=1,\ldots ,p$. Also, if the above weights $w_j^i$ are positive
integers then we say that $f$ is q.h. as a multigerm.

Using Mather's \cite{MaV} characterization of $\cA$-stability
of multigerms in terms of multitransversality to $\cK$-orbits
of multigerms, it is not hard to see that all $\cA$-stable
multigerms are q.h. and hence $\cVpA$-w.q.h., which implies
that the classifications of $\cA$-stable and $\cVpA$-stable
orbits (over $\bC$) also agree for multigerms.

\section{The foliation of $\cK$-orbits by $\cVnK$- and
$\cVpK$-orbits}\label{fol-K}

In this section we consider the volume-preserving versions
of the classification of ICIS or,  in other words, of
$\cK$-finite maps $f:\bC ^n\to\bC ^p$, $n\ge p$. Recall
that all $\cK$-simple $f$ and all $f$ whose differential
has non-zero rank are w.q.h. for both $\cVnK$ and
$\cVpK$. Hence we will consider $\cK$-unimodal germs $f$
of rank 0 and concentrate on the more interesting
group $\cVnK$ (the condition w.q.h. for $\cVpK$ is weaker
than that for $\cVnK$, hence
$\dim \cM ( \cVnK ,f)=0$ implies $\dim \cM ( \cVpK ,f)=0$).
The relevant $\cK$-classifications are therefore those
in dimensions $(n,p)=(3,2)$ and $(4,2)$ (see \cite{Wa2})
and $(2,2)$ (see \cite{DG1}) and $(3,3)$ (see \cite{DG2}).
Recall that the $\cVnK$-classification of hypersurfaces
$f^{-1}(0)$ has been settled by the result of Varchenko \cite{Va1},
which gives $\dim \cM ( \cVnK ,f)=\mu (f)-\tau (f)$.

Looking at the lists in \cite{Wa2,DG1,DG2} we see
(using our results) that
a $\cK$-unimodal map-germ $f$ is w.q.h. for $\cVnK$
if and only if it is quasihomogeneous. We can therefore
state:
\begin{enumerate}
\item[(1)] A $\cK$-unimodal map-germ $f$ of rank 0
is $\cVnK$-unimodal if and only if it is q.h. and
does not lie in the closure of a non-q.h. $\cK$-orbit.
\item[(2)] For a $\cK$-unimodal map-germ $f$ of rank 0
such that $f^{-1}(0)$ defines a ICIS of positive dimension
and of codimension greater than one we have the following:
$(i)$ $f$ is q.h. if and only if $\cM (\cVnK ,f)=0$, and
$(ii)$ for $f$ non-w.q.h. the dimension of $\cM (\cVnK ,f)$ is
one or two.
\item[(3)] For map-germs $f$ of positive rank we recall
that the $\cK$- and $\cVnK$-classifications agree.
\end{enumerate}

We will now apply our finiteness results for
$\cM ( \cVnK ,f)\cong H^n(\Lambda ^*(f^*\cM _p))$
to some examples of non-q.h. (and non-w.q.h.) map-germs
$f$ from the classifications in \cite{Wa2,DG1,DG2}.
These results give upper bounds for
$\dim\cM ( \cVnK ,f)$, and for certain $f$ some of these upper
bound will coincide with the following lower bound
(which is analogous to Lemma \ref{ker-gamma} in the $\cVpA$ case).

\begin{lem}\label{ker-gamma-K}
Consider a map-germ $f_u:(\bC ^n,0)\to (\bC ^p,0)$ of the form
$f_u=f+u\cdot M$, where $f$ is a quasi-homogeneous germ, $u\in\bC$
and $M=X^{\al}\cdot\partial /\partial y_j\notin L\cK\cdot
f=L\cVnK\cdot f$ is a monomial vector of positive weighted degree
(with respect to the weights of $f$).
For a set of weights for which $f$ is weighted homogeneous,
let $(\theta _n)_0$, $(gl_p(\cO _n))_0$ and $(\theta _f)_0$ denote
the filtration-0 parts of the relevant modules.
If the kernel of the linear map
\[
\g _f:  (\theta _n)_0 \oplus (gl_p(\cO _n))_0 \to (\theta _f)_0, ~~~
(a,B)\mapsto tf(a)-B\cdot f,
\]
of $\bK$-vector spaces is 1-dimensional then $u$ is an
$\cVnK$-modulus of $f_u$. Hence the dimension of
$\cM ( \cVnK ,f_u)$ is positive.
\end{lem}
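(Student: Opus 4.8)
The plan is to follow the proof of Lemma~\ref{ker-gamma} almost verbatim, interchanging the roles of source and target and replacing the target Euler field $E_{\de}$ used there by the constant diagonal ``Euler matrix'' $B_{\de}=\mathrm{diag}(\de _1,\ldots ,\de _p)\in gl_p(\cO _n)$. The starting point is the Euler relation $tf(E_w)=B_{\de}\cdot f$ for the quasi-homogeneous germ $f$, where $E_w=\sum _{i=1}^n w_ix_i\,\partial /\partial x_i$; this exhibits $(E_w,B_{\de})$ as an element of $\ker\g _f$. Using it together with the Euler identity $dX^{\al}\cdot E_w=(\sum _i w_i\al _i)X^{\al}$ for the monomial $X^{\al}$, a direct computation gives
\[
tf_u(E_w)-B_{\de}\cdot f_u=\g _{f_u}(E_w,B_{\de})=r\,u\,M ,
\]
where $r=\sum _i w_i\al _i-\de _j>0$ is the weighted degree of $M$. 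In particular $M\in L\cK\cdot f_u$ for $u\neq 0$, so the whole statement reduces to showing that $M\notin L\cVnK\cdot f_u$.

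To prove this I would argue by filtration. Suppose, for contradiction, that $M=tf_u(a)-B\cdot f_u$ with $a$ divergence-free and $B\in gl_p(\cO _n)$. Writing $f_u=f+uX^{\al}e_j$ and expanding, the identity becomes
\[
M=\big(tf(a)-B\cdot f\big)+u\big((dX^{\al}\cdot a)e_j-X^{\al}Be_j\big) .
\]
Decompose $a=a_0+a_+$ and $B=B_0+B_+$ into filtration-$0$ and positive-filtration parts. Since $\g _f$ preserves the weighted filtration and $M$ has filtration $r>0$, the filtration-$0$ component gives $tf(a_0)-B_0\cdot f=0$, so $(a_0,B_0)\in\ker\g _f$; as this kernel is one-dimensional by hypothesis, $(a_0,B_0)=\la (E_w,B_{\de})$ for some scalar $\la$. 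At filtration $r$ the perturbation term contributes only through $(a_0,B_0)$ and equals $u\la r\,M$, while the filtration-$r$ part of $tf(a)-B\cdot f$ lies in $L\cK\cdot f$; hence $(1-u\la r)M\in L\cK\cdot f$. Because $M\notin L\cK\cdot f$, this forces $\la =1/(ur)\neq 0$. But then the lowest-order term of $\Div (a)$ is the nonzero constant $\la\sum _i w_i$ (a quasi-homogeneous germ has $\sum _i w_i>0$), contradicting $\Div (a)=0$. Thus $M\notin L\cVnK\cdot f_u$. Consequently $[M]\neq 0$ in $\cM (\cVnK ,f_u)=L\cK\cdot f_u/L\cVnK\cdot f_u$, so $\dim\cM (\cVnK ,f_u)\ge 1$; and Mather's lemma (Lemma 3.1 in \cite{MaIV}), applied to the transversality of the parameter line $f_u$ to the $\cVnK$-orbits, shows that $u$ is genuinely an $\cVnK$-modulus.

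I expect the filtration bookkeeping in the second paragraph to be the main obstacle. One must check that $\Div$ preserves the weighted filtration, so that the constant $\la\sum _i w_i$ really is the lowest-order term of $\Div (a)$ and cannot be cancelled by the contributions of $a_+$; and one must verify that at filtration $r$ no datum other than the kernel direction $(E_w,B_{\de})$ can reproduce $M$, which is precisely where the one-dimensionality of $\ker\g _f$ is indispensable. Once these points are secured, the nonzero divergence carried by $E_w$---in contrast to the divergence-free fields allowed in $L\cVnK$---closes the argument, exactly mirroring the role played by the nonzero divergence of $E_{\de}$ in Lemma~\ref{ker-gamma}.
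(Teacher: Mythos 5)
Your proof is correct and follows the same route the paper intends: the paper gives no separate argument for this lemma, merely declaring it analogous to Lemma~\ref{ker-gamma}, whose proof is exactly your combination of the Euler relation $\g_{f_u}(E_w,B_{\de})=ruM$, the one-dimensionality of the filtration-0 kernel forcing any generator of $M$ in $L\cK\cdot f_u$ to involve a nonzero multiple of $(E_w,B_{\de})$, and the nonzero divergence $\sum_i w_i>0$ of $E_w$ obstructing membership in $L\cVnK\cdot f_u$. Your filtration bookkeeping in the second paragraph just makes explicit what the paper asserts tersely ("the only generator of $M$ ... must be ... a non-zero multiple of" the Euler pair), so there is nothing genuinely different here.
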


In our first example we consider positive dimensional
complete intersections, defined by $\cK$-finite maps $f$,
that are not hypersurfaces. In all our examples we have
(for positive dimensional) ICIS that
$\dim\cM (\cVnK ,f)\leq \mu (f) -\tau (f)$, and for
a s.q.h. germ $f=f_0+h$ this inequality holds in general.
(For such $f=f_0+h$ we have
$\dim\cM (\cVnK ,f)=\dim\cM (\cK _{\Omega _{n},e},f)
\leq \tau (f_0)-\tau (f)$ and
$\tau (f_0)=\mu (f_0)=\mu (f)$.)
The germs $f$ in the example (which are not s.q.h.) show
that this inequality can be strict.

\begin{ex}\label{FW}
Consider the $\cK$-unimodal space-curves $FW_{1,i}$
from \cite{Wa2}, given by
\[
f=(g_1,g_2)=(xy+z^3,xz+y^2z^2+y^{5+i}), i>0.
\]
Writing $f=f_0+(0,y^{5+i})$, where $f_0$ is q.h. for
$w=(7,2,3)$, $\delta =(9,10)$ and where $(0,y^{5+i})$
has filtration $2i>0$, and applying Lemma \ref{ker-gamma-K}
we have $\dim\cM ( \cVnK ,f)\ge 1$.
The component functions of $f$ are both q.h. (for weights
$w_1=(1,2,1)$ and $w_2=(7+i,2,3+i)$, respectively)
and $\cO _3/\langle \nabla g_1,\nabla g_2\rangle\cong\bC$,
hence $\dim\cM ( \cVnK ,f)\le 1$ by Corollary \ref{finite-cor}.
Therefore $\dim\cM ( \cVnK ,f)= 1$ and the family
$f_a=(xy+z^3,xz+y^2z^2+ay^{5+i})$ parameterizes
the $\cVnK$-orbits inside $\cK\cdot f$.

A weaker upper bound for $\dim\cM ( \cVnK ,f)$ follows
from Remark \ref{finite3} (which does not require that
the component functions are w.q.h.):
take a projection $\pi$ onto the first target coordinate,
then $\pi\circ f=g_1$ and $\dim\cM ( \cVnK ,f)\leq\mu (g_1)=2$.

Finally, notice that $\dim\cM ( \cVnK ,f)=1$ is
smaller than the difference of $\mu (f)=16+i$ and $\tau (f)=14+i$,
where $\tau (f)$ denotes the dimension of $T^1_f=N\cK _e\cdot f$.
Recall that for hypersurfaces $h^{-1}(0)$ we
have $\dim\cM ( \cVnK ,h)=\mu (h)-\tau (h)$ (by \cite{Va1}),
in all our examples of higher codimensional ICIS $g^{-1}(0)$
we have $\dim\cM ( \cVnK ,g)\leq \mu (g)-\tau (g)$. Notice
that for a ``suspension'' $G=(z,g)$ of $g$ ($z$ an extra variable)
$\mu (G)-\tau (G)=\mu (g)-\tau (g)$, but $dG(0)$ has positive
rank hence $\dim\cM ( \cVnK ,G)=0$, the difference
between both sides of the inequality above can therefore
be arbitrarily large. But the examples $f=FW_{1,i}$ show
that even in the rank 0 case the Varchenko formula does not
hold for ICIS of codimension greater than 1.

Also notice that the series $FW_{1,i}$, $i>0$, lies in
the closure of the $\cK$-orbit of the s.q.h. germ
\[
g_\lambda =(xy+z^3,xz+y^2z^2+\lambda y^{5}+y^6), \lambda\neq 0,-1/4,
\]
where $\mu (g_\lambda )=16$ and $\tau (g_\lambda )=15$.
Omitting the higher filtration $y^6$-term we obtain
type $FW_{1,0}$ in Wall's list \cite{Wa2}, which is q.h.
and $\mu (FW_{1,0})=\tau (FW_{1,0})=16$. Notice
that $FW_{1,1}$ (with $\mu (F_{1,1})=17$ and $\tau (F_{1,1})=15$)
corresponds to the exceptional parameter $\lambda =0$ in
the modular stratum
$\bigcup _{\lambda\in\bC\setminus\{ 0,-1/4\}} \cK\cdot g_\lambda$
(which seems to be missing in Wall's list) and does not
lie in the closure of the orbit of $FW_{1,0}$.
\end{ex}

\begin{ex}\label{ex(3,3)}
Consider the $\cK$-unimodal equidimensional maps of type
$h_{\lambda ,q}$ from \cite{DG2}, given by
\[
f=f^{\lambda}:=(xz+xy^2+y^3,yz,x^2+y^3+\lambda z^q)=
f_0+(0,0,y^3+\lambda z^q), q>2.
\]
The initial part $f_0$ is q.h. of type $w=(1,1,2)$, $\delta =(3,3,2)$
and $fil(0,0,y^3)=1$, $fil(0,0,z^q)=2(q-1)>1$. Applying
Lemma \ref{ker-gamma-K} to $f'=f_0+(0,0,ay^3)$ we see that
$a$ is a $\cVnK$-modulus of $f'$ and hence of $f$,
hence $\dim\cM ( \cVnK ,f)\ge 1$. The component functions
of $f=(g_1,g_2,g_3)$ are q.h. for distinct sets of weights, namely
for $w_1=(1,1,2)$, any $w_2$ and $w_3=(3q,2q,6)$.
Now $\cO _3/\langle \nabla g_1,\nabla g_2,\nabla g_3\rangle\cong\bC$,
so that $\dim\cM ( \cVnK ,f)=1$ (by Corollary \ref{finite-cor}
and the above lower bound -- the upper bound also follows
from $\mu (g_1+g_2+g_3) =1$, by Remark \ref{finite3}).
And for each $f^{\lambda}=(xz+xy^2+y^3,yz,x^2+y^3+\lambda z^q)$,
$\lambda\in\bC$, the family
$f^{\lambda}_a=(xz+xy^2+y^3,yz,x^2+ay^3+\lambda z^q)$ parameterizes
the $\cVnK$-orbits inside $\cK\cdot f^{\lambda}$.
\end{ex}

\begin{ex}\label{ex(2,2)}
Finally, consider the $\cK$-unimodal equidimensional maps of type
$G_{k,l,m}$ from \cite{DG1}, given by
\[
f=(g_1,g_2)=(x^2+y^k,xy^l+y^m)=f_0+(0,y^m),
\]
where $k\neq 2(m-l)$ and either $k\leq l$, $l+1<m<l+k-1$
(case $(a)$) or $l<k<2l-1$, $k<m<2l$ (case $(b)$).
As above we check that the coefficient of $(0,y^m)$
is a $\cVnK$-modulus, hence $\dim\cM ( \cVnK ,f)\ge 1$.
And again the $g_i$ are q.h. for distinct sets of weights,
but now
$\cO _2/\langle \nabla g_1,\nabla g_2\rangle\cong
\bC\{ 1,y,\ldots ,y^r\}$,
where $r=k-1$ in case $(a)$ and $r=l$ in case $(b)$.
Hence $1\leq \dim\cM ( \cVnK ,f)\leq r$.

We can also obtain an upper bound using Remark \ref{finite3}:
take the generic projection $\pi$ onto the first target coordinate,
then $g_1=\pi\circ f$ and $\dim\cM ( \cVnK ,f)\leq\mu (g_1)=k-1$.
This gives the same upper bound in case $(a)$, but in case
$(b)$ we have $l\leq k-1$.
\end{ex}

\section{The groups $\VqG \neq \cVpA$, $\cVnK$, $\cVpK$: examples of
$G$-stable maps $f$ of positive and infinite $\VqG$-modality}

In this final section we make some remarks on the
remaining volume preserving subgroups $\VqG$ of $\cA$ or $\cK$.
First of all we remark that placing volume forms both
in the source and the target of a map $f$ leads to moduli
even for invertible linear maps $f:\bC ^n\to\bC ^n$
(the modulus being the determinant of $f$).

For function-germs the only relevant groups are those
with a volume form to be preserved in the source,
and what is known for these had been described in
Section \ref{lit}.

For map-germs $\cR$-equivalence is too fine already in the
absence of a volume form, hence the remaining cases of
interest (not considered in the previous sections) are
the groups $\cVnA$, $\cVpL$ and $\cVpC$ for pairs of
dimensions $(n,p)$, $p>1$, for which singular
$G$-finite ($G=\cA$, $\cL$ or $\cC$) map-germs $f$
exist. And we can also discard those map-germs $f$
that are trivially w.q.h. for the relevant group.

The following (in some sense ``simplest'' singular but
non-w.q.h.) examples indicate that for the above three
groups we immediately obtain moduli.

\begin{ex}
For $\cVnA$ the fold map $f=(x,y^2)$
has infinite modality. We have
\[
L\cA _f^n=\bK \{ (x^ly^{2k},0), (0,x^ly^{2k+1}); l,k\ge 0, l+k\ge 1\},
\]
where the elements of $L\cA _f^n$ are also known as lowerable
vector fields (we write these source vector fields
as vectors). It follows that dimension of $C_n/\Div (L\cA _f^n)$,
which is a lower bound for the number of $\cVnA$-moduli,
is infinite for the fold $f$.
\end{ex}

\begin{ex}
For $\cVpL$ perhaps the first interesting example
of a singular germ that fails to be trivially w.q.h.
is the planar cusp $f=(x^2,x^3)$. We claim that in this case
$C_p/\Div (L\cL _f^p)\cong\bK\{ 1,y_1\}$, hence
the $\cVpL$-modality of $f$ is two ($f$ is $\cL$-simple
and the dimension of the $\cVpL$-moduli space is two).

Taking coordinates $(y_1,y_2)$ in the target, we see
that the kernel of
\[
L\cL\longrightarrow \cM _n\theta _f,~~~u\mapsto u\circ f
\]
is (as a $\bK$-vector space) generated by elements
$u^i_{rsl}:=y_1^ry_2^s(y_2^{2l}-y_1^{3l})\partial /\partial y_i$,
where $i=1,2$, $r,s\ge 0$ and $l\ge 1$.
Set $G^i_{rsl}:=\Div (u^i_{rsl})$, then
\[
(2l+s+1)G^1_{r+1,s,l}-(r+1)G^2_{r,s+1,l}=cy_1^{3l+r}y_2^s
\]
and
\[
(s+1)G^1_{r+1,s,l}-(3l+r+1)G^2_{r,s+1,l}=cy_1^ry_2^{2l+s}
\]
where $c=-6l^2-2l(r+1)-3l(s+1)\neq 0$. Finally, we have
$G^1_{001}=-3y_1^2$, $G^2_{001}=2y_2$, $G^1_{011}=-3y_1^2y_2$
and $G^2_{101}=2y_1y_2$, and the claim follows.
\end{ex}

\begin{ex}
For $\cVpC$ we first remark that $\cC$-finite germs $f$
can only appear for $n\le p$. As an example for a  singular
germ $f$, which fails to be trivially w.q.h., we can consider
the fold $f=(x,y^2)$. A quick calculation yields
$C_n/{\Div (L\cC _f^p)}\cong\bK\{ 1,y\}$. Hence $f$
has two $\cVpC$-moduli, which can also be checked by
comparing the normal spaces for $\cC$ and $\cVpC$.
Notice that $N\cC\cdot f$ is spanned by $(0,y)$ and $(y,0)$,
whereas $N\cVpC\cdot f$ is spanned by these two elements
together with $(x,0)$ and $(xy,0)$.
\end{ex}

{\bf Acknowledgements.} The authors wish to express their thanks to
Goo Ishikawa, Stanislaw Janeczko, Andrew du Plessis,
Maria Ruas and Michail Zhitomirskii for helpful conversations
about this work.

\end{document}